\definecolor{red}{rgb}{1,0,0}
\newtheorem{thm}{Theorem}[section]
\newtheorem{cor}[thm]{Corollary}
\newtheorem{lem}[thm]{Lemma}
\newtheorem{prop}[thm]{Proposition}
\newtheorem{obs}[thm]{Observation}
\theoremstyle{definition}
\newtheorem{defn}[thm]{Definition}
\theoremstyle{example}
\newtheorem{ex}[thm]{Example}
\DeclareMathOperator{\pr}{pr}
\DeclareMathOperator{\epr}{epr}
\DeclareMathOperator{\rank}{rank}
\DeclareMathOperator{\diag}{diag}
\newcommand{\R}{\mathbb{R}}
\newcommand{\F}{\mathbb{F}}
\newcommand{\Fnn}{\F^{n\times n}}
\newcommand{\Rnn}{\R^{n\times n}}
\newcommand{\bit}{\begin{itemize}}
\newcommand{\eit}{\end{itemize}}
\newcommand{\ben}{\begin{enumerate}}
\newcommand{\een}{\end{enumerate}}
\newcommand{\beq}{\begin{equation}}
\newcommand{\eeq}{\end{equation}}
\newcommand{\bea}{\begin{eqnarray*}}
\newcommand{\eea}{\end{eqnarray*}}
\newcommand{\bpf}{\begin{proof}}
\newcommand{\epf}{\end{proof}\ms}
\newcommand{\bmt}{\begin{bmatrix}}
\newcommand{\emt}{\end{bmatrix}}
\newcommand{\ms}{\medskip}
\newcommand{\ba}{\begin{array}}
\newcommand{\ea}{\end{array}}
\newcommand{\OL}{\overline}
\newcommand{\Ftnn}{\mathbb{F}_3^{n\times n}}
\begin{document}
\title{Combinatorial properties of the enhanced principal rank characteristic sequence over finite fields}

\author{
Peter J. Dukes\thanks{Department of Mathematics and Statistics,
University of Victoria,  Victoria, BC, V8W 2Y2, Canada (dukes@uvic.ca, martinez.rivera.xavier@gmail.com).}
\and
Xavier Mart\'inez-Rivera\footnotemark[1]
}


\maketitle

\begin{abstract}
The enhanced principal rank characteristic sequence (epr-sequence) of a symmetric matrix $B \in \mathbb{F}^{n \times n}$ is defined as $\ell_1 \ell_2 \cdots \ell_n$, where $\ell_j \in \{\tt{A}, \tt{S}, \tt{N}\}$ according to whether all, some but not all, 
or none of the principal minors of order $j$ of 
$B$ are nonzero. 
Building upon the second author's recent classification of the epr-sequences of symmetric matrices over the field $\mathbb{F}=\mathbb{F}_2$, we initiate a study of the case $\mathbb{F}=\mathbb{F}_3$.   
Moreover, epr-sequences over finite fields are shown to have connections to Ramsey theory and coding theory.
\end{abstract}

\noindent{\bf Keywords.}
Principal minor;
symmetric matrix;
finite field;
enhanced principal rank characteristic sequence;
rank.

\medskip

\noindent{\bf AMS subject classifications.}
15B33, 15B57, 15A15,  15A03, 94B05.

\section{Introduction}\label{s: intro}
$\null$
\indent
For a given positive integer $n$, $[n]:=\{1,2, \dots,n\}$.
Let $\F$ be a field, $B \in \Fnn$ 
and $\alpha, \beta \subseteq [n]$; 
then $B$ has {\em order} $n$;
$B[\alpha, \beta]$ denotes the 
submatrix of $B$ lying in rows indexed by 
$\alpha$ and columns indexed by $\beta$;
$B[\alpha, \alpha]:= B[\alpha]$ and is called a
\textit{principal} submatrix;
the determinant of a 
$k \times k$ principal submatrix of $B$ is a
\textit{principal} minor, 
and such a minor has \textit{order} $k$.
With $\F_q$, where $q$ is a prime power, 
we denote the finite field with $q$ elements.

For a given symmetric matrix $B \in \Rnn$ and  
a fixed $k \in [n]$, Brualdi et al.\ \cite{P} introduced the principal rank characteristic sequence (pr-sequence) of $B$, in which they recorded the existence of at least one (respectively, the nonexistence  of any)  nonzero principal minor of order $k$ with a 1 (respectively, 0) in position $k$;
their definition of the pr-sequence was later extended to other fields by Barrett et al.\ \cite{BIRS13}:
Let $\F$ be a field and $B \in \Fnn$ be symmetric;
\textit{the principal rank characteristic sequence}
(\textit{pr-sequence}) of $B$ is defined as
$\pr(B) = r_0r_1 \cdots r_n$, where, 
for $k = 1,2,\dots,n$,
   \begin{equation*}
      r_k =
         \begin{cases}
             1 &\text{if $B$ has a nonzero principal minor of order $k$; and}\\
             0 &\text{otherwise;}
         \end{cases}
   \end{equation*}
while $r_0 = 1$ if and only if $B$ has a
$0$ diagonal entry.
As a simplification of the 
principal minor assignment problem (which is stated in \cite{HS}), 
Brualdi et al.\ \cite{P} studied what sequences of 0s and 1s are attained by symmetric matrices over the real field, $\R$, among other matrices.
The study of pr-sequences was extended by 
Barrett et al.\ \cite{BIRS13} to symmetric matrices over other fields, with their focus largely on fields of characteristic $2$.
Motivated by the pr-sequence,
and to cast more light on the existence or nonexistence of a nonzero principal minor of a given order in symmetric (and complex Hermitian) matrices,
Butler et al.\ \cite{EPR} introduced another sequence:
\begin{defn}
{\rm \cite[Definition 1.1]{EPR}}
{\rm
Let $\F$ be a field.
The {\em enhanced principal rank characteristic sequence} of $B \in \Fnn$   ({\em epr-sequence}) is defined as
$\epr(B)=\ell_1\ell_2 \cdots \ell_n$, where
\begin{equation*}
   \ell_j =
    \begin{cases}
             \tt{A} &\text{if \emph{all} of the principal minors of order $j$ are nonzero;}\\
             \tt{S} &\text{if \emph{some} but not all of the principal minors of order $j$ are nonzero;}\\
             \tt{N} &\text{if \emph{none} of the principal minors of order $j$ are nonzero (i.e., all are zero).}
         \end{cases}
\end{equation*}
}
\end{defn} 
Results concerning symmetric matrices over various fields, including constructions of matrices attaining certain epr-sequences, as well as results stating that certain subsequences cannot occur in the epr-sequence of a symmetric matrix, were presented in \cite{EPR}.
In \cite{XMR-Char 2}, Mart\'{i}nez-Rivera established the following (complete) characterization of the epr-sequences of symmetric matrices over the finite field $\F = \F_2$,  where, 
for a given sequence
$t_{i_1}t_{i_2} \cdots t_{i_k}$ from $\{\tt A,N,S\}$,
$\overline{t_{i_1}t_{i_2} \cdots t_{i_k}}$ indicates that
$t_{i_1}t_{i_2} \cdots t_{i_k}$ is repeated as many times as desired, or omitted entirely
(i.e., $\overline{t_{i_1}t_{i_2} \cdots t_{i_k}}$ may be vacuous):

\begin{thm}\label{F_2 theorem}
{\rm \cite[Theorems 3.2, 3.8, 3.11]{XMR-Char 2}}
Let $\sigma =\ell_1\ell_2 \cdots \ell_n$ be a
sequence from $\{\tt A,N,S\}$.
Then there exists a symmetric matrix $B \in \F_2^{n \times n}$ with
$\epr(B) = \sigma$ if and only if $\sigma$ is of
one of the following forms:
 \begin{multicols}{3}
    \begin{itemize}
\item[1.] $\tt A\OL{A}$\label{A1};
\item[2.] $\tt A\OL{S}N\OL{N}$\label{A2};
\item[3.] $\tt ASS\OL{S}A$\label{A3};
\item[4.] $\tt ASS\OL{S}AA$\label{A4};
\item[5.] $\tt ASSS\OL{S}AN$ \text{with $n$ even}\label{A5};
\item[6.] $\tt ASA\OL{SA}$\label{A6};
\item[7.] $\tt ASA\OL{SA}A$\label{A7};
\item[8.] $\tt ASA\OL{SA}N$\label{A8};
\item[9.] $\tt NA\OL{NA}$;
\item[10.] $\tt NA\OL{NA}N$;
\item[11.] $\tt \OL{NS}N\OL{N}$;
\item[12.] $\tt NS\OL{NS}NA$;
\item[13.] $\tt NS\OL{NS}NAN$;
\item[14.] $\tt S\OL{S}N\OL{N}$\label{S1};
\item[15.] $\tt S\OL{S}A$\label{S2};
\item[16.] $\tt S\OL{S}AA$\label{S3};
\item[17.] $\tt SS\OL{S}AN$\label{S4};
\item[18.] $\tt SASA\OL{SA}$\label{S5};
\item[19.] $\tt SASA\OL{SA}A$\label{S6};
\item[20.] $\tt SA\OL{SA}N$\label{S7}.
\item[\vspace{\fill}]
    \end{itemize}
    \end{multicols}
\end{thm}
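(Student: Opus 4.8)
The plan is to prove both implications of the characterization separately: \emph{sufficiency}, that each of the twenty templates is realized by some symmetric $B \in \F_2^{n\times n}$, and \emph{necessity}, that every such $B$ has an epr-sequence matching one template. Throughout, the organizing principle is the value of $\ell_1$, which is governed entirely by the diagonal of $B$: one has $\ell_1 = \mathtt{A}$ exactly when every diagonal entry is nonzero, $\ell_1 = \mathtt{S}$ when some but not all diagonal entries are nonzero, and $\ell_1 = \mathtt{N}$ when the diagonal vanishes identically. The last case is special to characteristic $2$: a symmetric matrix over $\F_2$ with zero diagonal is \emph{alternating}, so every odd-order principal minor is zero and every even-order principal minor equals the square of a principal Pfaffian (and over $\F_2$ this minor is nonzero iff the Pfaffian is). This split into the $\mathtt{A}$-initial (forms 1--8), $\mathtt{S}$-initial (forms 14--20), and $\mathtt{N}$-initial (forms 9--13) sequences will drive the entire argument.

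For necessity I would first assemble a ``forbidden-subsequence'' toolkit. Some obstructions hold over every field and can be imported from \cite{EPR}, most importantly that two consecutive $\mathtt{N}$'s force every later entry to be $\mathtt{N}$, together with the Schur-complement identity $\det B[\alpha\cup\{i\}] = \det B[\alpha]\cdot (B/B[\alpha])_{ii}$ valid when $B[\alpha]$ is invertible, which lets one propagate nonvanishing minors upward and relate $\ell_k$ to $\ell_{k-1}$ and $\ell_{k+1}$. I would then prove the characteristic-$2$-specific obstructions: for alternating $B$ the odd positions are pinned to $\mathtt{N}$, the even positions obey the Pfaffian rank pattern, and the even rank of $B$ rigidly constrains how $\mathtt{A}$ and $\mathtt{S}$ alternate; for $\ell_1 \neq \mathtt{N}$ a bordering/Schur argument limits the interleaving of $\mathtt{A}$, $\mathtt{S}$, and $\mathtt{N}$. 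The remainder of necessity is a finite combinatorial case analysis: assuming a sequence avoids all established obstructions, one shows it must coincide with one of the twenty templates, which is where the awkward tails (a trailing $\mathtt{A}$ versus $\mathtt{AA}$ versus $\mathtt{N}$, as in forms 3--5 and 18--20) get separated.

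For sufficiency I would exhibit an explicit symmetric matrix over $\F_2$ for each template, built from a small library of blocks and direct sums, noting that the epr-sequence of a block-diagonal matrix is controlled by the minors of its blocks. The identity $I$ gives $\mathtt{A}\overline{\mathtt{A}}$ (form 1); the all-ones matrix $J$, having rank $1$, gives $\mathtt{A}\mathtt{N}\overline{\mathtt{N}}$; and the $2\times 2$ alternating block $\left[\begin{smallmatrix} 0 & 1 \\ 1 & 0\end{smallmatrix}\right]$ together with its direct sums yields the $\mathtt{NA}$- and $\mathtt{NS}$-alternating forms 9--13. The templates with long alternating $\mathtt{A}/\mathtt{S}$ runs (forms 6--8, 18--20) I would realize by induction on $n$: append a suitable row and column to a matrix realizing the length-$(n-1)$ or length-$(n-2)$ prefix so that the new top-order minors take the prescribed values, verifying each step with the Schur-complement identity. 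Appropriate direct sums and padding with $\mathtt{N}$- or $\mathtt{A}$-tails then cover the remaining patterns.

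I expect the principal obstacle to lie in necessity, and specifically in proving that the forbidden-subsequence list is \emph{complete}—that no further exotic pattern survives. Each individual obstruction is a local Schur-complement or Pfaffian computation, but assembling them into an exhaustive case analysis producing exactly these twenty forms is delicate: one must correctly determine when a trailing $\mathtt{A}$, $\mathtt{AA}$, or $\mathtt{N}$ is permitted, and explain the parity-of-$n$ condition attached to form 5, which should emerge from a rank- or determinant-parity argument. The alternating case is comparatively clean, since the Pfaffian structure over $\F_2$ determines the even-indexed entries rigidly; the heaviest bookkeeping is concentrated in the $\mathtt{A}$- and $\mathtt{S}$-initial cases.
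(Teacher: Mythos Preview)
The paper does not contain a proof of this statement. Theorem~\ref{F_2 theorem} is quoted verbatim from \cite{XMR-Char 2} (specifically, Theorems~3.2, 3.8, and 3.11 there) and is used purely as background and motivation for the $\F_3$ investigation that follows; no argument for it appears anywhere in the present paper. Consequently there is no ``paper's own proof'' against which to compare your proposal.

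That said, your outline is a plausible sketch of how such a characterization is typically established, and it tracks the structure one would expect from \cite{XMR-Char 2}: split on $\ell_1$, exploit that a zero-diagonal symmetric matrix over $\F_2$ is alternating (forcing $\ell_j=\mathtt{N}$ at odd $j$ and reducing even $j$ to Pfaffian data), assemble forbidden-subsequence constraints via Schur complements and the $\mathtt{NN}$ Theorem, and then supply explicit constructions for sufficiency. But as written it is only a plan, not a proof: the ``forbidden-subsequence toolkit'' is asserted rather than proved, the case analysis that whittles the possibilities down to exactly these twenty forms is not carried out, and the parity restriction on form~5 is acknowledged as unexplained. If your goal is to reproduce the result, you would need to consult \cite{XMR-Char 2} directly, since nothing in the present paper fills in those details.
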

Here, we build upon Theorem \ref{F_2 theorem}
by initiating a study of the epr-sequences of symmetric matrices over the field $\F=\F_3$,
as well as show that the epr-sequences of symmetric matrices over finite fields have connections to Ramsey theory and coding theory.
We note that a brief treatment of pr-sequences of symmetric matrices over $\F_3$ was presented in \cite[p.\ 231]{BIRS13}.

At the end of this section, 
an application of epr-sequences in coding theory,
one that is part of our motivation for studying the epr-sequences of symmetric matrices over finite fields, 
is identified.
Section \ref{s:prelim} contains preliminary results.
In Section \ref{s:Ramsey}, a connection between Ramsey theory and the epr-sequences of symmetric matrices over finite fields is established and used to obtain results for epr-sequences over finite fields.
Section \ref{s:F_3} focuses on the epr-sequences of symmetric matrices over the field $\F = \F_3$,
with its main result being a complete characterization of those that do not contain any $\tt S$ terms (see Theorem \ref{Main theorem}).
Before it concludes with Subsection \ref{s:Coding},
this section introduces some necessary terminology.

For a symmetric matrix $B$ with 
$\epr(B)=\ell_1\ell_2\cdots \ell_n$,
$[\epr(B)]_i := \ell_i$, for all $i \in [n]$;
moreover, for all $i,k \in [n]$  with $i+k \leq n$,
$[\epr(B)]_i^{i+k} := 
\ell_i \ell_{i+1}\ell_{i+2}\cdots \ell_{i+k}$. 
If a given sequence 
$\ell_1\ell_2 \cdots \ell_{n}$ from
$\{\tt A,N,S\}$ is the epr-sequence of a
symmetric matrix over a field $\F$, 
then we shall say that $\ell_1\ell_2 \cdots \ell_{n}$
is an ``epr-sequence over $\F$.''

Let $a$ and $b$ be positive integers.  The \textit{Ramsey number} $R(a,b)$ is the minimum integer $n$ such that, in any 2-coloring of the edges of the complete graph $K_n$, say with colors $\{\text{red},\text{blue}\}$, there is guaranteed to exist either a clique of order $a$, all of whose edges are red, or a clique of order $b$, all of whose edges are blue.  The case when $a=b$ is of particular importance; for instance, the well-known Ramsey number $R(3,3)=6$ asserts that any $\{\text{red},\text{blue}\}$ coloring of $K_6$ contains either a red or a blue triangle (while $K_5$ has a coloring that avoids this).  More generally, the `multicolor' Ramsey number $R_m(a)$ is defined as the minimum integer $n$ such that any $m$-colouring of the edges of $K_n$ induces a monochromatic clique of order $a$.   The interested reader is invited to read the excellent book \cite{Ramsey-book} for more information on Ramsey numbers, including the fact that they exist for any input parameters.  A dynamic survey of bounds on small Ramsey numbers is maintained at \cite{Ramsey-survey}.

For us, all graphs are simple, undirected and loopless.
The cycle graph on $n \geq 3$ vertices is 
denoted by $C_n$. 
Let $\F$ be a field and $G$ be a graph on $n$ vertices;
the number of vertices of $G$ is its \textit{order};
its complement is denoted by $\OL{G}$;
$G$ is {\em triangle-free} if 
it does contain $C_3$ as a subgraph;
and $A_{\F}(G)  \in \Fnn$ denotes 
the adjacency matrix of $G$ over the field $\F$.

For a given matrix $B \in \Fnn$ 
(where $\F$ is a field)  having a
nonsingular principal submatrix $B[\alpha]$, recall that
the Schur complement of $B[\alpha]$ in $B$ is the matrix
$B/B[\alpha] :=
B[\alpha^c] - B[\alpha^c,\alpha](B[\alpha])^{-1}B[\alpha,\alpha^c]$,
where $\alpha^c = [n] \setminus \alpha$ (see, for example, \cite{FZhang-Schur}).
Two given matrices $A$ and $B$ are said to be
{\em permutationally similar} if there exists a
permutation matrix $P$ such that $B=P^{\top}AP$;
moreover, the block diagonal matrix with 
the matrices $A$ and $B$ on 
the diagonal (in that order) is denoted by $A \oplus B$.
The $n\times n$ diagonal matrix whose 
$j$th diagonal entry is $d_j$ is 
denoted by $\diag(d_1,d_2, \dots,d_n)$.
With $I_n$, $O_{n}$ and $J_{m,n}$ we denote, respectively,
the $n \times n$ identity matrix,
the $n \times n$ zero matrix and
the $m \times n$ matrix all of whose entries are equal to $1$.
For integers $a$, $b$ and $p$, where $p >0$,
$a \equiv b \pmod{p}$ denotes
the congruence modulo $p$ of $a$ and $b$.

\subsection{An application in coding theory}\label{s:Coding}
$\null$
\indent
In this subsection,
an application in coding theory of epr-sequences of symmetric matrices over finite fields is identified.

A $q$-\textit{ary code} is a subset of $\F_q^n$ for some positive integer $n$, which is called the \textit{length} of the code.  A code $C \subseteq \F_q^n$ is \textit{linear} if it is a subspace of $\F_q^n$; in this case, we have $|C|=q^k$, where $k = \dim (C)$. The elements of $C$ are called \textit{codewords}.  A matrix $G \in \F_q^{k \times n}$ whose row space equals $C$ is called a \textit{generator matrix} for the linear code $C$.

The study of coding theory typically aims to control the number of positions in which two elements of $C$ differ.  With this in mind, we recall the \textit{Hamming distance} on $\F_q^n$ as $d(u,v) = |\{i : u_i \neq v_i\}|$. The \textit{minimum distance} of a code $C$ is $\min \{d(u,v): u,v \in C, u \neq v\}$.  For a linear code $C$, it is easy to see that its minimum distance also equals the least nonzero `weight' of a codeword; that is, $\min \{w(u):u \in C, u \neq 0\}$, where $w(u)=d(u,0)$ is the \textit{Hamming weight} of $u \in \F_q^n$.

Given a linear code $C \subseteq \F_q^n$, its \textit{dual code} is $C^\perp = \{v \in \F_q^n : u^\top v = 0 ~\forall\; u \in C\}$.  A generator matrix $H$ for $C^\perp$ is also called a \textit{parity-check matrix} for $C$.  Indeed, $H$ checks for membership in $C$ in the sense that $C = \text{ker}(H)$.  The connection with weights is as follows.  Given an $m \times n$ matrix with $m<n$, its \textit{spark} is the least integer $s$ such that there exists a set of $s$ linearly dependent columns in the matrix;  this integer $s$ can be viewed as the least weight of a nonzero vector in the kernel of the matrix.  It follows that the minimum distance of a linear code $C$ equals the spark of its parity-check matrix $H$.  

More generally, the \emph{weight enumerator} of a linear code $C$ of length $n$ is the polynomial
$$W(x,y) = \sum_{j=0}^n A_j x^j y^{n-j},$$
where $A_j=|\{u \in C: w(u) = j\}|$, the number of codewords of weight $j$.

The next result connects weights in a linear code to the epr-sequence of a certain {\em symmetric} matrix.

\begin{thm}\label{codes}
Let $C \subseteq \F_q^n$ be a nonzero linear code, $H$ be a parity-check matrix for $C$ and $\epr(H^\top H) = \ell_1\ell_2\cdots\ell_n$.  
For all $j \in [n]$, if $C$ has a codeword of weight $j$, then $\ell_j \neq \tt{A}$.  In particular, the minimum distance of $C$ is at least
$\min\{j \in [n]: \ell_j \neq {\tt A}\}$.
\end{thm}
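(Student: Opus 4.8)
The plan is to read off a singular principal submatrix of $H^\top H$ directly from a low-weight codeword, using the Gram-matrix structure of $H^\top H$. Fix $j \in [n]$ and suppose $C$ contains a codeword $u$ of weight $j$; set $\alpha = \supp(u) \subseteq [n]$, so that $|\alpha| = j$. The first observation is that the order-$j$ principal submatrix $(H^\top H)[\alpha]$ is exactly the Gram matrix $H_\alpha^\top H_\alpha$, where $H_\alpha$ denotes the submatrix of $H$ consisting of the columns indexed by $\alpha$: this is immediate from the fact that the $(s,t)$ entry of $H^\top H$ is the dot product of columns $s$ and $t$ of $H$, and restricting $s,t$ to $\alpha$ keeps precisely the columns of $H_\alpha$.

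Next I would convert the codeword relation into a rank bound. Since $u \in C = \ker(H)$, we have $Hu = 0$, i.e.\ $\sum_{i \in \alpha} u_i\,(\text{column } i \text{ of } H) = 0$; as every $u_i$ with $i \in \alpha$ is nonzero, this is a nontrivial dependence among the columns of $H_\alpha$, so $\rank(H_\alpha) \le j-1$. Combining this with submultiplicativity of rank gives $\rank\big((H^\top H)[\alpha]\big) = \rank(H_\alpha^\top H_\alpha) \le \rank(H_\alpha) < j$, so this $j \times j$ principal submatrix is singular and the corresponding principal minor of order $j$ vanishes. Hence not all order-$j$ principal minors are nonzero, which is precisely $\ell_j \neq {\tt A}$. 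For the final clause, recall that the minimum distance $d$ of $C$ equals the least weight of a nonzero codeword, so $C$ has a codeword of weight $d$; by the statement just proved, $\ell_d \neq {\tt A}$, whence $d \in \{j \in [n] : \ell_j \neq {\tt A}\}$ and therefore $d \ge \min\{j \in [n] : \ell_j \neq {\tt A}\}$.

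The one place that genuinely needs care is the rank step over a finite field. Over $\R$ one has the stronger equality $\rank(A^\top A) = \rank(A)$, coming from positive-definiteness of the associated quadratic form, but this can fail over $\F_q$, where isotropic columns may collapse the Gram matrix even further. The proof, however, uses only the inequality $\rank(A^\top A) \le \rank(A)$, which is the special case $\rank(XY) \le \min\{\rank(X),\rank(Y)\}$ of a bound valid over any field; so the main obstacle is simply to ensure the argument never appeals to the characteristic-zero equality, but only to this inequality.
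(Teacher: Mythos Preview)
Your proof is correct and follows essentially the same approach as the paper's: identify $(H^\top H)[\alpha]$ with the Gram matrix $H_\alpha^\top H_\alpha$, use $Hu=0$ to force $\rank(H_\alpha)<j$, and conclude via $\rank(H_\alpha^\top H_\alpha)\le\rank(H_\alpha)$. Your added remarks on the rank inequality over finite fields and the explicit derivation of the ``in particular'' clause are welcome but not present in the paper's terser version.
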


\begin{proof}
Let $u \in C$ be a codeword of weight $j>0$  and $u=[u_1,u_2, \dots,u_n]^\top$, and put $\alpha=\{i:u_i \neq 0\}$ (such a $u$ exists because $C$ is nonzero). 
Observe that $|\alpha|=j$.
As $Hu=0$, the columns of $H$ that are indexed by $\alpha$ are linearly dependent. 
Let us denote the restriction of $H$ to columns in $\alpha$ by  $\widetilde{H}$ (i.e., $\widetilde{H}$ is the matrix obtained from $H$ by deleting the columns of $H$ that are not indexed by $\alpha$).
Then, as $(H^\top H)[\alpha] = \widetilde{H}^\top \widetilde{H}$ and $\text{rank}(\widetilde{H}^\top \widetilde{H}) \le \text{rank}(\widetilde{H}) < |\alpha|$, the $j \times j$ principal submatrix $(H^\top H)[\alpha]$ is singular, implying that $\ell_j \neq \tt A$.
\end{proof}

We remark that, over fields of characteristic zero, 
the spark of $H$ is exactly 
$\min\{j \in [n]: \ell_j \neq \tt A\}$,
where $\epr(H^\top H) = \ell_1\ell_2\cdots \ell_n$.  
In this sense, epr-sequences represent a refinement of spark.  Over finite fields, the converse of Theorem~\ref{codes} can fail, in the sense that $\ell_j \neq \tt{A}$ in spite of there being no codewords of weight $j$.  
In particular, it can be shown 
(using Theorems \ref{AA... over F2} and  \ref{AAA...})
that, for $C \subseteq \F_q^n$, where $q \in \{2,3\}$, 
we have $\min\{j \in [n]: \ell_j \neq {\tt A}\} \le q$, independent of the minimum distance of $C$.  
It may be of interest to explore how close this measure is to the minimum distance for larger $q$.

%

\section{Preliminary results}\label{s:prelim}
$\null$
\indent
This section contains preliminary results that are needed in subsequent sections, and it starts with some known facts.

\subsection{Results cited}
$\null$
\indent
The following well-known fact 
(see, for example, \cite{BIRS13})
states that the rank of a symmetric matrix $B$ is 
equal to the order of a 
largest nonsingular principal submatrix of $B$;
because of this, we shall call the rank of a 
symmetric matrix ``principal.''

\begin{thm}\label{Principal rank}
\label{thm: rank of a symm mtx}
{\rm \cite[Theorem 1.1]{BIRS13}}
Let $\F$ be a field and $B \in \Fnn$ be symmetric.
Then 
$\rank(B) = 
\max \{ |\gamma| : \det (B[\gamma]) \neq 0 \}$, where
the maximum over the empty set is defined to be 0.
\end{thm}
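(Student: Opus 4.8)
The plan is to prove the two inequalities $\max\{|\gamma|:\det(B[\gamma])\neq 0\}\le\rank(B)$ and $\rank(B)\le\max\{|\gamma|:\det(B[\gamma])\neq 0\}$ separately; write $m$ for the left-hand maximum and $r=\rank(B)$. The first inequality is routine and uses no symmetry: if $B[\gamma]$ is nonsingular with $|\gamma|=m$, then the $m$ columns of $B[\gamma]$ are linearly independent, hence so are the $m$ full columns of $B$ indexed by $\gamma$ (adjoining further coordinates cannot destroy independence), giving $r\ge m$.

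For the reverse inequality I would use a maximality/Schur-complement argument. Choose $\gamma\subseteq[n]$ with $\det(B[\gamma])\neq0$ and $|\gamma|=m$ as large as possible. If $\gamma=[n]$ then $B$ is nonsingular and $r=n=m$, so assume $\gamma^c\neq\emptyset$ and form the Schur complement $S:=B/B[\gamma]$, a symmetric matrix of order $n-m$ indexed by $\gamma^c$ (symmetry of $S$ is inherited from that of $B$). The goal is to show $S=O$; once this is known, the standard block factorization of $B$ through $B[\gamma]$, with unit-triangular and hence invertible outer factors, yields $\rank(B)=\rank(B[\gamma])+\rank(S)=m+0=m$, completing the proof. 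I would verify this factorization (and the resulting rank additivity) explicitly rather than cite it.

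It remains to prove $S=O$, and this is the crux. The key tools are the Schur determinant identity $\det(B[\gamma\cup\delta])=\det(B[\gamma])\cdot\det\big((B/B[\gamma])[\delta]\big)$ for $\delta\subseteq\gamma^c$, together with the restriction property $(B/B[\gamma])[\delta]=B[\gamma\cup\delta]/B[\gamma]$. Suppose $S\neq O$. If some diagonal entry $S_{ii}\neq0$, then taking $\delta=\{i\}$ gives $\det(B[\gamma\cup\{i\}])=\det(B[\gamma])\,S_{ii}\neq0$, contradicting the maximality of $|\gamma|$. The subtler case, and the main obstacle, is when $S$ has zero diagonal but a nonzero off-diagonal entry $S_{ij}\neq0$ with $i\neq j$; this genuinely occurs for symmetric matrices over fields (e.g.\ $\bigl[\begin{smallmatrix}0&1\\1&0\end{smallmatrix}\bigr]$), and it shows that extending $\gamma$ by a single index does not suffice. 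Here I would instead take $\delta=\{i,j\}$: by symmetry $S[\{i,j\}]=\bigl[\begin{smallmatrix}0&S_{ij}\\ S_{ij}&0\end{smallmatrix}\bigr]$, whose determinant is $-S_{ij}^2\neq0$ (nonzero in any field, since fields have no zero divisors, and equal to $S_{ij}^2\neq0$ in characteristic $2$). Thus $\det(B[\gamma\cup\{i,j\}])=\det(B[\gamma])\cdot(-S_{ij}^2)\neq0$, again contradicting maximality. Hence $S=O$.

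I anticipate that the only delicate point is precisely this two-index extension: the natural attempt is to enlarge $\gamma$ by one index, but that argument breaks down exactly for the isotropic symmetric blocks, and it is the symmetry of $S$ that forces the relevant $2\times2$ principal minor to equal $-S_{ij}^2$ and hence be nonzero over every field. Everything else reduces to the standard determinant and rank identities for Schur complements recalled above.
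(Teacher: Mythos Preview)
Your argument is correct. Note, however, that the paper does not supply its own proof of this theorem: it is simply quoted as \cite[Theorem~1.1]{BIRS13} and used as a background fact, so there is no in-paper proof to compare against.

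That said, your approach is a clean, self-contained one and dovetails nicely with the tools the paper already records. The inequality $m\le r$ is indeed trivial and requires no symmetry. For $r\le m$, your Schur-complement strategy is exactly right, and the two ingredients you invoke---that $S=B/B[\gamma]$ is symmetric and that $\det(B[\gamma\cup\delta])=\det(B[\gamma])\det(S[\delta])$---are precisely parts (i) and (ii) of the Schur Complement Theorem stated in the paper. The two-index extension in the ``isotropic'' case (diagonal of $S$ vanishes but $S_{ij}\neq0$) is the genuine content, and your handling of it is correct over every field: $\det S[\{i,j\}]=-S_{ij}^2\neq0$ since fields have no zero divisors, with no special pleading needed in characteristic~$2$. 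The block factorization $B=L\,(B[\gamma]\oplus S)\,L^\top$ with unit lower-triangular $L$ then gives $\rank(B)=\rank(B[\gamma])+\rank(S)=m$, as you indicate; writing this out explicitly, as you propose, is a good idea since the paper does not state the rank-additivity consequence separately.
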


If $B$ is a given nonsingular symmetric matrix, 
then the epr-sequence of $B^{-1}$ is obtained readily from that of $B$:

\begin{thm}\label{Inverse Thm}
{\rm\cite[Theorem 2.4]{EPR} (Inverse Theorem.)}
Let $\F$ be a field and $B \in \Fnn$ be symmetric.
Suppose that $B$ is nonsingular.
If $\epr(B) = \ell_1\ell_2 \cdots \ell_{n-1}\tt{A}$, then
$\epr(B^{-1}) = \ell_{n-1}\ell_{n-2} \cdots \ell_{1}\tt{A}$.
\end{thm}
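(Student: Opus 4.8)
The plan is to reduce everything to Jacobi's complementary minor identity, which relates a principal minor of $B^{-1}$ to the complementary principal minor of $B$. First I would record that $B^{-1}$ is itself symmetric, since $(B^{-1})^\top = (B^\top)^{-1} = B^{-1}$, so that $\epr(B^{-1})$ is well defined. I would also note that, $B$ being nonsingular, its order-$n$ principal minor $\det(B)$ is nonzero (so $\ell_n = \tt{A}$), and likewise $\det(B^{-1}) = 1/\det(B) \neq 0$, which already pins down the last term of $\epr(B^{-1})$ as $\tt{A}$. It therefore remains to show $[\epr(B^{-1})]_j = \ell_{n-j}$ for each $j \in [n-1]$.

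The key step is to invoke Jacobi's identity: for every $\alpha \subseteq [n]$, writing $\alpha^c = [n] \setminus \alpha$, one has $\det(B^{-1}[\alpha]) = \det(B[\alpha^c])/\det(B)$. Because we are dealing with principal submatrices (the row and column index sets coincide), the sign factor $(-1)^{\sum_{i\in\alpha} i + \sum_{i\in\alpha} i}$ appearing in the general Jacobi formula equals $+1$, which is what makes the statement this clean; I would state this carefully, as it is the one place where the symmetry of the index sets is used. Since $\det(B)$ is a fixed nonzero scalar, the identity yields, for every $\alpha$, the equivalence $\det(B^{-1}[\alpha]) \neq 0 \iff \det(B[\alpha^c]) \neq 0$.

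With this equivalence in hand, the conclusion is a bijection/counting argument. Fix $j \in [n-1]$. As $\alpha$ ranges over all $j$-element subsets of $[n]$, the complement $\alpha^c$ ranges bijectively over all $(n-j)$-element subsets, so the collection of order-$j$ principal minors of $B^{-1}$ matches, term by term in its zero/nonzero pattern, the collection of order-$(n-j)$ principal minors of $B$. Consequently: all order-$j$ minors of $B^{-1}$ are nonzero precisely when all order-$(n-j)$ minors of $B$ are ($\tt{A} \leftrightarrow \tt{A}$); none are nonzero precisely when none are for $B$ ($\tt{N} \leftrightarrow \tt{N}$); and the intermediate case matches as well ($\tt{S} \leftrightarrow \tt{S}$). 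Thus $[\epr(B^{-1})]_j = \ell_{n-j}$, and assembling these with the already-established final $\tt{A}$ gives $\epr(B^{-1}) = \ell_{n-1}\ell_{n-2}\cdots\ell_1\tt{A}$.

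The main obstacle is really just establishing the minor identity $\det(B^{-1}[\alpha]) = \det(B[\alpha^c])/\det(B)$ in a self-contained way and with the correct ($+1$) sign; everything after that is bookkeeping over subsets. If citing Jacobi's theorem directly is undesirable, an alternative is to derive the identity through the Schur complement relation $(B^{-1})[\alpha^c] = (B/B[\alpha])^{-1}$ together with $\det(B) = \det(B[\alpha])\det(B/B[\alpha])$; however, this route requires $B[\alpha]$ to be nonsingular and so would need an additional argument to cover every $\alpha$, which is why I would prefer the direct Jacobi identity.
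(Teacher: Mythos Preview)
The paper does not give its own proof of this statement: Theorem~\ref{Inverse Thm} appears in the ``Results cited'' subsection and is quoted directly from \cite[Theorem 2.4]{EPR} without argument, so there is nothing in the present paper to compare your proposal against. That said, your approach via Jacobi's complementary minor identity is correct and is precisely the standard proof; the identity $\det(B^{-1}[\alpha]) = \det(B[\alpha^c])/\det(B)$ for principal index sets (with the sign collapsing to $+1$ as you note) immediately yields the bijective correspondence between the zero/nonzero patterns of order-$j$ principal minors of $B^{-1}$ and order-$(n-j)$ principal minors of $B$, and the rest is bookkeeping.
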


If $B$ is a given matrix and 
$C$ is a principal submatrix of $B$, 
then some of the terms in $\epr(C)$ may be 
deduced from those in $\epr(B)$, 
assuming that the latter is known:

\begin{thm}\label{Inheritance}
{\rm{\cite[Theorem 2.6]{EPR}}}
{\rm (Inheritance Theorem.)}
Let $\F$ be a field, $B \in \Fnn$ be symmetric, 
$m \leq n$ and $1\le i \le m$.
Then the following statements hold:
\ben
\item
If $[\epr(B)]_i={\tt N}$, then  $[\epr(C)]_i={\tt N}$,
for all $m\times m$ principal submatrices $C$.
\item If  $[\epr(B)]_i={\tt A}$, then  $[\epr(C)]_i={\tt A}$,
for all $m\times m$ principal submatrices $C$.

\item If $[\epr(B)]_m={\tt S}$, then there exist $m\times m$ principal submatrices $C_A$ and $C_N$ of $B$ such that $[\epr(C_A)]_m = {\tt A}$ and $[\epr(C_N)]_m = {\tt N}$.
\item If $i < m$ and $[\epr(B)]_i = {\tt S}$, then there exists an $m \times m$ principal submatrix $C_S$ such that $[\epr(C_S)]_i ={\tt S}$.
\een
\end{thm}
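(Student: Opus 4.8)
The unifying observation I would record at the outset is that the order-$i$ principal minors of any $m\times m$ principal submatrix $C=B[\gamma]$ (with $|\gamma|=m$) are precisely the minors $\det(B[\delta])$ for those $i$-subsets $\delta\subseteq\gamma$. Consequently, every order-$i$ principal minor of $C$ is also an order-$i$ principal minor of $B$. This single containment drives Parts 1--3. For Part 1, $[\epr(B)]_i={\tt N}$ says all order-$i$ principal minors of $B$ vanish, so the ones belonging to $C$ vanish too, giving $[\epr(C)]_i={\tt N}$. Part 2 is the mirror image: $[\epr(B)]_i={\tt A}$ forces every order-$i$ minor of $B$, hence every order-$i$ minor of $C$, to be nonzero. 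For Part 3, the order-$m$ principal minors of $B$ are exactly the determinants $\det(B[\gamma])$ as $\gamma$ ranges over $m$-subsets of $[n]$; since $[\epr(B)]_m={\tt S}$, at least one such determinant is nonzero and at least one is zero. Taking $\gamma_A,\gamma_N$ realizing these and setting $C_A=B[\gamma_A]$, $C_N=B[\gamma_N]$, each has a single order-$m$ minor, nonzero and zero respectively, so $[\epr(C_A)]_m={\tt A}$ and $[\epr(C_N)]_m={\tt N}$.

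Part 4 is the substantive case. Here $[\epr(B)]_i={\tt S}$ supplies an $i$-subset $\delta_1$ with $\det(B[\delta_1])\neq 0$ and an $i$-subset $\delta_2$ with $\det(B[\delta_2])=0$, and I want a single $m$-subset $\gamma$ containing witnesses of both kinds, so that $B[\gamma]$ inherits ${\tt S}$ in position $i$. The plan is to replace the possibly far-apart pair $(\delta_1,\delta_2)$ by an adjacent pair. I would consider the graph on all $i$-subsets of $[n]$ in which two subsets are joined when they differ in exactly one element; this graph is connected. Two-coloring its vertices according to whether the corresponding order-$i$ minor of $B$ is nonzero or zero, both colors occur (at $\delta_1$ and $\delta_2$), so connectivity forces an edge between a ``nonzero'' subset $\sigma$ and a ``zero'' subset $\sigma'$. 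Since $\sigma$ and $\sigma'$ differ in a single element, $|\sigma\cup\sigma'|=i+1\le m$, using the hypothesis $i<m$. Extending $\sigma\cup\sigma'$ to any $m$-subset $\gamma\subseteq[n]$ (possible because $m\le n$), the submatrix $C_S=B[\gamma]$ has $\det(B[\sigma])\neq 0$ and $\det(B[\sigma'])=0$ among its order-$i$ principal minors, so $[\epr(C_S)]_i={\tt S}$.

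The hard part is precisely the step just described: the naive idea of taking $\gamma\supseteq\delta_1\cup\delta_2$ fails because $|\delta_1\cup\delta_2|$ can be as large as $2i$, exceeding $m$ when $m<2i$. The resolution is to exploit connectivity of the subset-swap graph to produce witnesses that differ in only one index, so that their union has size $i+1$ and comfortably fits inside an $m$-set by the bound $i+1\le m$. Once that adjacent pair is in hand, the conclusion is immediate. Parts 1--3, by contrast, require nothing beyond the containment observation.
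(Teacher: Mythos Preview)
Your argument is correct in all four parts. The containment observation handles Parts 1--3 immediately, and your connectivity argument on the Johnson graph of $i$-subsets is a clean and standard way to produce an adjacent pair $\sigma,\sigma'$ with $|\sigma\cup\sigma'|=i+1\le m$, which is exactly what Part 4 needs.

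There is, however, nothing to compare against here: the paper does not prove the Inheritance Theorem. It appears in the subsection ``Results cited'' and is quoted verbatim from \cite[Theorem~2.6]{EPR} without proof. So your write-up stands on its own as a valid proof of a result the present paper merely imports.
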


We shall need the following well-known fact about Schur complements.

\begin{thm}\label{Schur}
{\rm  (Schur Complement Theorem.)}
Let $\F$ be a field, $B \in \Fnn$ be symmetric,
$\alpha \subset [n]$, 
$\alpha^c = [n]\setminus \alpha$ and $|\alpha| = k$.
Suppose that $B[\alpha]$ is nonsingular,
and let $S = B/B[\alpha]$.
Then the following statements hold:
\begin{enumerate}
\item [$(i)$] 
$S$ is an $(n-k)\times (n-k)$ symmetric matrix.
\item [$(ii)$] 
{\rm \cite[p. 771]{Brualdi & Schneider}}
If the indexing of $S$ is inherited from $B$, then,
for all $\gamma \subseteq \alpha^c$,
\[
\det(S[\gamma]) = 
\frac{\det(B[\gamma \cup \alpha])}{\det(B[\alpha])}.
\]
\end{enumerate}
\end{thm}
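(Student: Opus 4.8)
The plan is to handle the two parts separately, deriving part $(ii)$ from the classical single-block Schur determinant identity $\det M = \det(M[\alpha])\cdot\det(M/M[\alpha])$ (valid for any matrix $M$ whose leading principal block $M[\alpha]$ is nonsingular), applied to suitable principal submatrices of $B$.

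For part $(i)$, I would read off the order and symmetry directly from the definition $S = B[\alpha^c] - B[\alpha^c,\alpha](B[\alpha])^{-1}B[\alpha,\alpha^c]$. Since $|\alpha^c| = n-k$, the matrix $S$ is $(n-k)\times(n-k)$. For symmetry, note that $B[\alpha^c]$ is symmetric (being a principal submatrix of the symmetric matrix $B$), that $(B[\alpha])^{-1}$ is symmetric (as the inverse of the symmetric nonsingular matrix $B[\alpha]$), and that $B[\alpha,\alpha^c]^\top = B[\alpha^c,\alpha]$ because $B$ is symmetric. Transposing the product $B[\alpha^c,\alpha](B[\alpha])^{-1}B[\alpha,\alpha^c]$ therefore returns the same matrix, so $S$ is symmetric.

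For part $(ii)$, I would fix $\gamma \subseteq \alpha^c$ and first establish the submatrix identity
\[
S[\gamma] = B[\gamma] - B[\gamma,\alpha](B[\alpha])^{-1}B[\alpha,\gamma] = B[\gamma\cup\alpha]/B[\alpha].
\]
This follows by restricting the defining expression for $S$ to the rows and columns indexed by $\gamma$: here the $\gamma$-principal submatrix of $B[\alpha^c]$ is $B[\gamma]$, while selecting the $\gamma$-rows of $B[\alpha^c,\alpha]$ produces $B[\gamma,\alpha]$ and the $\gamma$-columns of $B[\alpha,\alpha^c]$ produce $B[\alpha,\gamma]$, with the inner factor $(B[\alpha])^{-1}$ unchanged. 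The right-hand side is precisely the Schur complement of $B[\alpha]$ in the larger principal submatrix $B[\gamma\cup\alpha]$, whose leading block relative to $\alpha$ is the nonsingular $B[\alpha]$.

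With this identity in hand, I would apply the single-block determinant identity to $M := B[\gamma\cup\alpha]$, obtaining $\det(B[\gamma\cup\alpha]) = \det(B[\alpha])\cdot\det(B[\gamma\cup\alpha]/B[\alpha]) = \det(B[\alpha])\cdot\det(S[\gamma])$; dividing by $\det(B[\alpha]) \ne 0$ then yields the claimed formula. I expect the only delicate point to be the index-set bookkeeping: making ``inherited indexing'' precise so that $S[\gamma]$ is unambiguous, and verifying that after a permutation placing $\alpha$ first the blocks of $B[\gamma\cup\alpha]$ align correctly with the definition of the Schur complement. This is routine once the displayed submatrix identity is in place; alternatively, part $(ii)$ may simply be cited, as indicated, from the work of Brualdi and Schneider.
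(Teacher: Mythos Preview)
Your proposal is correct. Note, however, that the paper does not actually supply a proof of this theorem: it is stated as a well-known fact, with part $(ii)$ attributed to Brualdi and Schneider, so there is no original argument to compare against. Your proof is the standard one---the key observation that restricting the Schur complement $S$ to $\gamma$ coincides with the Schur complement of $B[\alpha]$ in $B[\gamma\cup\alpha]$, followed by the block determinant identity---and it is fine as written.
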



If there are two consecutive $\tt N$s in 
the epr-sequence of a symmetric matrix, then each
letter in the sequence from that point forward is $\tt N$:

\begin{thm}\label{NN result}
{\rm \cite[Theorem 2.3]{EPR} ($\tt NN$ Theorem.)}
Let $\F$ be a field, $B \in \Fnn$ be symmetric and
$\epr(B) = \ell_1\ell_2 \cdots \ell_n$.
Suppose that $\ell_k = \ell_{k+1} = \tt{N}$, for some $k$.
Then, for all $j \geq k$, $\ell_j = \tt{N}$.
\end{thm}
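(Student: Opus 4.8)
The plan is to argue by a downward descent on the order of a nonzero principal minor, reducing the entire statement to a single local lemma: \emph{for a symmetric matrix, the existence of a nonzero principal minor of order $t$ (with $t \ge 2$) forces a nonzero principal minor of order $t-1$ or of order $t-2$.} Granting this lemma, the theorem follows quickly. Suppose $\ell_k = \ell_{k+1} = \tt N$ but, toward a contradiction, $\ell_j \neq \tt N$ for some $j \ge k$; since $\ell_k = \ell_{k+1} = \tt N$, we must have $j \ge k+2$. Let $t^\ast$ be the smallest index with $t^\ast \ge k$ and $\ell_{t^\ast} \neq \tt N$; then $t^\ast \ge k+2 \ge 3$, so the lemma produces an index in $\{t^\ast-1,\, t^\ast-2\}$, which is still $\ge k$ (indeed $t^\ast - 2 \ge k \ge 1$), at which the letter is not $\tt N$ — contradicting the minimality of $t^\ast$. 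The underlying point is simply that descending steps of size $1$ or $2$ cannot leap over both of the consecutive positions $k$ and $k+1$.

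To prove the lemma, I would first reduce to the nonsingular case. A nonzero principal minor of order $t$ means $B$ has a nonsingular principal submatrix $B[\delta]$ with $|\delta| = t$; since every principal submatrix of $B[\delta]$ is again a principal submatrix of $B$ with the same determinant, it suffices to exhibit a nonsingular principal submatrix of $B[\delta]$ of order $t-1$ or $t-2$. So set $M := B[\delta]$, a nonsingular symmetric matrix of order $t$, whence $\rank(M) = t$ by Theorem~\ref{Principal rank}. Delete one matching row and column of $M$ to obtain a symmetric principal submatrix $M'$ of order $t-1$. Deleting a single row lowers the rank by at most $1$, and then deleting a single column lowers it by at most $1$ more, so $\rank(M') \ge t-2$; on the other hand $\rank(M') \le t-1$. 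If $\rank(M') = t-1$, then $M'$ itself is a nonsingular principal submatrix of order $t-1$ and we are done; otherwise $\rank(M') = t-2$, and Theorem~\ref{Principal rank} applied to the symmetric matrix $M'$ yields a nonsingular principal submatrix of order $t-2$, again as required.

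The two load-bearing facts are Theorem~\ref{Principal rank} (which converts ``rank'' back into ``a nonzero principal minor of that order,'' and is exactly what makes the $t-2$ case usable) and the elementary rank inequality that removing a symmetric pair of a row and column drops the rank by at most two. I expect the main obstacle to be purely expository: keeping track of index sets so that a nonsingular principal submatrix of $M'$ (or of $M = B[\delta]$) is correctly identified as a principal submatrix of the original $B$, so that its nonzero determinant is a bona fide principal minor of $B$ of the claimed order. Everything else is a finite descent. As an alternative to the row/column-deletion bound, one could run the same case analysis through the Schur Complement Theorem (Theorem~\ref{Schur}), contracting $M$ on a well-chosen nonsingular $1\times 1$ or $2\times 2$ principal block, but the rank argument above avoids the extra invertibility bookkeeping and seems cleanest.
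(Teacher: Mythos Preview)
Your argument is correct. The descent lemma --- that a nonsingular symmetric matrix of order $t$ has a nonsingular principal submatrix of order $t-1$ or $t-2$ --- is sound: the rank bound $\rank(M') \ge t-2$ together with Theorem~\ref{Principal rank} does exactly what you need, and the minimality contradiction is clean since $t^\ast - 2 \ge k$. One small cosmetic point: your lemma is stated for $t \ge 2$, but for $t=2$ the ``order $t-2$'' option is the empty principal minor; this is harmless (and in any case your application only invokes $t^\ast \ge 3$), but you might tighten the hypothesis to $t \ge 3$ or remark on the convention.

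As for comparison: the present paper does not actually prove this theorem. It is quoted verbatim as a known result from \cite[Theorem~2.3]{EPR} and used as a black box throughout, so there is no in-paper proof to compare against. Your approach is essentially the standard one and matches the spirit of the original argument in \cite{EPR}, which also hinges on Theorem~\ref{Principal rank} to convert rank information back into the existence of a nonzero principal minor of the right order.
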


The epr-sequence of any symmetric matrix over a field of characteristic not $2$ has neither $\tt NAN$ nor $\tt NAS$ as a subsequence:

\begin{thm}\label{NAN-NAS}
{\rm \cite[Theorem 2.14]{EPR}}
Let $\F$ be a field of characteristic not $2$, and 
let $B \in \Fnn$ be symmetric. 
Then neither {\tt NAN} nor {\tt NAS}
are subsequences of $\epr(B)$.
\end{thm}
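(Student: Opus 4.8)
The plan is to show that whenever $\ell_k={\tt N}$ and $\ell_{k+1}={\tt A}$ one necessarily has $\ell_{k+2}={\tt A}$; since the only alternatives for $\ell_{k+2}$ are ${\tt N}$ and ${\tt S}$, this rules out both ${\tt NAN}$ and ${\tt NAS}$ as (contiguous) subsequences at once. First I would reduce to order $k+2$. Fix any $(k+2)$-subset $\gamma\subseteq[n]$ and set $C=B[\gamma]$. By the Inheritance Theorem (Theorem~\ref{Inheritance}), $[\epr(C)]_k={\tt N}$ and $[\epr(C)]_{k+1}={\tt A}$, so it suffices to prove that every such $C$ is nonsingular. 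Hence I may assume $n=k+2$, $\ell_{n-2}={\tt N}$ and $\ell_{n-1}={\tt A}$, and the goal becomes $\det B\neq 0$.

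Next I would produce a nonsingular principal submatrix of order $n-3$. Since $\ell_{n-1}={\tt A}$, there is a nonsingular $(n-1)$-principal submatrix $M=B[\beta]$. Every order-$(n-2)$ principal minor of $M$ is an order-$(n-2)$ principal minor of $B$, hence $0$, so the entry in position $n-2$ of $\epr(M)$ is ${\tt N}$, while its last entry (position $n-1$) is ${\tt A}$ because $M$ is nonsingular. If the entry of $\epr(M)$ in position $n-3$ were also ${\tt N}$, then two consecutive ${\tt N}$s (positions $n-3,n-2$) would force the last entry to be ${\tt N}$ by the ${\tt NN}$ Theorem (Theorem~\ref{NN result}), a contradiction. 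Therefore $M$ has a nonsingular principal submatrix $B[\alpha]$ with $|\alpha|=n-3$ (taking $\alpha=\emptyset$ directly when $n=3$).

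I would then pass to the $3\times 3$ symmetric Schur complement $S=B/B[\alpha]$ and read off its principal minors through Theorem~\ref{Schur}$(ii)$: for $\delta\subseteq\alpha^c$ one has $\det(S[\delta])=\det(B[\delta\cup\alpha])/\det(B[\alpha])$. Taking $|\delta|=1$ shows each diagonal entry of $S$ equals an order-$(n-2)$ minor of $B$ divided by $\det(B[\alpha])$, hence vanishes; taking $|\delta|=2$ shows every $2\times 2$ principal minor of $S$ equals a nonzero order-$(n-1)$ minor of $B$ divided by $\det(B[\alpha])$, hence is nonzero; and taking $\delta=\alpha^c$ gives $\det S=\det(B)/\det(B[\alpha])$. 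Now a $3\times3$ symmetric matrix with zero diagonal and all three $2\times2$ principal minors nonzero must have all off-diagonal entries $x,y,z$ nonzero (the minors are $-x^2,-y^2,-z^2$), and a direct expansion gives $\det S=2xyz$. Since $\chr\F\neq 2$, this is nonzero, whence $\det B=\det(S)\det(B[\alpha])\neq 0$, as required.

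The crux of the argument—and the only place where I expect genuine difficulty—is guaranteeing the nonsingular $(n-3)$-submatrix in the second step: a symmetric matrix need not possess nonsingular principal submatrices of every order below its rank, so the ${\tt NN}$ Theorem is doing essential work there. The hypothesis $\chr\F\neq 2$ enters only at the very end, in the evaluation $\det S=2xyz$; over characteristic $2$ this determinant collapses to $0$ (and indeed ${\tt NAN}$ does occur there, e.g.\ for $A_{\F}(C_3)$), which is precisely why the statement must exclude that case.
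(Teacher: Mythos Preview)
Your argument is correct. Note, however, that in the present paper this theorem is merely \emph{cited} from \cite{EPR} (it appears in Section~\ref{s:prelim} under ``Results cited'') and no proof is given here, so there is nothing in this paper to compare against directly. For what it is worth, the proof in \cite{EPR} follows essentially the same Schur-complement strategy you outline: reduce to a $3\times 3$ symmetric matrix $S$ with zero diagonal and all $2\times 2$ principal minors nonzero, then observe $\det S=2xyz\neq 0$ in characteristic $\neq 2$.

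One small comment on exposition: in your second paragraph the appeal to the ${\tt NN}$ Theorem to locate a nonsingular $(n-3)\times(n-3)$ principal submatrix requires $n\geq 4$ (so that position $n-3$ in $\epr(M)$ exists); you handle $n=3$ separately by taking $\alpha=\varnothing$, which is fine, but it would read more cleanly if you flagged that case split explicitly rather than parenthetically. Also, since the rank of a symmetric matrix is principal (Theorem~\ref{Principal rank}), one could alternatively argue that $\rank B\geq n-1$ forces a nonsingular principal submatrix in \emph{every} order $\leq n-1$ except possibly those where $\ell_j={\tt N}$---but your route through the ${\tt NN}$ Theorem is perfectly adequate and arguably more self-contained given the tools already quoted in the paper.
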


We shall reference 
Theorem \ref{NAN-NAS} by saying that 
``{\tt NAN} is forbidden'' (likewise with {\tt NAS}).

The next theorem is concerned with symmetric matrices over the field $\F = \F_2$.

\begin{thm}\label{AA... over F2}
{\rm \cite[Theorem 2.9]{XMR-Char 2}}
Let $B  \in \F_2^{n \times n}$ be symmetric and 
$\epr(B)=\ell_1\ell_2 \cdots \ell_n$.
Suppose that $\tt AA$ is a subsequence of 
$\ell_1\ell_2 \cdots \ell_{n-1}$.
Then $\epr(B)=\OL{\tt A} \tt AAA\OL{\tt A}$.
\end{thm}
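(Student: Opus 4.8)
The plan is to exploit the fact that, over $\F_2$, the conclusion $\epr(B)=\OL{\tt A}{\tt AAA}\OL{\tt A}$ just says that \emph{every} principal minor of $B$ is nonzero, and that such matrices are extremely rigid. Indeed, if $[\epr(B)]_1=[\epr(B)]_2={\tt A}$, then each diagonal entry is $1$ and each $2\times2$ principal minor equals $B_{ii}B_{jj}-B_{ij}^2=1-B_{ij}$, which is forced to be nonzero and hence makes $B_{ij}=0$; thus $B=I_n$ and $\epr(B)={\tt A}{\tt A}\cdots{\tt A}$. So it suffices to produce two consecutive ${\tt A}$'s in the first two positions of $\epr(B)$ or of a matrix closely related to $B$. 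My strategy has two halves: first push the given consecutive pair ${\tt A}{\tt A}$ rightward until it reaches the end (so that $B$ is nonsingular), and then reflect it to the front using the Inverse Theorem (Theorem~\ref{Inverse Thm}).

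For the rightward push, write the hypothesis as $\ell_k=\ell_{k+1}={\tt A}$ with $k\le n-2$, and reduce the single forward step $\ell_{k+2}={\tt A}$ to the following self-contained claim: there is no symmetric $M\in\F_2^{m\times m}$ with $m\ge3$ all of whose principal minors of orders $m-1$ and $m-2$ are nonzero yet $\det M=0$. Granting this, for any $(k+2)$-subset $\delta$ the submatrix $M=B[\delta]$ has all of its order-$(k+1)$ and order-$k$ principal minors nonzero (by $\ell_{k+1}=\ell_k={\tt A}$ together with the Inheritance Theorem, Theorem~\ref{Inheritance}), so $\det(B[\delta])\ne0$; as $\delta$ was arbitrary this gives $\ell_{k+2}={\tt A}$. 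Iterating the step yields $\ell_j={\tt A}$ for all $j\ge k$, and in particular $\ell_{n-2}=\ell_{n-1}=\ell_n={\tt A}$.

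The heart of the argument is the claim above, which I would prove by strong induction on $m$ using Schur complements (Theorem~\ref{Schur}). Suppose $\det M=0$. If some diagonal entry $M_{ii}=1$, form $S=M/M[\{i\}]$; by the Schur determinant formula, the order-$(m-2)$ and order-$(m-3)$ principal minors of the order-$(m-1)$ matrix $S$ are order-$(m-1)$ and order-$(m-2)$ principal minors of $M$, hence nonzero, while $\det S=\det M/M_{ii}=0$, contradicting the induction hypothesis at order $m-1$. If instead every diagonal entry is $0$, then (as $M$ is not the zero matrix) some off-diagonal entry $M_{ij}=1$, so $M[\{i,j\}]$ is nonsingular; forming $S=M/M[\{i,j\}]$ drops the order by two and, by the same bookkeeping, produces an order-$(m-2)$ singular matrix satisfying the same minor hypotheses, again contradicting the induction hypothesis once $m\ge5$. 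The base cases $m\in\{3,4\}$ I would handle directly: for $m=3$ the hypotheses force $M=I_3$; for $m=4$ with a nonzero diagonal entry one reduces to $m=3$ as above, and with zero diagonal the order-$2$ condition forces $M$ to be the adjacency matrix of $K_4$ over $\F_2$, all of whose order-$3$ principal minors vanish, a contradiction.

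Finally, since $\ell_n={\tt A}$, the matrix $B$ is nonsingular, and the Inverse Theorem gives $\epr(B^{-1})=\ell_{n-1}\ell_{n-2}\cdots\ell_1{\tt A}$; because $k\le n-2$ we have $\ell_{n-1}=\ell_{n-2}={\tt A}$, so the first two entries of $\epr(B^{-1})$ are ${\tt A}{\tt A}$. By the rigidity observation of the first paragraph this forces $B^{-1}=I_n$, whence $B=I_n$ and $\epr(B)={\tt A}{\tt A}\cdots{\tt A}$, which is of the required form $\OL{\tt A}{\tt AAA}\OL{\tt A}$. I expect the main obstacle to be the inductive Schur-complement claim: one must verify the minor-order bookkeeping carefully (each Schur step lowers every relevant order by the size of the pivot block, preserving exactly the ``top-two-orders-nonzero'' pattern) and dispose of the low-order base cases by hand. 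A secondary point to confirm is that ``${\tt A}{\tt A}$ among the first $n-1$ terms'' means a consecutive pair with $k\le n-2$, which is precisely what places the reflected pair in positions $1$ and $2$ rather than straddling the final entry.
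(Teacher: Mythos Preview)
The paper does not supply its own proof of this statement; it is quoted as \cite[Theorem~2.9]{XMR-Char 2} in the ``Results cited'' subsection and used as a black box. So there is no in-paper argument to compare against.

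That said, your argument is correct. The opening observation that over $\F_2$ the condition $[\epr(B)]_1=[\epr(B)]_2={\tt A}$ forces $B=I_n$ is sound (each diagonal entry is $1$, and $1-b_{ij}^2=1-b_{ij}$ over $\F_2$ is nonzero only if $b_{ij}=0$). The Schur-complement induction for the claim ``all order-$(m-1)$ and order-$(m-2)$ principal minors nonzero $\Rightarrow$ $\det M\neq 0$'' is clean: pivoting on a nonsingular $1\times1$ or $2\times2$ principal block sends order-$j$ principal minors of $M$ containing the pivot to order-$(j-1)$ or order-$(j-2)$ principal minors of the Schur complement (Theorem~\ref{Schur}(ii)), so the ``top-two-orders nonzero'' hypothesis descends intact while $\det S=\det M/\det(\text{pivot})=0$. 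The base cases are right: for $m=3$ the order-$1$ hypothesis already makes the diagonal all ones, whence $M=I_3$; for $m=4$ with zero diagonal the order-$2$ hypothesis forces $M=J_4-I_4$, whose order-$3$ minors equal $\det(J_3-I_3)=2=0$ over $\F_2$, contradicting the order-$3$ hypothesis. Your reading of ``${\tt AA}$ is a subsequence of $\ell_1\cdots\ell_{n-1}$'' as a consecutive pair with $k\le n-2$ matches the paper's usage elsewhere. In fact your proof yields the sharper conclusion $B=I_n$, of which $\epr(B)=\OL{\tt A}{\tt AAA}\OL{\tt A}$ is an immediate consequence.
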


The following fact is well-known
(see, for example, \cite{Ramsey-book}).

\begin{thm}\label{triangle-free}
Let $G$ be a graph of order $5$.
If $G$ and $\OL{G}$ are triangle-free,
then $G = C_5$.
\end{thm}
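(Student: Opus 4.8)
The plan is to prove the classical fact that the only self-complementary-like triangle-free graph on five vertices is $C_5$, using a short counting and case argument on degrees. Throughout, write $G$ for the graph of order $5$ and recall that both $G$ and $\OL{G}$ are triangle-free. Since an edge in $G$ is a non-edge in $\OL{G}$ and vice versa, the degree sequences satisfy $\deg_G(v) + \deg_{\OL{G}}(v) = 4$ for every vertex $v$.

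First I would bound the maximum degree. The key observation is that no vertex of $G$ can have degree $3$ or more: if $v$ had three neighbours, then among those three neighbours there can be no edge (else a triangle through $v$ appears in $G$), so those three vertices form an independent set in $G$, hence a triangle in $\OL{G}$, contradicting that $\OL{G}$ is triangle-free. The same argument applied to $\OL{G}$ shows $\deg_{\OL{G}}(v) \le 2$ for all $v$. Combining with $\deg_G(v) + \deg_{\OL{G}}(v) = 4$, I conclude that every vertex satisfies $\deg_G(v) = 2$ (since $\deg_G(v) \le 2$ and $4 - \deg_G(v) = \deg_{\OL{G}}(v) \le 2$ force $\deg_G(v) = 2$).

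Next I would use the fact that $G$ is $2$-regular on $5$ vertices to pin down its structure. A finite $2$-regular graph is a disjoint union of cycles; since $G$ has only $5$ vertices and every cycle has length at least $3$ (the graph being simple and loopless), the only way to partition $5$ vertices into cycles each of length $\ge 3$ is a single cycle of length $5$. Thus $G$ is isomorphic to $C_5$. As a final consistency check one notes that $C_5$ is indeed triangle-free and that its complement is again a $5$-cycle, hence also triangle-free, so the hypotheses are genuinely satisfiable.

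The argument has no real obstacle; the only point requiring a moment of care is the maximum-degree step, where the triangle-free conditions on $G$ and $\OL{G}$ must be invoked in tandem to squeeze the degree to exactly $2$. Everything else is the standard classification of $2$-regular graphs. If a fully self-contained presentation were desired, the phrase ``disjoint union of cycles'' could be replaced by the elementary remark that starting from any vertex and following the two incident edges around forces a closed walk visiting all $5$ vertices exactly once.
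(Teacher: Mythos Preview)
Your argument is correct: the degree-squeeze (no vertex can have three neighbours in $G$ without creating a triangle in $\OL{G}$, and symmetrically) forces $2$-regularity, and a $2$-regular simple graph on five vertices must be $C_5$. This is the standard proof.

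Note, however, that the paper does not actually prove this theorem. It appears in the ``Results cited'' subsection as a well-known fact, with a pointer to the Ramsey-theory reference, and is invoked later in the proof of Lemma~\ref{NAXA...}. So there is no ``paper's own proof'' to compare against; you have supplied a clean self-contained argument where the paper chose to quote the result. One minor quibble: the conclusion in the paper is stated as $G = C_5$, which should be read as ``$G$ is isomorphic to $C_5$'' (and that is what your proof delivers); strict equality of labelled graphs is of course not forced.
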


\subsection{Other preliminary results}
$\null$
\indent

The fact that the rank of any matrix remains invariant after being multiplied by a nonzero constant, or after permuting one of its rows or columns, 
leads to an observation.

\begin{obs}\label{constant and permutation}
Let $\F$ be a field and $B \in \Fnn$ be symmetric.
Let $c \in \F$ be nonzero and 
$P \in \Fnn$ be a permutation matrix. 
Then $cB$ and $P^{\top}BP$ are symmetric,
$\epr(cB) = \epr(B)$ and $\epr(P^{\top}BP)=\epr(B)$.
\end{obs}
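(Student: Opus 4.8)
The plan is to verify the three claimed invariances directly from the definitions, treating each as an elementary consequence of the fact that scaling and permutation conjugation preserve ranks (hence preserve singularity/nonsingularity of every principal submatrix). First I would dispatch the symmetry claims: since $B=B^\top$, we have $(cB)^\top = cB^\top = cB$, and $(P^\top B P)^\top = P^\top B^\top P = P^\top B P$, so both $cB$ and $P^\top B P$ are symmetric and the epr-sequence is in fact defined for them.

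For the scaling statement $\epr(cB)=\epr(B)$, the key observation is that for any $\alpha\subseteq[n]$ with $|\alpha|=j$, the principal submatrix $(cB)[\alpha] = c\,B[\alpha]$, so $\det((cB)[\alpha]) = c^{\,j}\det(B[\alpha])$. Because $c\neq 0$ and we are in a field, $c^{\,j}\neq 0$, and therefore $\det((cB)[\alpha])\neq 0$ if and only if $\det(B[\alpha])\neq 0$. Thus, for each fixed order $j$, the collection of nonzero principal minors of $cB$ is nonempty (respectively, all nonzero) exactly when the corresponding collection for $B$ is, so the $\{\tt A,S,N\}$-classification at position $j$ agrees for $cB$ and $B$. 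Since this holds for every $j\in[n]$, the two epr-sequences coincide.

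For the permutation statement, I would use that conjugation by a permutation matrix $P$ simultaneously reindexes rows and columns: the principal submatrices of $P^\top B P$ are precisely the principal submatrices of $B$, merely relabeled. Concretely, if $\pi$ is the permutation realized by $P$, then for any $\alpha\subseteq[n]$ one has $(P^\top B P)[\alpha] = B[\pi(\alpha)]$ up to a simultaneous permutation of its own rows and columns, which does not change the determinant; hence $\det((P^\top B P)[\alpha]) = \det(B[\pi(\alpha)])$. As $\alpha$ ranges over all $j$-subsets of $[n]$, $\pi(\alpha)$ also ranges over all $j$-subsets of $[n]$, so the multiset of principal minors of order $j$ of $P^\top B P$ equals that of $B$. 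Therefore the position-$j$ classification agrees, and the epr-sequences are equal.

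I do not anticipate a genuine obstacle here, as the statement is essentially a bookkeeping exercise; the only point demanding a little care is the determinant identity $\det((P^\top B P)[\alpha])=\det(B[\pi(\alpha)])$, where one must be explicit that conjugating $B[\pi(\alpha)]$ by the induced permutation of the index set $\alpha$ leaves the determinant unchanged (the induced permutation matrix has determinant $\pm 1$, and it appears on both sides of the conjugation). Once this identity is recorded, the bijective correspondence $\alpha\mapsto\pi(\alpha)$ between $j$-subsets immediately yields the equality of epr-sequences.
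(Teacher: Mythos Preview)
Your argument is correct and is essentially the same approach as the paper's, which in fact offers no proof beyond the single remark preceding the observation that rank is invariant under nonzero scaling and row/column permutation. Your write-up simply spells out what the paper leaves implicit, verifying for each principal submatrix that scaling by $c^j$ and conjugation by a permutation preserve (non)singularity.
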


\begin{obs}\label{ones in first row}
Let $\F$ be a field and 
$B = [b_{ij}] \in \Fnn$ be symmetric.
Let $D=\diag(d_1,d_2,\dots,d_n) \in \Fnn$, 
$M=DBD$ and $M=[m_{ij}]$.
Then the following statements hold:
\ben
\item[$(i)$] $m_{ij}=d_id_jb_{ij}$, for all $i,j \in [n]$.
In particular, if $b_{1j} \neq 0$ for $j =2,3, \dots,n$ and
$D=\diag(1,b_{12}^{-1},b_{13}^{-1}, \dots,b_{1n}^{-1})$, then each off-diagonal entry in 
the first row (and first column) of $M$ is equal to $1$. 
\item[$(ii)$] $M$ is symmetric.
\item[$(iii)$] $\epr(M) = \epr(B)$.
\een
\end{obs}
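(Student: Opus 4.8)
The plan is to dispatch parts $(i)$ and $(ii)$ by direct computation and to reserve the real content for part $(iii)$, which I would reduce to a single determinant identity for principal submatrices.

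For $(i)$, I would expand the triple product using the diagonal structure of $D$: writing $D_{ik} = d_i\delta_{ik}$, one gets $m_{ij} = \sum_{k,\ell} D_{ik} b_{k\ell} D_{\ell j} = d_i b_{ij} d_j$, which is the claimed formula $m_{ij} = d_i d_j b_{ij}$. The ``in particular'' clause is then immediate: with $d_1 = 1$ and $d_j = b_{1j}^{-1}$ for $j \geq 2$ (legitimate since each $b_{1j} \neq 0$), we have $m_{1j} = d_1 d_j b_{1j} = b_{1j}^{-1} b_{1j} = 1$ for every $j \geq 2$, and the first-column claim follows from symmetry. For $(ii)$, I would simply observe $M^\top = (DBD)^\top = D^\top B^\top D^\top = DBD = M$, using that $D$ is diagonal (hence $D^\top = D$) and $B$ is symmetric.

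For $(iii)$, the key observation is that, because $D$ is diagonal, taking principal submatrices commutes with the congruence: for every index set $\gamma \subseteq [n]$ one has $M[\gamma] = D[\gamma]\, B[\gamma]\, D[\gamma]$. Taking determinants and using $\det(D[\gamma]) = \prod_{i \in \gamma} d_i$ yields
\[
\det(M[\gamma]) = \Big(\prod_{i \in \gamma} d_i\Big)^{2} \det(B[\gamma]).
\]
Provided $D$ is nonsingular---which is the relevant case, and in particular holds in the ``in particular'' application where every $d_j \neq 0$---the scalar factor $\big(\prod_{i\in\gamma} d_i\big)^2$ is nonzero, so $\det(M[\gamma]) \neq 0$ if and only if $\det(B[\gamma]) \neq 0$. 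This gives, for each order $k$, a bijection between the index sets of size $k$ yielding nonzero principal minors of $M$ and those yielding nonzero principal minors of $B$; consequently the ``all/some/none'' status at order $k$ is identical for $M$ and $B$, i.e.\ $[\epr(M)]_k = [\epr(B)]_k$ for every $k$, so $\epr(M) = \epr(B)$.

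There is no deep obstacle here; the points requiring care are first justifying the submatrix identity $M[\gamma] = D[\gamma]B[\gamma]D[\gamma]$ cleanly from the diagonality of $D$, and second noting explicitly that the preservation of nonvanishing minors---and hence of the epr-sequence in $(iii)$---requires $D$ to be nonsingular. Once the determinant identity is in hand, the equality of epr-sequences follows order by order from the fact that the nonzero scalar $\big(\prod_{i\in\gamma}d_i\big)^2$ cannot change whether a given principal minor vanishes.
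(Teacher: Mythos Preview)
Your argument is correct and matches the paper's own justification, which simply remarks that (i) is readily verified, (ii) follows from (i), and (iii) holds because multiplying a row or column by a nonzero constant preserves the rank of every submatrix. Your determinant identity $\det(M[\gamma]) = \big(\prod_{i\in\gamma} d_i\big)^{2}\det(B[\gamma])$ makes this explicit, and you rightly flag that (iii) tacitly requires $D$ to be nonsingular---an assumption the paper also uses implicitly (``nonzero constant'') but does not state in the hypothesis.
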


Statement (i) in Observation \ref{ones in first row} is readily verified; 
statement (ii) follows from (i); and 
statement (iii) is a consequence of the fact that 
multiplying any row or column of a matrix by a nonzero constant preserves the rank of each of its submatrices.

The following fact generalizes 
\cite[Theorem 5.1]{EPR} and, to some extent,
\cite[Theorem 8.1]{P},
which provided the idea for our proof.

\begin{prop}\label{AN}
Let $\F$ be a field and $B \in \Fnn$ be symmetric.
Suppose that 
$\epr(B) = {\tt AN}\ell_3\ell_4\cdots\ell_n$.
Then there exists a symmetric matrix $M \in \Fnn$ such that the following statements hold:
\ben
\item $\epr(M) = \epr(B)$
\item Each entry of $M$ is equal to either $-1$ or $1$.
\item Each diagonal entry of $M$ and each entry in the first row and first column of $M$ is equal to $1$.
\een
\end{prop}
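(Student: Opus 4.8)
The plan is to extract two algebraic relations from the first two letters of $\epr(B)$ and then to produce $M$ explicitly as a single diagonal congruence followed by one global rescaling. Since $[\epr(B)]_1 = {\tt A}$, every diagonal entry $b_{ii}$ is nonzero. Since $[\epr(B)]_2 = {\tt N}$, every $2\times 2$ principal minor vanishes, which reads $b_{ii}b_{jj} - b_{ij}^2 = 0$, i.e.
\[
b_{ij}^2 = b_{ii}b_{jj} \qquad (i \neq j).
\]
Specializing to $i = 1$ gives $b_{1j}^2 = b_{11}b_{jj} \neq 0$, so every entry $b_{1j}$ of the first row is nonzero; this is exactly what makes the intended normalization possible.

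Guided by Observation \ref{ones in first row} (normalize the first row) and Observation \ref{constant and permutation} (rescale), I would set $D = \diag(b_{11}^{-1}, b_{12}^{-1}, \dots, b_{1n}^{-1})$, which is well defined by the previous paragraph, and take $M = b_{11}\, DBD$. Observation \ref{ones in first row} gives that $DBD$ is symmetric with $\epr(DBD) = \epr(B)$, and Observation \ref{constant and permutation} shows the scalar multiple preserves symmetry and the epr-sequence; together these yield statement (1) and symmetry of $M$. Writing $M = [m_{ij}]$, I then only need to read off entries: with $d_i$ the $i$th diagonal entry of $D$, we have $m_{ij} = b_{11}\, d_i d_j b_{ij}$.

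The verification of statements (2) and (3) is the computational core. For the diagonal with $i \ge 2$, using $b_{1i}^2 = b_{11}b_{ii}$ one gets $m_{ii} = b_{11}\, b_{1i}^{-2} b_{ii} = b_{11}\,(b_{11}b_{ii})^{-1} b_{ii} = 1$, while $m_{11} = b_{11}\, b_{11}^{-2} b_{11} = 1$; for the first row with $j \ge 2$, $m_{1j} = b_{11}\, b_{11}^{-1} b_{1j}^{-1} b_{1j} = 1$. For $i,j \ge 2$ with $i \neq j$ I would square and use both relations:
\[
m_{ij}^2 = b_{11}^2\, b_{1i}^{-2} b_{1j}^{-2} b_{ij}^2 = b_{11}^2\,(b_{11}b_{ii})^{-1}(b_{11}b_{jj})^{-1}\, b_{ii}b_{jj} = 1,
\]
so $m_{ij} = \pm 1$. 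This gives (2) and (3).

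I expect the only genuine subtlety to be the demand in (3) that the diagonal equal $1$ \emph{exactly}: over an arbitrary field one cannot generally rescale a diagonal entry to $1$ by a congruence, since that would force $b_{ii}$ to be a square. The point of the construction is that no square roots are needed — the single global factor $b_{11}$ together with the congruence by $D$ collapses every diagonal entry to $1$ precisely because the relation $b_{1i}^2 = b_{11}b_{ii}$ (the ${\tt N}$ in position $2$) cancels the would-be square. Thus the ${\tt N}$ condition is invoked twice: once to guarantee that the first row has no zero entry (so $D$ exists), and once to force the diagonal to $1$. Everything goes through over any field, including characteristic $2$, where $m_{ij}^2 = 1$ simply forces $m_{ij} = 1$.
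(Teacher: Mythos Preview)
Your proof is correct and follows essentially the same approach as the paper: both construct the very same matrix $M$ as a diagonal congruence of $B$ followed by a scalar multiple (your $b_{11}\,DBD$ equals the paper's $D'(b_{11}^{-1}B)D'$ with $D' = b_{11}D$). The only cosmetic difference is that the paper verifies the entries of $M$ by invoking $[\epr(M)]_2 = {\tt N}$ on the already-normalized matrix, whereas you carry the relation $b_{ij}^2 = b_{ii}b_{jj}$ from $B$ through the explicit formula $m_{ij} = b_{11}d_id_jb_{ij}$; both arguments are equally short.
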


\bpf
It is readily verified that the assumption that 
$[\epr(B)]_1^2 = \tt AN$ implies that 
each entry of $B$ is nonzero.
Let $B=[b_{ij}]$, $C=b_{11}^{-1}B$ and $C=[c_{ij}]$.
It follows, then, that each entry of $C$ is nonzero;
in particular, observe that $c_{11} = 1$.
Let $D=\diag(c_{11}^{-1},c_{12}^{-1}, \dots,c_{1n}^{-1})$, $M=DCD$ and $M=[m_{ij}]$.
Observe that $M$ is symmetric.
By Observations \ref{constant and permutation} and \ref{ones in first row}, $\epr(M) = \epr(C) = \epr(B)$.

By Observation \ref{ones in first row}, 
$m_{11}=c_{11}^{-1}=1$ and 
each off-diagonal entry in 
the first row and first column of $M$ is $1$, as desired.
As $[\epr(M)]_2 = \tt N$, 
$0 =\det(M[\{1,j\}]) = m_{jj}-1$, for $j = 2,3, \dots,n$.
Thus, each diagonal entry of $M$ is $1$, as desired.

Then, as $[\epr(M)]_2 = \tt N$, 
$0 =\det(M[\{i,j\}]) = 1-m_{ij}^2$, 
for all $i,j \in [n]$ with $i \neq j$.
Hence, each off-diagonal entry of $M$ is 
either $-1$ or $1$.
\epf

Proposition \ref{AN} implies that to
find an $n \times n$ symmetric matrix 
(over a given field $\F$) 
whose epr-sequence is of the form 
${\tt AN}\ell_3\ell_4 \cdots \ell_n$
(if it exists), it suffices to search among the subset of symmetric matrices in $\Fnn$ all of whose entries 
are either $-1$ or $1$ and whose entries in 
the first row (and first column) and the diagonal are
all equal to $1$.

We draw upon the ideas of \cite[Theorem 2.2]{P} to make the following observation.

\begin{obs}\label{Jn-kIn}
Let $n \geq 1$ be an integer, 
$\F$ be a field and $k \in \F$ be nonzero.
Over the field $\F$, 
$\det(J_n - kI_n) = (-k)^{n-1}(n-k)$.
\end{obs}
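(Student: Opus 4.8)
The plan is to evaluate the determinant by elementary row operations that are valid over an arbitrary field, so as to avoid any eigenvalue or diagonalization argument. (Over $\R$ or $\C$ one could instead note that $J_n$ has spectrum $n$ once and $0$ with multiplicity $n-1$, so $J_n - kI_n$ has eigenvalues $n-k$ once and $-k$ with multiplicity $n-1$, giving the product $(n-k)(-k)^{n-1}$; but making this rigorous over a general field would require a reduction-to-polynomial-identity argument.) Writing $M = J_n - kI_n$, every off-diagonal entry of $M$ equals $1$ and every diagonal entry equals $1-k$.

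First I would add rows $2,3,\ldots,n$ to the first row; this leaves the determinant unchanged. Since each column of $M$ has exactly one diagonal entry $1-k$ and $n-1$ off-diagonal entries equal to $1$, every column sum is $(1-k)+(n-1) = n-k$, so the new first row is $(n-k, n-k, \ldots, n-k)$. Factoring the scalar $n-k$ out of the first row then gives $\det(M) = (n-k)\det(M')$, where $M'$ agrees with $M$ except that its first row is the all-ones row $\bone^\top$.

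Next I would subtract the (all-ones) first row of $M'$ from each of rows $2,\ldots,n$; again the determinant is unchanged. For $i \geq 2$, row $i$ of $M'$ has a $1$ in every position except a $1-k$ on the diagonal, so after the subtraction it has a $0$ in every off-diagonal position and $-k$ on the diagonal. The resulting matrix is upper triangular with diagonal entries $1, -k, -k, \ldots, -k$, whence its determinant is $(-k)^{n-1}$. Combining these steps yields $\det(M) = (n-k)(-k)^{n-1}$, as claimed.

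There is no genuine obstacle here: the argument is purely a bookkeeping computation, and its only salient feature is that each step used (adding a multiple of one row to another, scaling a row, and reading off a triangular determinant) is legitimate over any field, so neither a case distinction on $\chr(\F)$ nor any appeal to the invertibility of $k$ is needed. Indeed the identity even holds at $k=0$, and the base case $n=1$ is immediate, since $\det([\,1-k\,]) = 1-k = (-k)^{0}(1-k)$.
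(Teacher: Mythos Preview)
Your argument is correct. The row-reduction you describe---adding all rows to the first to produce a constant row $n-k$, extracting that scalar by multilinearity, then subtracting the resulting all-ones row from the remaining rows to reach an upper-triangular matrix with diagonal $(1,-k,\ldots,-k)$---is valid over any field, and the edge case $n-k=0$ causes no trouble since multilinearity handles it automatically.

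As for comparison: the paper does not actually supply a proof of this observation. It is stated as a known fact, with a pointer to the ideas of \cite[Theorem~2.2]{P}. Your write-up therefore fills in a detail the paper leaves implicit, and your choice to avoid an eigenvalue argument (and the attendant reduction-to-a-polynomial-identity step needed to transport it to arbitrary fields) is exactly the right call for this level of generality.
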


We note that, 
as each $i \times i$ principal submatrix of $J_n-kI_n$
(for a given $i \in [n]$) is of the form $J_i-kI_i$,
all of the order-$i$ principal minors of 
$J_n-kI_n$ are equal,
implying that the epr-sequence of $J_n-kI_n$ does not have any $\tt S$ terms.
Thus, 
the next two facts follow from Observation \ref{Jn-kIn}
(the first of which generalizes \cite[Proposition 2.17]{EPR}).

\begin{prop}\label{Jn-In}
Let $\F$ be a field of characteristic $p > 0$, 
$J_n-I_n \in \Fnn$ and
$\epr(J_n-I_n) = \ell_1\ell_2 \cdots \ell_n$.
Then, for all $i \in [n]$, $\ell_i \in \{\tt A,N\}$.
Moreover, for all $i \in [n]$,
$\ell_i = \tt N$ if and only if $i \equiv 1 \pmod{p}$. 
\end{prop}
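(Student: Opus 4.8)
The plan is to apply Observation~\ref{Jn-kIn} with $k=1$ and translate the resulting determinant formula into the language of epr-sequences. First I would invoke the Observation to obtain, over $\F$, that each order-$i$ principal minor of $J_n - I_n$ equals $\det(J_i - I_i) = (-1)^{i-1}(i-1)$. As noted in the remark preceding the proposition, every order-$i$ principal submatrix of $J_n - I_n$ is exactly $J_i - I_i$, so all order-$i$ principal minors coincide; hence $\ell_i \in \{\tt A, \tt N\}$ for every $i$, with $\ell_i = \tt N$ precisely when this common minor vanishes and $\ell_i = \tt A$ otherwise. This immediately gives the first claim.

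Next I would determine exactly when the common minor $(-1)^{i-1}(i-1)$ is zero in $\F$. Since $(-1)^{i-1}$ is a unit, the minor vanishes if and only if $i - 1 \equiv 0 \pmod{p}$, i.e.\ $i \equiv 1 \pmod{p}$, where $p = \chr(\F)$. This is the content of the ``moreover'' clause: $\ell_i = \tt N$ iff $i \equiv 1 \pmod p$. I would be careful to phrase the vanishing condition in terms of the image of the integer $i-1$ under the canonical ring homomorphism $\ZZ \to \F$, which sends $i-1$ to $0$ exactly when $p \mid (i-1)$.

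The only genuine subtlety to handle carefully is the boundary case $i = 1$: here the minor is $(-1)^0 \cdot 0 = 0$, so $\ell_1 = \tt N$, and indeed $1 \equiv 1 \pmod p$, so the stated characterization is consistent at $i=1$. I would check this explicitly to confirm the formula is not merely vacuously correct at the first index. Beyond this, the argument is essentially a one-line consequence of Observation~\ref{Jn-kIn} together with the already-stated fact that equal principal minors force the letter to be $\tt A$ or $\tt N$; the main (and only real) obstacle is making sure the reduction of $(-1)^{i-1}(i-1)$ modulo $p$ is stated cleanly, namely that the sign is irrelevant and the vanishing is governed solely by the divisibility $p \mid (i-1)$.
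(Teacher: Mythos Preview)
Your proposal is correct and follows exactly the approach the paper indicates: the paper does not write out a proof but simply notes that all order-$i$ principal submatrices of $J_n - kI_n$ equal $J_i - kI_i$ (so no $\tt S$ can occur) and then states that the proposition follows from Observation~\ref{Jn-kIn}. Your write-up just unpacks this, specializing to $k=1$ and observing that $(-1)^{i-1}(i-1)$ vanishes in $\F$ iff $p \mid (i-1)$.
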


\begin{prop}\label{Jn-2In}
Let $\F$ be a field of characteristic $p > 2$, 
$J_n-2I_n \in \Fnn$ and
$\epr(J_n-2I_n) = \ell_1\ell_2 \cdots \ell_n$.
Then, for all $i \in [n]$, $\ell_i \in \{\tt A,N\}$.
Moreover, for all $i \in [n]$,
$\ell_i = \tt N$ if and only if $i \equiv 2 \pmod{p}$. 
\end{prop}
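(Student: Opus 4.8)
The plan is to mirror the proof of Proposition~\ref{Jn-In}, replacing the coefficient $k=1$ by $k=2$ and invoking the hypothesis $p>2$ exactly where it is needed. The starting point is the remark immediately following Observation~\ref{Jn-kIn}: for any $i \in [n]$, every $i \times i$ principal submatrix of $J_n - 2I_n$ is itself of the form $J_i - 2I_i$, so all order-$i$ principal minors coincide. Consequently no order $i$ can produce an $\tt S$ term, and hence $\ell_i \in \{\tt A,N\}$ for every $i$. This disposes of the first assertion immediately.

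For the second assertion I would compute the common value of the order-$i$ principal minors directly. By Observation~\ref{Jn-kIn} applied with $k=2$ (which is a nonzero element of $\F$ precisely because $\chr \F = p > 2$), one obtains $\det(J_i - 2I_i) = (-2)^{i-1}(i-2)$ over $\F$. Since $2 \neq 0$ in $\F$, the factor $(-2)^{i-1}$ is a unit, so this minor vanishes if and only if the field element $i-2$ equals $0$.

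Finally I would translate the condition ``$i-2 = 0$ in $\F$'' into a congruence: the integer $i-2$, reduced into $\F$, is zero exactly when $p$ divides $i-2$, that is, when $i \equiv 2 \pmod{p}$. Combining this with the first paragraph yields $\ell_i = {\tt N}$ if and only if $i \equiv 2 \pmod{p}$, and $\ell_i = {\tt A}$ otherwise, as claimed. I do not anticipate a genuine obstacle here; the only point requiring care is that the hypothesis $p > 2$ is exactly what guarantees $(-2)^{i-1} \neq 0$, so that the power-of-$(-2)$ factor cannot independently force the minor to vanish. This is precisely why the present proposition, unlike Proposition~\ref{Jn-In}, must exclude characteristic $2$.
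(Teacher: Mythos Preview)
Your proposal is correct and follows exactly the approach the paper itself takes: the paper states (immediately before Propositions~\ref{Jn-In} and~\ref{Jn-2In}) that every $i \times i$ principal submatrix of $J_n - kI_n$ is $J_i - kI_i$, so no $\tt S$ terms occur, and then asserts that both propositions follow directly from Observation~\ref{Jn-kIn}. Your write-up simply makes explicit the short computation the paper leaves implicit, including the one subtlety that $p>2$ is needed so that $k=2$ is a nonzero element and the factor $(-2)^{i-1}$ is a unit.
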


We draw upon the ideas of \cite[Proposition 2.5]{XMR-Classif} to establish the next fact.

\begin{prop}\label{ANA...}
Let $\F$ be a field of characteristic $p>2$ and
$B \in \Fnn$ be symmetric.
Suppose that
$\epr(B) = {\tt ANA}\ell_4 \ell_5 \cdots \ell_n$.
Then, for $i=4,5,\dots,n$, $\ell_i \in \{\tt A,N\}$.
Moreover, for $i=4,5,\dots,n$,
$\ell_i = \tt N$ if and only if $i \equiv 2 \pmod{p}$. 
\end{prop}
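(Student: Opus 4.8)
The plan is to argue that the hypotheses pin $B$ down, up to epr-preserving operations, to a scalar multiple of $J_n - 2I_n$, and then to invoke Proposition~\ref{Jn-2In}. The starting point is that $[\epr(B)]_1^2 = {\tt AN}$, so Proposition~\ref{AN} applies: I may replace $B$ by a symmetric matrix $M = [m_{ij}]$ with $\epr(M) = \epr(B)$, every entry equal to $\pm 1$, and every diagonal entry as well as every entry of the first row and first column equal to $1$. The only entries not yet determined are the $m_{ij}$ with $i \neq j$ and $i, j \in \{2, \dots, n\}$, each of which lies in $\{-1, 1\}$.

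Next I would use the hypothesis $\ell_3 = {\tt A}$ to fix these entries. For distinct $i, j \geq 2$, the order-$3$ principal submatrix on $\{1, i, j\}$ has determinant $\det\begin{bmatrix} 1 & 1 & 1 \\ 1 & 1 & m_{ij} \\ 1 & m_{ij} & 1\end{bmatrix} = -(m_{ij} - 1)^2$. Since $\ell_3 = {\tt A}$ forces this to be nonzero, and since $\chr \F = p > 2$ guarantees $-1 \neq 1$, I conclude $m_{ij} = -1$ for all distinct $i, j \geq 2$. Hence $M$ has $1$'s on the diagonal and throughout the first row and column, and $-1$ in every other off-diagonal position.

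The decisive step is then a diagonal congruence. Conjugating $M$ by $D = \diag(1, -1, -1, \dots, -1)$ flips the signs of the first row and column (sending their off-diagonal $1$'s to $-1$) while fixing the diagonal at $1$ and keeping the other off-diagonal entries at $-1$; thus $DMD = 2I_n - J_n = -(J_n - 2I_n)$. By Observation~\ref{ones in first row} this congruence preserves the epr-sequence, and by Observation~\ref{constant and permutation} so does the scaling by $-1$, whence $\epr(B) = \epr(M) = \epr(J_n - 2I_n)$. Applying Proposition~\ref{Jn-2In} yields, for all $i \in [n]$ (in particular for $i = 4, \dots, n$), that $\ell_i \in \{{\tt A}, {\tt N}\}$ and that $\ell_i = {\tt N}$ exactly when $i \equiv 2 \pmod{p}$, which is the desired conclusion.

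I expect the only real obstacle is recognizing that the problem collapses to $J_n - 2I_n$: the normalization from Proposition~\ref{AN} does most of the bookkeeping, after which the content is seeing that position~$3$ being ${\tt A}$ forces all remaining off-diagonal entries to be $-1$, and that the resulting sign pattern is diagonally congruent to $J_n - 2I_n$. Should one prefer to avoid the congruence, an alternative is to compute the order-$k$ principal minors of $M$ directly, splitting them into those containing index $1$ (evaluated via the Schur complement of Theorem~\ref{Schur} together with Observation~\ref{Jn-kIn}) and those avoiding index $1$ (evaluated via Observation~\ref{Jn-kIn} applied to $2I_k - J_k$), and checking that both families equal $-2^{k-1}(k-2)$; this works but demands careful sign tracking that the congruence argument avoids.
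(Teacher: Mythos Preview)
Your proposal is correct and follows essentially the same route as the paper: normalize via Proposition~\ref{AN}, use $\ell_3={\tt A}$ on the minors $\det(M[\{1,i,j\}])=-(m_{ij}-1)^2$ to force $m_{ij}=-1$ for $i,j\ge 2$, and then pass to $J_n-2I_n$ by a scaling together with a sign flip on the first row and column. The only cosmetic difference is the order of those two epr-preserving operations (the paper multiplies by $-1$ first and then flips, arriving directly at $J_n-2I_n$, whereas you conjugate first and then scale).
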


\bpf
Because of Proposition \ref{Jn-2In},
it suffices to show that $\epr(B) = \epr(J_n - 2I_n)$.
Because of Proposition \ref{AN}, we may assume,
without loss of generality, that each entry of $B$ is 
equal to either $-1$ or $1$, and that
each entry in the first row (and first column) and the diagonal of $B$ is equal to $1$.
We now show that 
\begin{equation}\label{ANA proof equation}
B=
\left(
\begin{array}{c|c}
1 & J_{1,n-1} \\
\hline
J_{n-1,1} & 2I_{n-1}-J_{n-1}
\end{array}
\right),
\end{equation}
by showing that each off-diagonal entry of the 
$(n-1)\times(n-1)$ submatrix $B[\{2,3,\dots,n\}]$ is
equal to $-1$.
Let $i,j \in \{2,3,\dots,n\}$ with $i \neq j$. 
Observe that $\det(B[\{1,i,j\}]) = -(b_{ij} - 1)^2$.
Then, as $[\epr(B)]_3 = \tt A$, 
and because $b_{ij} \in \{-1,1\}$, $b_{ij} = -1$.
Thus, (\ref{ANA proof equation}) holds.
Let $M$ be the (symmetric) matrix that results from multiplying both the first row and first column of $-B$ by $-1$.
As multiplying a row or a column of a matrix by a nonzero constant leaves the rank of each of its submatrices invariant, $\epr(M)=\epr(-B)$.
Observe that $M=J_n-2I_n$.
Then, as $\epr(-B) = \epr(B)$ 
(see Observation \ref{constant and permutation}), 
$\epr(B) = \epr(M) = \epr(J_n-2I_n)$. 
\epf

\section{Ramsey theory}\label{s:Ramsey}
$\null$
\indent
In this section, a connection between Ramsey theory and  epr-sequences of symmetric matrices over finite fields is established and used to obtain results for epr-sequences over finite fields.
The key idea is to interpret off-diagonal entries as edge colors.  In this way, Ramsey numbers give information on epr-sequences of sufficiently large matrices over finite fields.

We shall exploit the following observation in this section.

\begin{obs}\label{Ramsey idea}
Let $\F$ be a field,
$B \in \F^{n\times n}$ be symmetric and $B=[b_{ij}]$. 
Let $T \subseteq \F$ be finite and nonempty.
Suppose that, for all $i,j \in [n]$, 
$b_{ij} \in T$ if $i \neq j$.
If  $n \geq R_{|T|}(k)$,
then there exists $c \in T$ such that $B$ has a 
$k \times k$ principal submatrix all of whose 
off-diagonal entries are equal to $c$.
\end{obs}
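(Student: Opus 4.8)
The plan is to recognize this as a direct application of the multicolor Ramsey number $R_{|T|}(k)$ to a complete graph whose edges are colored by the off-diagonal entries of $B$. First I would set $m := |T|$ and fix an enumeration $T = \{c_1, c_2, \dots, c_m\}$ of the (finitely many, nonempty) allowed off-diagonal values. I would then build a complete graph $K_n$ on the vertex set $[n]$ and define an $m$-coloring of its edges as follows: to the edge $\{i,j\}$ (with $i \neq j$) assign the color corresponding to the common value $b_{ij} = b_{ji} \in T$, which is well-defined precisely because $B$ is symmetric and, by hypothesis, every off-diagonal entry lies in $T$.

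Next I would invoke the definition of the multicolor Ramsey number. Since $n \geq R_{m}(k)$, any $m$-coloring of the edges of $K_n$ induces a monochromatic clique of order $k$. Applying this to the coloring just constructed yields a set $\alpha \subseteq [n]$ with $|\alpha| = k$ such that every edge within $\alpha$ receives the same color, say the color associated with $c \in T$. Translating back from the edge-coloring to the matrix, this means that $b_{ij} = c$ for all distinct $i, j \in \alpha$. The principal submatrix $B[\alpha]$ is then the desired $k \times k$ principal submatrix all of whose off-diagonal entries equal $c$, which completes the argument.

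The one point requiring a little care, rather than a genuine obstacle, is confirming that the edge-coloring is well-defined and total: every pair $\{i,j\}$ with $i \neq j$ must receive exactly one color. Symmetry of $B$ guarantees $b_{ij} = b_{ji}$, so the color does not depend on the orientation of the edge, and the hypothesis that $b_{ij} \in T$ for all $i \neq j$ guarantees that a color is always available. I would also note explicitly that $R_m(k)$ exists for all inputs (as recalled in the introduction via \cite{Ramsey-book}), so the hypothesis $n \geq R_{|T|}(k)$ is meaningful. No diagonal entries play any role here, which is why the conclusion concerns only the off-diagonal structure of $B[\alpha]$; the whole proof is essentially a dictionary translation between symmetric matrices with restricted off-diagonal entries and edge-colored complete graphs, so I expect it to be short once the coloring is set up correctly.
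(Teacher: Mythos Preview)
Your proof is correct. The paper actually states this as an observation without proof; the intended argument is exactly the one you give, as signaled by the opening sentence of Section~\ref{s:Ramsey} (``The key idea is to interpret off-diagonal entries as edge colors'').
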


We start by focusing on 
epr-sequences whose first term is $\tt N$.
Observe that, for a given matrix $B$ with 
$\epr(B) = \ell_1\ell_2\cdots\ell_n$ and
$\pr(B) = r_0r_1\cdots r_n$, 
$\ell_1 = \tt N$ if and only if $r_0r_1 = 10$;
and that, for $j=2,3,\dots,n$, 
$\ell_j = \tt N$ if and only if $r_j = 0$.
A characterization of the pr-sequences of symmetric matrices over fields of characteristic $2$ was established in \cite[Theorem 3.1]{BIRS13}, 
from which the following fact follows immediately:

\begin{thm}\label{epr with char 2}
Let $\F$ be a field of characteristic $2$ and
$B\in\Fnn$ be symmetric with 
$\epr(B)={\tt N}\ell_2 \ell_3\cdots \ell_n$.
If $j \in [n]$ is odd, then $\ell_j = \tt N$.
\end{thm}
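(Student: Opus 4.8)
The plan is to exploit the structural consequence of $\ell_1 = {\tt N}$ over characteristic $2$. First I would observe that $[\epr(B)]_1 = {\tt N}$ forces every diagonal entry of $B$ to vanish, since the order-$1$ principal minors are exactly the diagonal entries. Combined with the symmetry of $B$ and the fact that $\chr(\F)=2$ (so $-1=1$), this makes $B$ an \emph{alternating} matrix: it satisfies $B^\top = -B$ and has zero diagonal, equivalently $x^\top B x = 0$ for every $x \in \F^n$.

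Next I would note that the alternating property is inherited by principal submatrices: for any $\alpha \subseteq [n]$ the matrix $B[\alpha]$ is again symmetric with zero diagonal, hence alternating. Consequently, to establish that $\ell_j = {\tt N}$ for odd $j$, it suffices to prove that \emph{every alternating matrix of odd order is singular}; this yields $\det(B[\alpha]) = 0$ for all $\alpha$ with $|\alpha| = j$ odd.

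The key step, and the main obstacle, is to show that an alternating matrix has even rank over an arbitrary field. The naive argument $\det(A) = \det(A^\top) = \det(-A) = (-1)^n \det(A)$ collapses when $\chr(\F)=2$, so I would instead argue via the associated bilinear form $\langle x, y\rangle = x^\top A y$, which is alternating by construction. Its radical $W = \{x : \langle x, y\rangle = 0\ \forall y\}$ equals $\ker A$ (since $A^\top = -A$), so $\dim(\F^n / W) = \rank(A)$, and the induced form on $\F^n/W$ is non-degenerate and alternating. I would then build a symplectic basis by repeatedly splitting off hyperbolic planes: choosing $e \neq 0$, non-degeneracy supplies $f$ with $\langle e, f\rangle = 1$; the span $\SPAN\{e,f\}$ is a non-degenerate plane, and the form restricts non-degenerately to its orthogonal complement, giving $\F^n/W = \SPAN\{e,f\} \oplus \SPAN\{e,f\}^\perp$. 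Induction on dimension then forces $\rank(A) = \dim(\F^n/W)$ to be even.

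Finally, I would assemble the pieces: for odd $j$, every $j \times j$ principal submatrix $B[\alpha]$ is alternating of odd order, hence has even rank at most $j-1 < j$, so it is singular and $\det(B[\alpha]) = 0$. Since this holds for all such $\alpha$, none of the order-$j$ principal minors is nonzero, giving $\ell_j = {\tt N}$. (Alternatively one could invoke the Pfaffian identity $\det A = \mathrm{Pf}(A)^2$ together with $\mathrm{Pf}(A) = 0$ in odd order, but the symplectic decomposition keeps the argument self-contained and avoids importing that machinery.)
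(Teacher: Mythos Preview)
Your argument is correct. The reduction to alternating matrices is exactly right: $\ell_1={\tt N}$ kills the diagonal, and over characteristic $2$ a symmetric zero-diagonal matrix is alternating, a property inherited by every principal submatrix. Your symplectic-basis argument cleanly establishes that a non-degenerate alternating form lives on an even-dimensional space, so every alternating matrix has even rank and hence every odd-order principal submatrix of $B$ is singular.

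However, your approach differs from the paper's in a basic way: the paper does not supply a proof at all. It simply observes that the statement follows immediately from the characterization of pr-sequences of symmetric matrices over fields of characteristic $2$ given in \cite[Theorem~3.1]{BIRS13}. That cited theorem already encodes the fact that a zero-diagonal symmetric matrix over such a field has no nonzero principal minors of odd order, so the paper treats Theorem~\ref{epr with char 2} as a direct corollary. What you have written is, in effect, a self-contained reproof of the relevant piece of that cited result. The tradeoff is clear: the paper's route is one sentence long but outsources the content, while yours is longer but stands on its own and makes transparent \emph{why} odd orders force vanishing---namely, the even-rank constraint on alternating forms. Your parenthetical remark about the Pfaffian is also a valid shortcut and is closer in spirit to how such facts are often cited.
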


With $p:=2$ (the characteristic of $\F$ in 
Theorem \ref{epr with char 2}),
the conclusion of 
Theorem \ref{epr with char 2} may be stated as follows: 
If $j \in [n]$ and $j \equiv 1 \pmod{p}$,
then $\ell_j = \tt N$;
something that is somewhat reminiscent of that can be said for finite fields:
Our next theorem leads to the following assertion:
If $B \in \F_q^{n \times n}$,
where $p$ is a prime and $q$ is a power of $p$, 
is a symmetric matrix whose epr-sequence starts with $\tt N$, then there exists an integer $m$ such that the following statement holds if $n \geq R_q(m)$: 
If $j \in [m]$ and $j \equiv 1 \pmod{p}$,
then $\ell_j \in \{\tt N,S\}$.

\begin{thm}\label{N... over Fq with Ramsey}
Let $p$ be a prime, $q$ be a power of $p$ and 
$B \in \F_q^{n \times n}$ be symmetric with
$\epr(B) = {\tt N}\ell_2\ell_3 \cdots \ell_n$.
Suppose that $n \geq R_{q}(kp+1)$,
for some positive integer $k$.
Then, for $i=1,2, \dots, k$, 
$\ell_{ip+1} \in \{\tt N,S\}$.
\end{thm}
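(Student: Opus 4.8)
The plan is to use the multicolor Ramsey guarantee to extract a large principal submatrix with especially rigid structure, and then to read off the desired ${\tt N}$ positions from that submatrix using the already-established $\epr$-sequence of $J_n - I_n$.

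First I would record that the hypothesis $\ell_1 = {\tt N}$ forces every diagonal entry of $B$ to vanish, since the $1\times 1$ principal minors are precisely the diagonal entries. Next, viewing each off-diagonal entry $b_{ij}$ as the colour of the edge $\{i,j\}$ of $K_n$ (a legitimate colouring with palette $\F_q$, since $B$ is symmetric with entries in $\F_q$), I would apply Observation \ref{Ramsey idea} with $T = \F_q$, so that $|T| = q$. Because $n \geq R_q(kp+1)$, this yields a $(kp+1)\times(kp+1)$ principal submatrix $M$ of $B$ all of whose off-diagonal entries equal a single constant $c \in \F_q$. Combined with the vanishing diagonal, this gives $M = c\,(J_m - I_m)$, where $m := kp+1$.

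The core of the argument is then to locate the ${\tt N}$ terms of $\epr(M)$. If $c \neq 0$, Observation \ref{constant and permutation} gives $\epr(M) = \epr(J_m - I_m)$, and since $\F_q$ has characteristic $p$, Proposition \ref{Jn-In} tells me that $[\epr(M)]_j = {\tt N}$ exactly when $j \equiv 1 \pmod{p}$. For each $i \in \{1,\dots,k\}$ the index $j = ip+1$ satisfies $ip+1 \equiv 1 \pmod{p}$ and $ip+1 \leq kp+1 = m$, whence $[\epr(M)]_{ip+1} = {\tt N}$; that is, every order-$(ip+1)$ principal minor of $M$ vanishes. (The case $c = 0$ is handled separately and trivially: there $M$ is the zero matrix, so all of its principal minors vanish.)

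Finally I would transfer this downward: any $(ip+1)\times(ip+1)$ principal submatrix of $M$ is itself a principal submatrix of $B$, and it is singular, so $B$ has a zero principal minor of order $ip+1$; hence $\ell_{ip+1} \neq {\tt A}$, i.e.\ $\ell_{ip+1} \in \{{\tt N},{\tt S}\}$, as required. I do not expect a serious obstacle here. The only subtlety is the colour $c = 0$, which must be excised because $c\,(J_m - I_m)$ is then not a nonzero scalar multiple of $J_m - I_m$, so neither Observation \ref{constant and permutation} nor Proposition \ref{Jn-In} apply; but in that case $M$ is the zero matrix and the conclusion is immediate. The real content of the proof is the recognition that the Ramsey-monochromatic submatrix, once combined with the zero diagonal forced by the leading ${\tt N}$, is exactly a scalar multiple of $J_m - I_m$, whose pattern of ${\tt N}$ terms is already known.
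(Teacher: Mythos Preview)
Your proposal is correct and follows essentially the same approach as the paper's proof: both use Observation~\ref{Ramsey idea} to extract a principal submatrix of the form $c(J_{kp+1}-I_{kp+1})$ from the zero-diagonal matrix $B$, then observe that its order-$(ip+1)$ principal minors vanish, forcing $\ell_{ip+1}\neq{\tt A}$. The only cosmetic difference is that the paper computes $\det(c(J_{ip+1}-I_{ip+1}))=c^{ip+1}(-1)^{ip}(ip)=0$ directly via Observation~\ref{Jn-kIn} (which handles $c=0$ and $c\neq 0$ uniformly), whereas you route through Proposition~\ref{Jn-In} and treat $c=0$ separately.
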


\bpf
Let $i \in [k]$.
As $n \geq R_{q}(kp+1)$, 
and because each diagonal entry of $B$ is $0$,
Observation \ref{Ramsey idea} implies that
there exists $c \in \F_3$ such that 
$B$ has a $(kp+1) \times (kp+1)$ principal submatrix  all of whose off-diagonal entries are equal to $c$;
as each diagonal entry of $B$ is $0$, 
this submatrix is $c(J_{kp+1}-I_{kp+1})$.
Let $\alpha \subseteq [n]$ be such that
$B[\alpha] = c(J_{kp+1}-I_{kp+1})$.
Observe that each $(ip+1) \times (ip+1)$ principal submatrix of $B[\alpha]$ is of the form 
$c(J_{ip+1}-I_{ip+1})$,
whose determinant is $c^{ip+1}(-1)^{ip}(ip) = 0$
(see Observation \ref{Jn-kIn}).
Then, as each principal submatrix of $B[\alpha]$ is 
also a principal submatrix of $B$, 
$\ell_{ip+1} \neq \tt A$.
\epf

The following is a corollary to 
Theorem \ref{N... over Fq with Ramsey}, and 
it follows from the fact that
$R_3(4)=R(4,4,4) \leq 230$
(see, for example, \cite{Ramsey-survey}).

\begin{cor}
Let $B \in \F_3^{n \times n}$ be symmetric with
$\epr(B) = {\tt N}\ell_2\ell_3 \cdots \ell_n$.
Suppose that $n \geq 230$.
Then $\ell_4 \in \{\tt N,S\}$.
\end{cor}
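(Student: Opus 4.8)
The plan is to recognize the statement as an immediate specialization of Theorem \ref{N... over Fq with Ramsey}, so that the entire proof reduces to matching up parameters. Since $\F_3$ has characteristic $3$, I would set $p = 3$ and $q = 3$ in that theorem. Because I want a conclusion about the single term $\ell_4$, I observe that $4 = 1\cdot p + 1$ when $p = 3$; this suggests taking $k = 1$, so that the relevant index in the theorem's conclusion is $i = 1$ and $kp + 1 = 4$.

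With these choices in place, I would verify that the hypothesis of Theorem \ref{N... over Fq with Ramsey} is satisfied. That hypothesis demands $n \geq R_{q}(kp+1) = R_3(4)$. Here I would invoke the (external, cited) numerical bound $R_3(4) = R(4,4,4) \leq 230$ taken from \cite{Ramsey-survey}, so that the standing assumption $n \geq 230$ gives $n \geq 230 \geq R_3(4)$, and the hypothesis holds. Note also that $n \geq 230 \geq 4$, so the term $\ell_4$ is indeed present in the sequence.

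Applying Theorem \ref{N... over Fq with Ramsey} with $p=q=3$ and $k=1$, its conclusion for $i = 1$ states precisely that $\ell_{ip+1} = \ell_{1\cdot 3 + 1} = \ell_4 \in \{{\tt N},{\tt S}\}$, which is the desired claim. The only genuinely nontrivial ingredient in this argument is the numerical Ramsey estimate $R(4,4,4) \le 230$; everything else is bookkeeping of the parameters $p$, $q$, $k$, and $i$, and so I expect no real obstacle beyond confirming that this one cited bound is what converts the general Ramsey hypothesis into the explicit threshold $n \geq 230$.
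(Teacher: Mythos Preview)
Your argument is correct and matches the paper's own proof exactly: the corollary is obtained by specializing Theorem~\ref{N... over Fq with Ramsey} with $p=q=3$ and $k=1$, and invoking the bound $R_3(4)=R(4,4,4)\le 230$ from \cite{Ramsey-survey}.
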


We shall now consider the case in which the 
epr-sequence of the (symmetric) matrix $B$ in 
Theorem \ref{N... over Fq with Ramsey} starts with 
$\tt NA$; 
observe that, in that case, $B$ is not only 
a matrix all of whose diagonal entries are zero but also 
a matrix all of whose off-diagonal entries are nonzero.
We start by making an observation related to the third term in the epr-sequence of such a matrix.
If $B=[b_{ij}]$ is a symmetric matrix whose epr-sequence starts with $\tt NA$,
then $\det(B[\{p,q,r\}]) = 
2b_{pq}b_{pr}b_{qr}$ (if $p<q<r$).
The following observation, then, follows immediately.

\begin{obs}\label{NA... and ell_3}
Let  $B \in \F_q^{n \times n}$ be symmetric with
$\epr(B) = {\tt NA}\ell_3\ell_4 \cdots \ell_n$.
Then the following statements hold:
\ben
\item If $\F_q$ is of characteristic $2$, 
then $\ell_3=\tt N$.
\item If $\F_q$ is not of characteristic $2$, 
then $\ell_3=\tt A$.
\een
\end{obs}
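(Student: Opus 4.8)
The plan is to verify the key determinant identity the paper has already set up, namely that for a symmetric matrix $B=[b_{ij}]$ with $\epr(B) = {\tt NA}\ell_3\ell_4\cdots\ell_n$ and any $p<q<r$, one has $\det(B[\{p,q,r\}]) = 2b_{pq}b_{pr}b_{qr}$. Since the epr-sequence begins with $\tt NA$, every diagonal entry of $B$ is zero (as $\ell_1 = \tt N$) and every off-diagonal entry is nonzero (as $\ell_2 = \tt A$ forces all $2\times 2$ principal minors $-b_{ij}^2$ to be nonzero). First I would expand the determinant of the $3\times 3$ principal submatrix with zero diagonal directly: writing the principal submatrix as $\begin{bmatrix} 0 & b_{pq} & b_{pr} \\ b_{pq} & 0 & b_{qr} \\ b_{pr} & b_{qr} & 0 \end{bmatrix}$, cofactor expansion along the first row gives $2b_{pq}b_{pr}b_{qr}$, which confirms the stated formula.

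With this identity in hand, the two cases follow by a parity-of-characteristic argument. For statement (1), when $\F_q$ has characteristic $2$, the scalar $2$ is zero, so $\det(B[\{p,q,r\}]) = 0$ for every choice of $p<q<r$; hence every order-$3$ principal minor vanishes and $\ell_3 = \tt N$. For statement (2), when $\F_q$ is not of characteristic $2$, the scalar $2$ is nonzero, and since each of $b_{pq}, b_{pr}, b_{qr}$ is nonzero (using $\ell_2 = \tt A$), the product $2b_{pq}b_{pr}b_{qr}$ is a product of nonzero elements of a field and is therefore nonzero. Thus every order-$3$ principal minor is nonzero, giving $\ell_3 = \tt A$.

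The argument is essentially immediate once the determinant formula is recorded, so there is no serious obstacle; the only point requiring care is justifying that all off-diagonal entries are nonzero. This is exactly where $\ell_2 = \tt A$ is used: in characteristic not equal to $2$ (and indeed in any characteristic) the $2\times 2$ principal minor on rows/columns $\{i,j\}$ equals $b_{ii}b_{jj} - b_{ij}^2 = -b_{ij}^2$ because the diagonal is zero, and this is nonzero precisely when $b_{ij} \neq 0$. Since the stated observation is presented as following "immediately" from the preceding determinant computation, I would keep the write-up to a sentence or two, simply invoking the formula $\det(B[\{p,q,r\}]) = 2b_{pq}b_{pr}b_{qr}$ and splitting on whether $\chr(\F_q) = 2$.
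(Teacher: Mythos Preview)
Your proposal is correct and follows exactly the paper's approach: the paper records the identity $\det(B[\{p,q,r\}]) = 2b_{pq}b_{pr}b_{qr}$ just before the observation and declares that the observation ``follows immediately,'' which is precisely the determinant-plus-characteristic-split argument you give. Your added justification that $\ell_1 = {\tt N}$ zeros the diagonal and $\ell_2 = {\tt A}$ forces every off-diagonal entry to be nonzero is the routine unpacking the paper leaves implicit.
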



The next theorem improves upon 
Theorem \ref{N... over Fq with Ramsey} in the case where $\epr(B)$ starts with $\tt NA$.

\begin{thm}\label{NA... over Fq with Ramsey}
Let $p$ be a prime, $q$ be a power of $p$ and 
$B \in \F_q^{n \times n}$ be symmetric with
$\epr(B) = {\tt NA}\ell_3\ell_4 \cdots \ell_n$.
Suppose that $n \geq R_{q-1}(k) + 1$,
for some positive integer $k$.
Then, for $i=1,2, \dots, k+1$, 
the following statements hold:
\ben
\item[$(i)$] If $i \equiv 1 \pmod{p}$, 
then $\ell_i \in \{\tt N,S\}$.
\item[$(ii)$] If $i \not\equiv 1 \pmod{p}$, 
then $\ell_i \in \{\tt A,S\}$.
\een
\end{thm}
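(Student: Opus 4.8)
The plan is to pin down the exact epr-pattern of one highly structured $(k+1)\times(k+1)$ principal submatrix, produced by a normalization followed by a Ramsey argument, and then transfer that pattern back to $B$. First I would read off the structural content of the hypothesis ${\tt NA}$: the ${\tt N}$ in position $1$ forces every diagonal entry of $B$ to be $0$, while the ${\tt A}$ in position $2$ forces every off-diagonal entry to be nonzero, since $\det(B[\{i,j\}]) = -b_{ij}^2$ when $b_{ii}=b_{jj}=0$. Hence each off-diagonal entry lies in the $(q-1)$-element set $\F_q\setminus\{0\}$. Applying Observation~\ref{ones in first row} with $D=\diag(1,b_{12}^{-1},\dots,b_{1n}^{-1})$ (which preserves the epr-sequence), I may assume without loss of generality that every entry in the first row and first column of $B$ equals $1$, that the diagonal is still $0$, and that the remaining off-diagonal entries are still nonzero.

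Next comes the Ramsey step. Because $n\ge R_{q-1}(k)+1$, the principal submatrix $B[\{2,3,\dots,n\}]$ has order $n-1\ge R_{q-1}(k)$, zero diagonal, and all off-diagonal entries in the $(q-1)$-element set $\F_q\setminus\{0\}$; Observation~\ref{Ramsey idea} then yields a set $\gamma\subseteq\{2,\dots,n\}$ with $|\gamma|=k$ and a nonzero constant $c$ such that $B[\gamma]=c(J_k-I_k)$. Putting $\alpha=\{1\}\cup\gamma$, the matrix $B[\alpha]$ has a zero $(1,1)$ entry, an all-$1$ border linking index $1$ to $\gamma$, and the block $c(J_k-I_k)$ on $\gamma$. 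Every order-$i$ principal submatrix of $B[\alpha]$ is then of exactly one of two shapes: if it omits index $1$ it equals $c(J_i-I_i)$, and if it contains index $1$ it is the bordered matrix $\left[\begin{smallmatrix} 0 & J_{1,i-1}\\ J_{i-1,1} & c(J_{i-1}-I_{i-1}) \end{smallmatrix}\right]$.

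The heart of the argument is to evaluate these two determinants. For the first shape, pulling out the factor $c$ and invoking Observation~\ref{Jn-kIn} gives $\det\bigl(c(J_i-I_i)\bigr)=c^i(-1)^{i-1}(i-1)$, which (as $c\ne 0$) vanishes precisely when $i\equiv 1\pmod p$. For the bordered shape I would compute the determinant as $(-1)^{i-1}(i-1)c^{\,i-2}$, again vanishing precisely when $i\equiv 1\pmod p$. Thus for each $i\in[k+1]$, \emph{all} order-$i$ principal minors of $B[\alpha]$ are zero when $i\equiv 1\pmod p$, and \emph{all} are nonzero when $i\not\equiv 1\pmod p$; equivalently $[\epr(B[\alpha])]_i={\tt N}$ in the first case and ${\tt A}$ in the second. (The case $i=1$ is just the zero diagonal, consistent with $1\equiv 1$, and $i=k+1$ uses only the bordered shape, namely $B[\alpha]$ itself.)

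Finally I would transfer this to $B$. As $B[\alpha]$ is a principal submatrix of $B$, when $i\not\equiv 1\pmod p$ it exhibits a nonsingular order-$i$ principal submatrix of $B$, so $\ell_i\ne{\tt N}$, giving $(ii)$; and when $i\equiv 1\pmod p$ it exhibits a singular order-$i$ principal submatrix of $B$, so $\ell_i\ne{\tt A}$, giving $(i)$ (these are exactly the transfers packaged in Theorem~\ref{Inheritance}). The step I expect to be the main obstacle is the bordered determinant: obtaining a closed form that remains valid even when the inner block $c(J_{i-1}-I_{i-1})$ is itself singular, so that a plain Schur-complement expansion is unavailable. The safe route is to evaluate the determinant as an integer polynomial identity in $c$ over $\ZZ$ (via the generic Schur-complement/Sherman--Morrison computation) and then reduce modulo $p$, which legitimizes the formula for every $c\in\F_q\setminus\{0\}$.
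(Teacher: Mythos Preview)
Your proof is correct and follows the same overall strategy as the paper: normalize so that the first row and column consist of $1$s, apply the Ramsey observation to $B[\{2,\dots,n\}]$ to locate a monochromatic $k\times k$ block $c(J_k-I_k)$ on some $\gamma$, and then analyze the order-$i$ principal minors of $B$ restricted to $\{1\}\cup\gamma$. The one place you diverge is in handling that $(k+1)\times(k+1)$ block. You split its principal submatrices into two shapes (those avoiding index~$1$ and the bordered ones containing it) and evaluate the bordered determinant directly, correctly obtaining $(-1)^{i-1}(i-1)c^{\,i-2}$. The paper instead performs a second normalization: it multiplies the first row and first column of $B$ by $c$, which leaves the epr-sequence unchanged and turns the entire $(k+1)\times(k+1)$ block into $c(J_{k+1}-I_{k+1})$; now \emph{every} order-$i$ principal submatrix inside it has the single form $c(J_i-I_i)$, and a single application of Observation~\ref{Jn-kIn} suffices. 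This extra diagonal scaling collapses your two cases into one and sidesteps the bordered computation (and with it the delicacy you flag about the inner block possibly being singular), but both routes reach the same conclusion.
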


\bpf
Let $i \in [k+1]$.
As $[\epr(B)]_1^2 = \tt NA$, 
each off-diagonal entry of $B$ is nonzero.
Because of Observation \ref{ones in first row},
we may assume, without loss of generality, that
each entry in the first row and first column of $B$ is equal to $1$.
As $n-1 \geq R_{q-1}(k)$,
Observation \ref{Ramsey idea} implies that
there exists a nonzero element $c \in \F_q$ such that
the $(n-1)\times(n-1)$ principal submatrix $B[\{2,3,\dots,n\}]$ has a $k \times k$ principal submatrix all of whose off-diagonal entries are equal to $c$;
as each diagonal entry of $B$ is $0$,
this $k \times k$ submatrix is $c(J_k-I_k)$.
Let $\alpha \subseteq \{2,3,\dots,n\}$ be such that
$B[\alpha] = c(J_k-I_k)$.
Let $M$ be the matrix that results from multiplying the first row and first column of $B$ by $c$.
It follows, then, that 
$M[\{1\} \cup \alpha] = c(J_{k+1}-I_{k+1})$.
Observe that each $i \times i$ principal submatrix of 
$M[\{1\} \cup \alpha] $ is of the form $c(J_{i}-I_{i})$,
whose determinant is
$\det(c(J_{i}-I_{i})) = 
c^{i}\det(J_{i}-I_{i}) =
c^{i}(-1)^{i-1}(i-1)$
(by Observation \ref{Jn-kIn}).
Then, as each principal submatrix of 
$M[\{1\} \cup \alpha]$ is also a 
principal submatrix of $M$,
it follows that
$\ell_i \neq \tt A$ if $i \equiv 1 \pmod{p}$, and that
$\ell_i \neq \tt N$ if $i \not\equiv 1 \pmod{p}$.
\epf

We shall rely on 
Observation \ref{NA... and ell_3} rather than
Theorem \ref{NA... over Fq with Ramsey} to make assertions about the third term in the epr-sequence of a symmetric matrix, as the former leads to stronger assertions.

The following is a corollary to 
Theorem \ref{NA... over Fq with Ramsey}, and 
it follows from the well-known fact that
$R_2(3)=R(3,3)=6$,
$R_2(4)=R(4,4)=18$ and
$R_2(5)=R(5,5) \leq 48$
(see, for example, \cite{Ramsey-survey}).

\begin{cor}\label{NA... over F3 with Ramsey}
Let  $B \in \F_3^{n \times n}$ be symmetric with
$\epr(B) = {\tt NA}\ell_3\ell_4 \cdots \ell_n$.
Then the following statements hold:
\ben
\item[$(i)$] If $n \geq 7$, then 
$\ell_4 \in \{\tt N,S\}$; and
\item[$(ii)$] if $n \geq 19$, then $\ell_5 \in \{\tt A,S\}$; and
\item[$(iii)$] if $n \geq 49$, then $\ell_6 \in \{\tt A,S\}$.
\een
\end{cor}

The following example shows that 
the first assertion in 
Corollary \ref{NA... over F3 with Ramsey} does not 
hold for $n<7$, and that
the second assertion does not hold for $n<19$.

\begin{ex}\label{NAAANA example}
\normalfont
For the following 
$6 \times 6$ symmetric matrix over $\F_3$,
it is readily verified that
$\epr(M_{\sigma}) = \sigma$:
\[
M_{\tt NAAANA}=
\left(
\begin{array}{c|ccccc}
 0 & 1 & 1 & 1 & 1 & 1 \\
 \hline
 1 & 0 & 1 & 2 & 2 & 1 \\
 1 & 1 & 0 & 1 & 2 & 2 \\
 1 & 2 & 1 & 0 & 1 & 2 \\
 1 & 2 & 2 & 1 & 0 & 1 \\
 1 & 1 & 2 & 2 & 1 & 0 \\
\end{array}
\right).
\]
\end{ex}

Observe that for fields of characteristic $2$, 
the assertions that one can make based on
Theorem \ref{NA... over Fq with Ramsey} are not as strong as those based on Theorem \ref{epr with char 2}.
Thus, we shall not state any corollaries to Theorem \ref{NA... over Fq with Ramsey} in the case where $q=4$ or any other power of $2$.

The next fact is another corollary to 
Theorem \ref{NA... over Fq with Ramsey}, and 
it follows from the well-known fact that
$R_4(3)=R(3,3,3,3) \leq 64$
(see, for example, \cite{Ramsey-survey}).

\begin{cor}
Let $B \in \F_5^{n \times n}$ be symmetric with
$\epr(B) = {\tt NA}\ell_3\ell_4 \cdots \ell_n$.
Suppose that $n \geq 65$.
Then $\ell_4 \in \{\tt A,S\}$.
\end{cor}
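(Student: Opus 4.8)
The plan is to obtain this as a direct specialization of Theorem~\ref{NA... over Fq with Ramsey}. Here $q=5$ is itself prime, so the relevant characteristic is $p=5$, and the hypotheses of that theorem are exactly those we are given: $B \in \F_5^{n\times n}$ is symmetric with $\epr(B) = {\tt NA}\ell_3\ell_4\cdots\ell_n$. Since our target is the single term $\ell_4$, the first step is to choose the parameter $k$ in the theorem just large enough that the index $4$ lies in the range $\{1,2,\dots,k+1\}$ for which the theorem draws conclusions. The smallest admissible choice is $k=3$, which gives $k+1=4$ and so brings $\ell_4$ within scope.

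With $k=3$ fixed, the hypothesis of Theorem~\ref{NA... over Fq with Ramsey} becomes
\[
n \geq R_{q-1}(k) + 1 = R_{4}(3) + 1.
\]
Next I would invoke the cited bound $R_4(3) = R(3,3,3,3) \leq 64$, from which $R_4(3) + 1 \leq 65$. Because $n \geq 65$ by assumption, we obtain $n \geq R_4(3)+1$, so the hypothesis of the theorem is met. The theorem then delivers conclusions for every $i \in \{1,2,3,4\}$.

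Finally, I would read off the case $i=4$. Since $4 \not\equiv 1 \pmod{5}$, it is part~$(ii)$ of the theorem that applies, yielding $\ell_4 \neq {\tt N}$, that is, $\ell_4 \in \{\tt A,S\}$, which is the desired conclusion. There is no genuine obstacle in this argument; it is a repackaging of Theorem~\ref{NA... over Fq with Ramsey} together with the numerical bound on $R_4(3)$. The only point demanding care is the bookkeeping: one must take $k=3$ (so that $4=k+1$ is covered rather than falling just outside the range), and one must verify that $4\not\equiv 1\pmod 5$ so that the governing statement is part~$(ii)$ rather than part~$(i)$.
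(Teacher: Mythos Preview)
Your proposal is correct and follows exactly the paper's approach: the paper states this corollary as an immediate consequence of Theorem~\ref{NA... over Fq with Ramsey} together with the bound $R_4(3)=R(3,3,3,3)\le 64$, which is precisely what you do with $k=3$ and the observation that $4\not\equiv 1\pmod 5$.
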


We close this section by applying Observation \ref{Ramsey idea} to make an assertion about  epr-sequences that start with $\tt AN$.

\begin{thm}\label{AN... over F3 with Ramsey}
Let $B \in \F_3^{n \times n}$ be symmetric with
$\epr(B) = {\tt AN}\ell_3\ell_4 \cdots \ell_n$.
Suppose that $n \geq R(3k+1,3k+1) + 1$,
for some positive integer $k$.
Then, for $i=1,2, \dots, k$, 
$\ell_{3i+2} \in \{\tt N,S\}$.
\end{thm}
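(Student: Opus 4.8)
The plan is to normalize $B$ via Proposition \ref{AN}, extract a large monochromatic principal submatrix using the two-colour Ramsey number, and then exhibit singular principal submatrices of the correct orders through a short Schur-complement determinant computation over $\F_3$.

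First I would invoke Proposition \ref{AN}. Since $\epr(B) = {\tt AN}\ell_3\cdots\ell_n$, there is a symmetric matrix $M \in \F_3^{n\times n}$ with $\epr(M) = \epr(B)$, every entry equal to $\pm 1$, every diagonal entry equal to $1$, and every off-diagonal entry of the first row and first column equal to $1$. In particular the off-diagonal entries of the block $M[\{2,3,\dots,n\}]$ lie in the two-element set $T = \{1,-1\}$. Because $n \geq R(3k+1,3k+1)+1$, the $(n-1)\times(n-1)$ matrix $M[\{2,3,\dots,n\}]$ has order at least $R(3k+1,3k+1) = R_2(3k+1)$, so Observation \ref{Ramsey idea} (applied with this two-element colour set) yields an index set $\alpha \subseteq \{2,3,\dots,n\}$ with $|\alpha| = 3k+1$ and a scalar $c \in \{1,-1\}$ such that every off-diagonal entry of $M[\alpha]$ equals $c$.

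Next, for each $i \in [k]$ I would choose a subset $\alpha' \subseteq \alpha$ with $|\alpha'| = 3i+1$ (possible since $3i+1 \leq 3k+1$), and study the $(3i+2)\times(3i+2)$ principal submatrix $N := M[\{1\}\cup\alpha']$. Writing $s = 3i+1$, this matrix has a single $1$ in the $(1,1)$ position, all-ones off-diagonal first row and column, and lower block $(1-c)I_s + cJ_s$ (diagonal $1$, off-diagonal $c$). Taking the Schur complement of the nonsingular $(1,1)$ entry, and using $J_{s,1}J_{1,s} = J_s$, gives $N/[1] = (1-c)I_s + cJ_s - J_s = (1-c)(I_s - J_s)$. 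Hence by Observation \ref{Jn-kIn} (applied to $I_s - J_s = -(J_s - I_s)$),
$$\det(N) = (1-c)^s \det(I_s - J_s) = -(1-c)^s (s-1).$$
Since $s - 1 = 3i \equiv 0 \pmod 3$, this determinant vanishes in $\F_3$ regardless of the value of $c$, so $N$ is singular; as $N$ is a $(3i+2)\times(3i+2)$ principal submatrix of $M$ and $\epr(M) = \epr(B)$, it follows that $\ell_{3i+2} \neq {\tt A}$, i.e. $\ell_{3i+2} \in \{{\tt N},{\tt S}\}$, for every $i \in [k]$.

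I expect the only genuinely delicate point to be the bookkeeping that pins the Ramsey bound at exactly $R(3k+1,3k+1)+1$ rather than $R(3k+2,3k+2)$: it is precisely the presence of the first index, with its all-ones row, that lets us build a singular matrix of order $3i+2$ from a monochromatic clique of order only $3i+1$, so a clique of order $3k+1$ in the $(n-1)$-block suffices for all $i \le k$. The companion subtlety is the choice $|\alpha'| = 3i+1$, forced by the requirement $s-1 \equiv 0 \pmod 3$ that makes the Schur complement singular over $\F_3$; one should verify, as above, that both colours $c = \pm 1$ give a zero determinant so that no case split is needed.
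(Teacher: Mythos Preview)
Your proof is correct and follows essentially the same approach as the paper: normalize via Proposition~\ref{AN}, apply the two-colour Ramsey bound to the $(n-1)\times(n-1)$ block $M[\{2,\dots,n\}]$ to extract a monochromatic principal submatrix of order $3k+1$, and then show that the $(3i+2)\times(3i+2)$ principal submatrix containing the first row is singular. The only difference is cosmetic: the paper scales the first row and column of $M$ by $c$ so that the whole $(3k+2)$-block becomes $c(J_{3k+2}-(1-c)I_{3k+2})$ and then applies Observation~\ref{Jn-kIn} directly (followed by a one-line check of $c=1$ versus $c=2$), whereas you take the Schur complement of the $(1,1)$ entry and observe that the factor $s-1=3i$ kills the determinant for both colours at once.
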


\bpf
Let $i \in [k]$.
Because of Proposition \ref{AN}, 
we may assume that 
each entry of $B$ is equal to either $-1$ or $1$, and that 
each entry on the diagonal, 
the first row and the first column of $B$ is equal to $1$.
As $n-1 \geq R(3k+1,3k+1)$, 
Observation \ref{Ramsey idea} implies that
there exists a nonzero element $c \in \F_3$ such that 
$B[\{2,3,\dots n\}]$ (whose order is $n-1$) has a
$(3k+1) \times (3k+1)$ principal submatrix  all of whose off-diagonal entries are equal to $c$;
as each diagonal entry of $B$ is equal to $1$,
this submatrix is
$c(J_{3k+1}-I_{3k+1})+I_{3k+1}$.
Let $\alpha \subseteq \{2,3,\dots,n\}$ be such that 
$B[\alpha] = c(J_{3k+1}-I_{3k+1})+I_{3k+1}$.
As $c^2=1$ (because the square of 
any nonzero element in $\F_3$ is equal to $1$),
$B[\alpha] = c(J_{3k+1}-(1-c)I_{3k+1})$.
Let $M$ be the matrix that results from multiplying the first row and first column of $B$ by $c$.
As multiplying a row or a column of a matrix by a nonzero constant leaves the rank of each of its submatrices invariant, $\epr(M)=\epr(B)$. 
Thus, 
it suffices to establish the desired conclusion for $M$.
Observe that 
each entry in the first row and first column of $M$ is equal to $c$, and that 
the $(1,1)$-entry of $M$ is $c^2=1$.
Thus, 
$M[\{1\} \cup \alpha] = c(J_{3k+2}-(1-c)I_{3k+2})$.
Observe that 
each $(3i+2)\times (3i+2)$ principal submatrix of 
$M[\{1\} \cup \alpha]$ is of the form 
$c(J_{3i+2}-(1-c)I_{3i+2})$. 
By Observation \ref{Jn-kIn},
\begin{align*}
\det(c(J_{3i+2}-(1-c)I_{3i+2}))
&=c^{3i+2}\det(J_{3i+2}-(1-c)I_{3i+2})\\
&=c^{3i+2}(-(1-c))^{3i+1}(3i+2-(1-c)) \\
&=c^{3i+2}(c-1)^{3i+1}(3i+c+1).
\end{align*}
As $c$ is nonzero, either $c=1$ or $c=2$.
Thus, $\det(c(J_{3i+2}-(1-c)I_{3i+2})) = 0$.
Then, as each principal submatrix of $M[\{1\} \cup \alpha]$ is also a principal submatrix of $M$, 
$\ell_{3i+2} \neq \tt A$.
\epf

Applying Theorem \ref{AN... over F3 with Ramsey} and the fact that $R(4,4)=18$ (see, for example, \cite{Ramsey-survey}), leads to the following corollary.

\begin{cor}
Let $B \in \F_3^{n \times n}$ be symmetric with
$\epr(B) = {\tt AN}\ell_3\ell_4 \cdots \ell_n$.
Suppose that $n \geq 19$.
Then $\ell_5 \in \{\tt N,S\}$.
\end{cor}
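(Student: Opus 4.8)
The plan is to obtain this as an immediate specialization of Theorem~\ref{AN... over F3 with Ramsey}. The corollary asks for a conclusion about the single term $\ell_5$, so I would match $5$ to the indexing $3i+2$ appearing in that theorem: solving $3i+2=5$ gives $i=1$. This tells me to apply the theorem with the parameter choice $k=1$, for which the index range $i=1,2,\dots,k$ collapses to the single value $i=1$, yielding exactly $\ell_{3\cdot 1+2}=\ell_5\in\{\tt N,S\}$.

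Next I would check that the hypothesis of the corollary supplies the hypothesis of the theorem. With $k=1$, Theorem~\ref{AN... over F3 with Ramsey} requires $n\geq R(3k+1,3k+1)+1=R(4,4)+1$. Invoking the well-known value $R(4,4)=18$ (recorded, e.g., in \cite{Ramsey-survey}), this threshold becomes $n\geq 19$, which is precisely the assumption made in the corollary. Hence the theorem applies and delivers the conclusion with no further work.

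Because the theorem does all the heavy lifting, there is essentially no obstacle: the only nontrivial input is the numerical fact $R(4,4)=18$, which is a known Ramsey number rather than something to be proved here. The one point worth a second glance is the bookkeeping — confirming that $k=1$ really does isolate $\ell_5$ and nothing else, and that the off-by-one in the bound $R(4,4)+1$ (coming from the extra first row and column used in the theorem's proof) lines up with the stated $n\geq 19$. If one instead wanted a proof independent of Theorem~\ref{AN... over F3 with Ramsey}, I would reproduce its argument in this special case: normalize $B$ using Proposition~\ref{AN} so that its off-diagonal entries lie in $\{-1,1\}$ with first row, first column, and diagonal equal to $1$; apply the Ramsey guarantee (Observation~\ref{Ramsey idea}) to the order-$(n-1)$ principal submatrix $B[\{2,\dots,n\}]$ to extract a monochromatic $4\times 4$ principal submatrix; adjoin the first index to obtain an order-$5$ principal submatrix of a scaled copy of $B$ of the form $c(J_5-(1-c)I_5)$; and finally use Observation~\ref{Jn-kIn} to verify that its determinant vanishes over $\F_3$, forcing $\ell_5\neq\tt A$.
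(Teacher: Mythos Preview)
Your proposal is correct and is exactly the paper's approach: the corollary is stated as an immediate consequence of Theorem~\ref{AN... over F3 with Ramsey} with $k=1$, together with the known value $R(4,4)=18$ from \cite{Ramsey-survey}. Your bookkeeping check that $3k+2=5$ and $R(4,4)+1=19$ matches the paper's one-line justification.
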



\section{Symmetric matrices over $\mathbb{F}_3$}\label{s:F_3}
$\null$
\indent
This section focuses on the epr-sequences of symmetric matrices over the field $\F = \F_3$,
with its main result being a complete characterization of those that do not contain any $\tt S$ terms (see Theorem \ref{Main theorem}).

Before restricting our focus to the field $\F_3$,
we state two facts relating to 
not only $\F_3$ but also 
all fields of characteristic $3$,
the first of which follows immediately from 
Propositions \ref{Jn-In} and \ref{Jn-2In}:

\newpage

\begin{prop}\label{Jn-In and Jn-2In over F3}
The following statements hold over a field of characteristic $3$:
\ben
\item If $n \equiv 1 \pmod{3}$, then 
$\epr(J_n - I_n) = \tt \OL{NAA}N$ and
$\epr(J_n - 2I_n) = \tt \OL{ANA}A$.
\item If $n \equiv 2 \pmod{3}$, then 
$\epr(J_n - I_n) = \tt \OL{NAA}NA$ and
$\epr(J_n - 2I_n) = \tt \OL{ANA}AN$.
\item If $n \equiv 3 \pmod{3}$, then 
$\epr(J_n - I_n) = \tt \OL{NAA}NAA$ and
$\epr(J_n - 2I_n) = \tt \OL{ANA}ANA$.
\een
\end{prop}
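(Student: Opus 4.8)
The plan is to derive Proposition~\ref{Jn-In and Jn-2In over F3} as a direct consequence of Propositions~\ref{Jn-In} and~\ref{Jn-2In}, specialized to characteristic $p=3$. Both cited results already tell us that each term $\ell_i$ of $\epr(J_n-I_n)$ (respectively $\epr(J_n-2I_n)$) lies in $\{\tt A,N\}$, and they pin down exactly which positions are $\tt N$: for $J_n-I_n$ the term $\ell_i=\tt N$ iff $i\equiv 1\pmod 3$, and for $J_n-2I_n$ the term $\ell_i=\tt N$ iff $i\equiv 2\pmod 3$. So the entire content of the proposition is purely a bookkeeping translation of these two congruence conditions into the three residue classes of $n$ modulo $3$, written in the overline (repetition) notation.

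First I would fix the field to have characteristic $3$ and invoke Proposition~\ref{Jn-In} to conclude that $\epr(J_n-I_n)=\ell_1\ell_2\cdots\ell_n$ has $\ell_i=\tt N$ precisely when $i\equiv 1\pmod 3$ and $\ell_i=\tt A$ otherwise. The positions $i\equiv 1\pmod 3$ are $1,4,7,\dots$, so the sequence reads $\tt N\,A\,A\,N\,A\,A\,N\cdots$, i.e.\ it is the block $\tt NAA$ repeated and then truncated according to where $n$ falls. The only thing to verify is the tail in each of the three cases: when $n\equiv 1\pmod 3$ the last index is $\equiv 1$, giving a trailing $\tt N$ after a whole number of $\tt NAA$ blocks, hence $\tt \OL{NAA}N$; when $n\equiv 2\pmod 3$ the sequence ends one position later, appending $\tt A$, hence $\tt \OL{NAA}NA$; and when $n\equiv 0\pmod 3$ it ends two positions later, appending $\tt AA$, hence $\tt \OL{NAA}NAA$. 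Symmetrically, I would apply Proposition~\ref{Jn-2In} (valid since $p=3>2$): here $\ell_i=\tt N$ iff $i\equiv 2\pmod 3$, so the repeating block is $\tt ANA$, and the same case analysis on $n\bmod 3$ produces the tails $\tt A$, $\tt AN$, and $\tt ANA$ respectively, yielding $\tt \OL{ANA}A$, $\tt \OL{ANA}AN$, and $\tt \OL{ANA}ANA$.

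I would write this out by explicitly listing, in each of the three residue cases, which indices are $\tt N$ and confirming the overline-block description matches, taking care that the notation $\OL{NAA}$ stands for a (possibly empty) repetition of the block. A subtlety worth a sentence is the small-$n$ boundary behavior: one should check that the formulas remain correct when the overline part is vacuous (for instance $n=1$ gives $\tt N$ for $J_n-I_n$ and $\tt A$ for $J_n-2I_n$, consistent with case~1), so that no spurious leading block is asserted. There is no real mathematical obstacle here—the entire difficulty, such as it is, lies in presenting the modular arithmetic cleanly and matching it against the compressed overline notation without an off-by-one error in the tails. The main point requiring care is simply confirming the three truncation patterns correspond correctly to $n\equiv 1,2,0\pmod 3$.
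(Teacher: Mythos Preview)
Your proposal is correct and is exactly the approach the paper takes: it states that the proposition follows immediately from Propositions~\ref{Jn-In} and~\ref{Jn-2In}, and your write-up simply makes that specialization to $p=3$ explicit with the appropriate bookkeeping on $n\bmod 3$.
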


The following statement follows immediately from Proposition \ref{ANA...}.

\begin{prop}\label{ANA... over F3}
Let $\F$ be a field of characteristic $3$ and
$B \in \Fnn$ be symmetric.
Suppose that
$\epr(B) = {\tt ANA}\ell_4 \ell_5 \cdots \ell_n$.
Then $\epr(B)$ is of one of the following forms:
\[
\tt ANA\OL{ANA}, \quad \quad
\tt ANA\OL{ANA}A, \quad \quad
\tt ANA\OL{ANA}AN.
\]
\end{prop}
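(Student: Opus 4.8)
The plan is to apply Proposition \ref{ANA...} directly with $p=3$, which is legitimate since $\F$ has characteristic $3>2$. That proposition tells us that for each $i \in \{4,5,\dots,n\}$ the term $\ell_i$ lies in $\{\tt A,\tt N\}$, and that $\ell_i = \tt N$ precisely when $i \equiv 2 \pmod{3}$. So the bulk of the sequence is already pinned down; the only work is to fold in the first three terms and then convert the resulting periodic description into the overline notation.

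First I would observe that the prescribed initial block $\ell_1\ell_2\ell_3 = \tt ANA$ already obeys the very same rule supplied by Proposition \ref{ANA...}: since $1\equiv 1$, $2\equiv 2$, and $3\equiv 0 \pmod{3}$, we have $\ell_1 = \tt A$, $\ell_2 = \tt N$, $\ell_3 = \tt A$, which matches ``$\ell_i = \tt N$ iff $i \equiv 2 \pmod{3}$'' in each case. Consequently the characterization extends uniformly to all of $[n]$: for every $i \in [n]$, $\ell_i = \tt N$ when $i \equiv 2 \pmod{3}$ and $\ell_i = \tt A$ otherwise. In particular every term is in $\{\tt A,\tt N\}$, so no $\tt S$ ever appears, and the entire sequence is forced to be the length-$n$ prefix of the infinite periodic word $\tt A\,\tt N\,\tt A\,\tt A\,\tt N\,\tt A\,\cdots$ obtained by repeating the block $\tt ANA$.

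The remaining step is purely notational: split into the three residue classes of $n$ modulo $3$ and read off the tail. When $n \equiv 0 \pmod 3$ the sequence is exactly $n/3$ consecutive copies of $\tt ANA$, i.e.\ $\tt ANA\OL{ANA}$; when $n \equiv 1 \pmod 3$ there is one extra trailing term (from the position $\equiv 1 \pmod 3$, which is $\tt A$), giving $\tt ANA\OL{ANA}A$; and when $n \equiv 2 \pmod 3$ there are two extra trailing terms (from positions $\equiv 1$ and $\equiv 2 \pmod 3$, which are $\tt A$ and $\tt N$), giving $\tt ANA\OL{ANA}AN$. This exhausts all possibilities for $n$, yielding exactly the three displayed forms.

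Because every step is a direct read-off from Proposition \ref{ANA...}, there is essentially no genuine obstacle here; the statement is labelled as following ``immediately,'' and indeed it does. The only point demanding minor care is the bookkeeping in the final paragraph: one must check that the overline block $\OL{\tt ANA}$ is aligned so that the displayed forms have total lengths $\equiv 0$, $1$, and $2 \pmod 3$ respectively, matching the three cases, and that the trailing letters agree with the values forced by the residue of $n$.
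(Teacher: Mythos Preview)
Your proof is correct and is essentially the same as the paper's, which simply notes that the statement follows immediately from Proposition~\ref{ANA...}; you have merely spelled out the residue-class bookkeeping that the paper leaves implicit.
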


The next fact will be used throughout these pages without being referenced.

\begin{obs}\label{square}
If $a \in \F_3$ is nonzero, then $a^2 =1$.
\end{obs}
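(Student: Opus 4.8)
The final statement to prove is Observation~\ref{square}: if $a \in \F_3$ is nonzero, then $a^2 = 1$.

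The plan is to verify this by exhausting the (very small) list of nonzero elements of $\F_3$. Since $\F_3 = \{0, 1, 2\}$ under addition and multiplication modulo $3$, the nonzero elements are precisely $1$ and $2$. First I would simply compute the square of each. For $a = 1$, we have $1^2 = 1$. For $a = 2$, we have $2^2 = 4 \equiv 1 \pmod 3$. Since these are the only two nonzero elements, the claim holds for every nonzero $a \in \F_3$.

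An alternative, marginally more conceptual route is to invoke the fact that the multiplicative group $\F_3^\times$ has order $2$, so every element satisfies $a^2 = 1$ by Lagrange's theorem (equivalently, by Fermat's little theorem, $a^{p-1} = 1$ for nonzero $a \in \F_p$, here with $p = 3$). Either framing yields the result immediately, but for a statement this elementary the direct two-case computation is cleanest and requires no cited machinery.

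There is no substantive obstacle here: the observation is a triviality recorded for convenient later reference, and the only ``work'' is confirming that the two nonzero residues square to $1$ modulo $3$. The proof is a single line, and I would present it as such.

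\begin{proof}
The nonzero elements of $\F_3$ are $1$ and $2$, and $1^2 = 1$ while $2^2 = 4 \equiv 1 \pmod 3$.
\end{proof}
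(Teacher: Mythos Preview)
Your proof is correct. The paper itself states this observation without any proof, so your one-line direct verification (and the optional Fermat/Lagrange remark) is exactly the sort of justification a reader would supply and is entirely appropriate.
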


An epr-sequence over $\F_3$ that 
starts with $\tt AA$ does not have, roughly speaking, 
any $\tt N$ terms in the first half of the sequence.

\begin{prop}\label{AA...}
Let $B  \in \Ftnn$ be symmetric.
Suppose that $\epr(B)={\tt AA}\ell_3\ell_4 \cdots \ell_n$.
Then, 
for all $3 \leq j \leq \big\lceil \frac{n}{2} \big\rceil$, 
$\ell_j \neq \tt N$.
\end{prop}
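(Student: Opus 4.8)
The plan is to use the hypothesis $[\epr(B)]_1^2 = \tt{AA}$ to pin down the structure of $B$ almost completely over $\F_3$, after which nonsingular principal submatrices of every order up to $\lceil n/2\rceil$ can simply be read off. First I would observe that, since $[\epr(B)]_1 = \tt A$, every diagonal entry of $B$ is a nonzero element of $\F_3$, hence equal to $1$ or $2$; so I would partition $[n] = P \sqcup Q$, where $P = \{i : b_{ii} = 1\}$ and $Q = \{i : b_{ii} = 2\}$. The crucial point is that $[\epr(B)]_2 = \tt A$ forces every off-diagonal entry of $B$ lying within $P$ or within $Q$ to vanish: for $i \neq j$ with $b_{ii} = b_{jj}$ we have $\det(B[\{i,j\}]) = b_{ii}b_{jj} - b_{ij}^2 = 1 - b_{ij}^2$ (using Observation~\ref{square}, since $b_{ii}^2 = 1$), and over $\F_3$ this is nonzero precisely when $b_{ij} = 0$. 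Consequently, after a permutation similarity (Observation~\ref{constant and permutation}, which preserves the epr-sequence) I may assume
\[
B = \begin{pmatrix} I_p & X \\ X^\top & 2 I_q \end{pmatrix},
\]
where $p = |P|$, $q = |Q|$, $p+q = n$, and $X \in \F_3^{p\times q}$ is arbitrary.

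Once this block form is in hand, the conclusion is immediate and does not even require examining $X$. For any $j \leq p$, choosing $P' \subseteq P$ with $|P'| = j$ gives the principal submatrix $B[P'] = I_j$, which is nonsingular; similarly, for any $j \leq q$, choosing $Q' \subseteq Q$ with $|Q'| = j$ gives $B[Q'] = 2I_j$, whose determinant $2^j$ is nonzero. Hence $[\epr(B)]_j \neq \tt N$ whenever $j \leq \max\{p,q\}$. Since $p+q = n$, we always have $\max\{p,q\} \geq \lceil n/2 \rceil$, so every $j$ with $3 \leq j \leq \lceil n/2\rceil$ (indeed every $j \leq \lceil n/2\rceil$) satisfies $\ell_j \neq \tt N$, as desired.

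The only real content lies in the structural reduction of the first paragraph; everything after it is bookkeeping. The step I expect to need the most care is verifying that $[\epr(B)]_2 = \tt A$ genuinely forces the diagonal blocks $B[P]$ and $B[Q]$ to be scalar matrices, since this is exactly where the field $\F_3$ is essential: a nonzero square equals $1$, so $1 - b_{ij}^2 = 0$ holds precisely for $b_{ij} \in \{1,2\}$. This same feature is the source of the bound $\lceil n/2\rceil$ in the statement, because the guaranteed nonsingular submatrices are supplied by whichever of the two scalar blocks $I_p$, $2I_q$ is larger, and one of them must have order at least $\lceil n/2\rceil$.
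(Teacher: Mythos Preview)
Your proof is correct and follows essentially the same approach as the paper: both arguments use $[\epr(B)]_2=\tt A$ together with Observation~\ref{square} to show that the principal submatrix on the indices with a common diagonal value is a scalar matrix, and then read off nonsingular principal submatrices of every order up to $\lceil n/2\rceil$ from the larger of the two blocks. The only cosmetic difference is that the paper first multiplies $B$ by a nonzero constant to force the $1$-block to be the larger one, whereas you keep both blocks and invoke $\max\{p,q\}\ge\lceil n/2\rceil$ directly.
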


\bpf
As multiplying a matrix by a nonzero constant leaves its epr-sequence invariant 
(see Observation \ref{constant and permutation}), 
and because the square of 
any nonzero number in $\F_3$ is $1$,
we may assume, without loss of generality,
that the number of diagonal entries of $B$ that
are equal to $1$ is larger than the number of diagonal entries that
are equal to $2$.
Let $p$ be the number of diagonal entries of $B$ that are equal to $1$ (as $[\epr(B)]_1 = \tt A$, $p \neq 0$).
As any two permutationally similar matrices have
the same epr-sequence
(see Observation \ref{constant and permutation}),
we may assume, without loss of generality, that
all of the diagonal entries of the 
leading $p \times p$ principal submatrix of $B$ are 
equal to $1$.
As each diagonal entry of $B$ is nonzero,
the pigeonhole principle implies that
$p \geq \bigl\lceil \frac{n}{2} \bigr\rceil$.
Thus, it suffices to show that, 
for all integers $j$ with $3 \leq j \leq p$,
$\ell_j \neq \tt N$.
As $[\epr(B)]_2 = \tt A$,
$B[[p]] = I_p$ (otherwise, $B[[p]]$ would have zero principal minor of order $2$).
Thus,  for all integers $j$ with $3 \leq j \leq p$,
$B[[j]]= I_j$ (which is nonsingular), 
implying that $\ell_j \neq \tt N$.
\epf

A corollary to Proposition \ref{AA...} is that
a symmetric matrix over $\F_3$ whose epr-sequence starts with $\tt AAN$ is a matrix of order at most $4$:

\begin{cor}\label{AAN...}
Let $B  \in \Ftnn$ be symmetric.
Suppose that 
$\epr(B)={\tt AAN}\ell_4\ell_5 \cdots \ell_n$.
Then $n \leq 4$ and $\epr(B)$ is one of the following sequences:
\[
{\tt AAN}, \quad
{\tt AANA} \quad \text{or} \quad
{\tt AANN}.
\]
\end{cor}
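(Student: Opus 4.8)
The plan is to leverage Proposition \ref{AA...} directly to bound the order $n$, and then to enumerate the possibilities. Suppose $\epr(B) = {\tt AAN}\ell_4\ell_5\cdots\ell_n$, so in particular $[\epr(B)]_3 = {\tt N}$. Proposition \ref{AA...} asserts that for all $3 \leq j \leq \lceil n/2\rceil$ we have $\ell_j \neq {\tt N}$. Since $\ell_3 = {\tt N}$, the index $j=3$ must fall outside the range $[3, \lceil n/2\rceil]$, which forces $\lceil n/2\rceil < 3$, i.e.\ $\lceil n/2\rceil \leq 2$, and hence $n \leq 4$. This immediately gives the order bound, which I expect to be the crux of the argument; everything after is a short case analysis.

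Having established $n \leq 4$, I would dispose of the sequences of length at most $4$ beginning with ${\tt AAN}$. If $n = 3$, then $\epr(B) = {\tt AAN}$ and there is nothing further to check. If $n = 4$, then $\epr(B) = {\tt AAN}\ell_4$ with $\ell_4 \in \{{\tt A}, {\tt N}, {\tt S}\}$, so a priori the candidates are ${\tt AANA}$, ${\tt AANN}$, and ${\tt AANS}$. The remaining task is to rule out ${\tt AANS}$. Here I would invoke the $\tt NN$ Theorem (Theorem \ref{NN result}) in contrapositive spirit together with the structural constraint that $\ell_3 = {\tt N}$: a single ${\tt S}$ in the final position when the preceding term is ${\tt N}$ is the delicate case, since Theorem \ref{NN result} only forbids ${\tt S}$ after \emph{two} consecutive ${\tt N}$s and does not directly apply to a lone ${\tt N}$.

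The main obstacle, then, is excluding ${\tt AANS}$ for $n = 4$. Since $\ell_4 = {\tt S}$ would require \emph{some but not all} of the $4\times 4$ principal minors to be nonzero, but $B$ itself is $4\times 4$, there is exactly one principal minor of order $4$, namely $\det(B)$; a single minor cannot be simultaneously ``some but not all,'' so $\ell_4 \in \{{\tt A}, {\tt N}\}$ whenever $n=4$. This observation (that the last term of any epr-sequence is never ${\tt S}$, as $\ell_n$ records the lone top-order minor $\det(B)$) is what cleanly eliminates ${\tt AANS}$. Thus the only surviving possibilities for $n=4$ are ${\tt AANA}$ and ${\tt AANN}$, which together with the $n=3$ case ${\tt AAN}$ yields exactly the three listed sequences.
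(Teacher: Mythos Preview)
Your proof is correct and follows essentially the same route as the paper: apply Proposition \ref{AA...} to force $\lceil n/2\rceil < 3$ and hence $n\le 4$, then observe that the final term $\ell_n$ of any epr-sequence lies in $\{\tt A,N\}$ because there is only one order-$n$ principal minor. The paper simply asserts $\ell_n\in\{\tt A,N\}$ without comment, whereas you spell out the reasoning (and briefly detour through the $\tt NN$ Theorem before landing on the right justification), but the substance is identical.
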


\bpf
As $[\epr(B)]_3 = \tt N$,
Proposition \ref{AA...} implies that
$3>\big\lceil \frac{n}{2} \big\rceil$.
It follows, then, that $n \leq 4$, as desired.
Then, as $B$ is of order $n \geq 3$, 
and because $\ell_n \in \{\tt A,N\}$, either 
$\epr(B) =\tt AAN$ or 
$\epr(B) =\tt AANA$ or
$\epr(B) =\tt AANN$, as desired.
\epf

We need the following lemma.

\begin{lem}\label{AAA... lemma}
Let $B  \in \Ftnn$ be symmetric.
Suppose that $\epr(B)={\tt AAA}\ell_4\ell_5 \cdots \ell_n$ and 
$B \neq I_n$ and $B \neq 2I_n$.
Then there exists a constant $c \in \F$ and a 
permutation matrix $P$ such that
\[ 
cP^{\top}BP = 
\left(
\begin{array}{c|c}
I_p     & F \\
\hline
F^{\top} & 2I_{n-p}
\end{array}
\right),
\]
where
$\bigl\lceil \frac{n}{2} \bigr\rceil \leq p \leq n-1$  and 
$F$ is a matrix with at most one nonzero entry in 
each row and each column.
\end{lem}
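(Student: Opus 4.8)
The plan is to normalize $B$ by scaling and permutation, and then read off all the structural constraints directly from the first three letters ${\tt AAA}$ of the epr-sequence. Since $[\epr(B)]_1 = {\tt A}$, every diagonal entry of $B$ is nonzero and hence lies in $\{1,2\}$. Exactly as in the proof of Proposition \ref{AA...}, I would multiply $B$ by a suitable nonzero constant $c$ (which merely swaps the roles of $1$ and $2$ on the diagonal, since $a^2 = 1$ for nonzero $a \in \F_3$) so that at least half of the diagonal entries equal $1$; writing $p$ for the number of such entries, the pigeonhole principle gives $p \geq \lceil \frac{n}{2}\rceil$. By Observation \ref{constant and permutation} I may further conjugate by a permutation matrix $P$ so that the first $p$ diagonal entries of the normalized matrix $cP^\top B P = [b_{ij}]$ equal $1$ and the remaining $n-p$ equal $2$, all while preserving the epr-sequence, so that $[\epr(cP^\top BP)]_1^3 = {\tt AAA}$ as well.

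Next I would extract the block structure from $[\epr(cP^\top BP)]_2 = {\tt A}$. For indices $i,j$ with equal diagonal entries $b_{ii} = b_{jj} \in \{1,2\}$ we have $b_{ii}^2 = 1$, so $\det(cP^\top BP[\{i,j\}]) = 1 - b_{ij}^2$; as this must be nonzero, $b_{ij} = 0$. Applying this inside each diagonal-value block shows that the leading $p \times p$ block is $I_p$ and the trailing block is $2I_{n-p}$, so $cP^\top B P = \left(\begin{smallmatrix} I_p & F \\ F^\top & 2I_{n-p}\end{smallmatrix}\right)$. If $p = n$ here, then $cP^\top B P = I_n$, forcing $B \in \{I_n, 2I_n\}$, contrary to hypothesis; hence $p \leq n-1$.

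The heart of the argument is to use $[\epr(cP^\top BP)]_3 = {\tt A}$ to bound the support of $F = [f_{ik}]$. Taking two indices from the $I_p$ block and one index from the $2I_{n-p}$ block produces a $3 \times 3$ principal submatrix whose determinant computes to $2 - f_{ik}^2 - f_{i'k}^2$; in $\F_3$ this vanishes exactly when $f_{ik}$ and $f_{i'k}$ are both nonzero, so each column of $F$ has at most one nonzero entry. Symmetrically, one index from the $I_p$ block and two from the $2I_{n-p}$ block give a determinant equal to $1 + f_{ik}^2 + f_{ik'}^2$ in $\F_3$, which vanishes precisely when both entries are nonzero, so each row of $F$ has at most one nonzero entry; together these yield the claimed form of $F$. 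None of the steps is genuinely difficult; the only thing requiring care is the bookkeeping --- choosing the three-element index sets that straddle the two blocks and evaluating the two $3\times 3$ determinants correctly in $\F_3$, where the reductions $4 \equiv 1$ and $-2 \equiv 1$ make the column case and the row case look superficially different.
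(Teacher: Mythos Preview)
Your proposal is correct and follows essentially the same approach as the paper's proof: normalize by a scalar and a permutation so that the diagonal reads $1,\dots,1,2,\dots,2$ with $p \ge \lceil n/2\rceil$ ones, use $\ell_2={\tt A}$ to force the diagonal blocks to be $I_p$ and $2I_{n-p}$ (ruling out $p=n$ via $B\notin\{I_n,2I_n\}$), and then use $\ell_3={\tt A}$ on the two types of straddling $3\times 3$ principal submatrices to bound the support of $F$. The only cosmetic difference is that the paper writes the row-case determinant as $2(2-m_{rs}^2-m_{rt}^2)$ while you reduce it in $\F_3$ to $1+f_{ik}^2+f_{ik'}^2$; these are of course equal.
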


\bpf
Suppose that $\epr(B)=\ell_1\ell_2 \cdots \ell_n$.
Thus, $\ell_1\ell_2\ell_3 = \tt AAA$.
As the square of any nonzero number in $\F_3$ is $1$,
there is a constant $c \in \F$ such that 
the number of diagonal entries of $cB$ that are equal to $1$ is 
larger than or equal to the number of diagonal entries that are equal to $2$.
Let $p$ be the number of diagonal entries of $cB$ that are equal to $1$.
As multiplying a matrix by a 
nonzero constant leaves its epr-sequence invariant
(see Observation \ref{constant and permutation}),
$\epr(cB) = \epr(B)$.
Then, as $\ell_1 = \tt A$,
the pigeonhole principle implies that
$p \geq \bigl\lceil \frac{n}{2} \bigr\rceil$.
Then, as $n \geq 3$, $p \geq 2$.
Let $P$ be a permutation matrix such that 
the first $p$ diagonal entries of
$P^{\top}cBP$ are equal to $1$.
Let $M = P^{\top}cBP$.
By Observation \ref{constant and permutation},
$\epr(M) = \epr(cB)=\epr(B)$.
If $p=n$, then the fact that
$\ell_2 = \tt A$ implies that $M = I_{n}$, 
which is a contradiction 
(because $B \neq I_{n}$ and $B \neq 2I_n$).
Thus,
$\bigl\lceil \frac{n}{2} \bigr\rceil \leq p \leq n-1$.
As $\ell_1 = \tt A$, 
each of the last $n-p$ diagonal entries of $M$ is 
equal to $2$.
Then, as $\ell_2 = \tt A$,
\[
M=
\left(
\begin{array}{c|c}
I_p     & F \\
\hline
F^{\top} & 2I_{n-p}
\end{array}
\right),
\]
for some matrix $F$.
Note that $F=M[[p], [n]\setminus [p]]$.
Let $M=[m_{ij}]$.
Observe that, for 
$r,s \in [p]$ with $r \neq s$ and 
$t \in [n] \setminus [p]$,
$\det(M[r,s,t]) = 2 - m_{rt}^2-m_{st}^2$;
then, as $\ell_{3} = \tt A$,
either $m_{rt}$ or $m_{st}$ is zero.
Thus, each column of $F$ contains 
at most one nonzero entry.
If $F$ has only one column, then the desired conclusion follows.
Thus, we assume that 
$F$ has at least two columns, implying that
$n-p \geq 2$.
Now observe that, for 
$r \in [p]$ and $s,t \in [n] \setminus [p]$ with $s\neq t$,
$\det(M[r,s,t]) = 2(2 - m_{rs}^2-m_{rt}^2)$;
then, as $\ell_{3} = \tt A$,
either $m_{rs}$ or $m_{rt}$ is zero.
Thus, $F$ contains 
at most one nonzero entry in each row.
\epf

With Lemma \ref{AAA... lemma} at our disposal, 
we are ready to demonstrate that, 
if a given epr-sequence over $\F_3$ starts with 
$\tt AAA$, then the sequence is $\tt AAA \OL{A}$.

\begin{thm}\label{AAA...}
Let $B  \in \Ftnn$ be symmetric.
Suppose that $\epr(B)={\tt AAA}\ell_4\ell_5 \cdots \ell_n$.
Then $\epr(B)=\tt AAA \OL{A}$.
\end{thm}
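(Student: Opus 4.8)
The plan is to use Lemma \ref{AAA... lemma} to reduce $B$ to an explicit block form and then show directly that \emph{every} principal submatrix is nonsingular, which is exactly the assertion $\epr(B)=\tt AAA\OL{A}$ (every $\ell_j$ being $\tt A$ means every principal minor of every order is nonzero). First I would dispose of the two exceptional cases excluded by the lemma: if $B=I_n$ or $B=2I_n$, then every principal submatrix is $I_j$ or $2I_j$, whose determinant is $1$ or $2^j$, hence nonzero in $\F_3$, so $\epr(B)=\tt A\OL{A}$ and we are done. So assume $B\neq I_n$ and $B\neq 2I_n$. By Observation \ref{constant and permutation}, scaling by a nonzero constant and permutation similarity leave the epr-sequence unchanged, so by Lemma \ref{AAA... lemma} we may assume
$$B=\left(\begin{array}{c|c} I_p & F \\ \hline F^{\top} & 2I_{n-p}\end{array}\right),$$
where $F$ has at most one nonzero entry in each row and in each column.

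The key step is to show $\det(B[\gamma])\neq 0$ for every $\gamma\subseteq[n]$. Writing $\gamma_1=\gamma\cap[p]$ and $\gamma_2=\gamma\cap([n]\setminus[p])$ and setting $\widetilde F=F[\gamma_1,\gamma_2]$, the submatrix $B[\gamma]$ is the block matrix with diagonal blocks $I_{|\gamma_1|}$ and $2I_{|\gamma_2|}$ and off-diagonal block $\widetilde F$. If $\gamma_1=\emptyset$ then $B[\gamma]=2I_{|\gamma_2|}$ is nonsingular, so assume $\gamma_1\neq\emptyset$. Since the leading block $I_{|\gamma_1|}$ is invertible, the Schur-complement determinant identity gives $\det(B[\gamma])=\det\!\big(2I_{|\gamma_2|}-\widetilde F^{\top}\widetilde F\big)$.

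The crucial observation is that $\widetilde F^{\top}\widetilde F$ is diagonal: because each row of $\widetilde F$ inherits the ``at most one nonzero entry'' property, a product $\widetilde f_{ij}\widetilde f_{ik}$ with $j\neq k$ is always zero, so the off-diagonal entry $(\widetilde F^{\top}\widetilde F)_{jk}=\sum_i \widetilde f_{ij}\widetilde f_{ik}$ vanishes for $j\neq k$; and each diagonal entry equals $\sum_i \widetilde f_{ij}^2\in\{0,1\}$, since each column of $\widetilde F$ has at most one nonzero entry and a nonzero square in $\F_3$ equals $1$ (Observation \ref{square}). Hence $2I_{|\gamma_2|}-\widetilde F^{\top}\widetilde F$ is diagonal with every diagonal entry in $\{2,1\}$, both of which are nonzero over $\F_3$, so $\det(B[\gamma])\neq 0$. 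As $\gamma$ was arbitrary, every principal minor of $B$ of every order is nonzero, whence $\ell_j=\tt A$ for all $j$ and $\epr(B)=\tt AAA\OL{A}$.

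I expect the only delicate point to be verifying that $\widetilde F^{\top}\widetilde F$ is diagonal with $\{0,1\}$-entries; everything hinges on the ``at most one nonzero entry per row and per column'' structure handed to us by Lemma \ref{AAA... lemma}, after which the Schur-complement reduction turns the problem into the inspection of a diagonal matrix whose entries all lie in $\{1,2\}\subseteq\F_3\setminus\{0\}$.
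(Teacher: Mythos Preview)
Your proof is correct and follows essentially the same route as the paper: after disposing of $B=I_n$ and $B=2I_n$, you reduce via Lemma~\ref{AAA... lemma} to the explicit block form, take an arbitrary principal submatrix, and use the Schur complement of the $I$-block to reduce nonsingularity to the diagonality of $\widetilde F^{\top}\widetilde F$ with entries in $\{0,1\}$, whence $2I-\widetilde F^{\top}\widetilde F$ has diagonal entries in $\{1,2\}$. This is exactly the paper's argument, with only cosmetic differences in how the degenerate cases ($\gamma_1=\emptyset$ or $\gamma_2=\emptyset$) are phrased.
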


\bpf
As the desired conclusion follows immediately if 
$B=I_n$ or $B=2I_n$, assume that $B \neq I_n$ and $B \neq 2I_n$.
Because of Lemma \ref{AAA... lemma},
and because 
any two permutationally similar matrices have
the same epr-sequence,
and because the epr-sequence of a matrix is preserved after multiplication by a nonzero constant
(see Observation \ref{constant and permutation}),
we may assume, without loss of generality, that  
\[ 
B = 
\left(
\begin{array}{c|c}
I_p     & F \\
\hline
F^{\top} & 2I_{n-p}
\end{array}
\right),
\]
where 
$\bigl\lceil \frac{n}{2} \bigr\rceil \leq p \leq n-1$ and 
$F$ is a matrix with at most one nonzero entry in 
each row and each column.

As there is nothing to prove if $n=3$, 
we assume that $n \geq 4$.
Let $k$ be an integer with $4 \leq k \leq n$ and 
$C$ be a $k \times k$ principal submatrix of $B$.
It suffices to show that $C$ is nonsingular.
Observe that either 
$C=I_k$ or $C=2I_k$ or
\[ 
C = 
\left(
\begin{array}{c|c}
I_q     & G \\
\hline
G^{\top} & 2I_{k-q}
\end{array}
\right),
\]
where $1 \leq q \leq k-1$  and 
$G$ is a matrix with at most one nonzero entry in 
each row and each column.
Observe that
$G^{\top}G$ is a diagonal matrix all of 
whose diagonal entries are either $0$ or $1$.
As $C[[q]] = I_{q}$ is nonsingular,
the Schur Complement Theorem implies that
$\det(C) = \det(C[[q]]) \det(C/C[[q]])$,
where $C/C[[q]]$ is the 
Schur complement of $C[[q]]$ in $C$.
Thus, $\det(C) = \det(C/C[[q]])$,
implying that it suffices to show that 
$C/C[[q]]$ is nonsingular.
Observe that
\[
C/C[[q]] =
2I_{k-q} - G^{\top}(C[[q]])^{-1}G =
2I_{k-q} - G^{\top}(I_q)^{-1}G =
2I_{k-q} - G^{\top}G,
\]
which is a diagonal matrix all of whose diagonal entries are either $1$ or $2$,
implying that $C/C[[q]]$ is nonsingular, as desired.
\epf


We need another lemma.

\begin{lem}\label{NAXA...}
Let $B \in \Ftnn$ be symmetric.
Suppose that 
$\epr(B) = {\tt NA}\ell_3\ell_4\cdots \ell_n$.
If $\ell_4 = \tt A$, then 
$n \leq 6$ and $\epr(B)$ is one of the following sequences:
\[
{\tt NAAA}, \quad  \quad  
{\tt NAAAN}, \quad  \quad 
{\tt NAAANA}.
\]
\end{lem}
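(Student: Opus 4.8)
The plan is to reduce $B$ to a normal form, translate the hypothesis $\ell_4={\tt A}$ into a statement about $2$-colourings of a complete graph, and then let Ramsey theory and Theorem~\ref{triangle-free} finish the job. Since $\F_3$ has characteristic $\neq 2$, Observation~\ref{NA... and ell_3} gives $\ell_3={\tt A}$, so $\epr(B)$ begins ${\tt NAAA}$. From $\ell_1={\tt N}$ and $\ell_2={\tt A}$, every diagonal entry of $B$ is $0$ and every off-diagonal entry is nonzero, hence lies in $\{1,2\}=\{1,-1\}$. Applying Observation~\ref{ones in first row} with $D=\diag(1,b_{12}^{-1},\dots,b_{1n}^{-1})$, I may assume without loss of generality that every entry in the first row and column equals $1$, while the diagonal stays $0$ and the remaining off-diagonal entries stay in $\{1,-1\}$. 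I then view the off-diagonal entries as a $2$-colouring of the edges of $K_n$, say $+1=$ red and $-1=$ blue.

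The engine is a determinant identity for $4\times4$ hollow symmetric matrices: for distinct $i,j,k,l$, writing $x=b_{ij}b_{kl}$, $y=b_{ik}b_{jl}$, $z=b_{il}b_{jk}$ for the three ``matching products,'' a direct expansion gives $\det(B[\{i,j,k,l\}])=x^{2}+y^{2}+z^{2}-2(xy+yz+zx)$. Over $\F_3$ each square is $1$, so this collapses to $xy+yz+zx$, which vanishes exactly when $x=y=z$. In particular, for a $4$-set containing vertex $1$ (all of whose incident entries are $1$) the minor is $0$ iff the triangle on the other three vertices is monochromatic. Hence $\ell_4={\tt A}$ forces the colouring of $K_{n-1}$ on $\{2,\dots,n\}$ to have no monochromatic triangle; since $R(3,3)=6$ this is impossible once $n-1\geq 6$, giving $n\leq 6$ (this bound may also be read off directly from Corollary~\ref{NA... over F3 with Ramsey}$(i)$).

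It remains to determine the sequence for $n\in\{4,5,6\}$. For $n=4$ there is nothing more to do: $\epr(B)={\tt NAAA}$. For $n=6$, the red graph on $\{2,\dots,6\}$ and its complement are both triangle-free, so Theorem~\ref{triangle-free} forces the red edges to form a $C_5$; as vertex $1$ is joined to all five cycle vertices in red, $B$ is determined up to the epr-preserving operations of Observations~\ref{constant and permutation} and \ref{ones in first row} and coincides with $M_{\tt NAAANA}$ of Example~\ref{NAAANA example}, whence $\epr(B)={\tt NAAANA}$. For $n=5$ I enumerate the colourings of $K_4$ whose red graph $G$ and complement $\OL{G}$ are both triangle-free: up to relabelling and colour-swap (i.e.\ $B\mapsto -B$) these are $G=C_4$, $G=2K_2$ and $G=P_4$. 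The first two are complementary and, by the matching-product criterion applied to the single $4$-set $\{2,3,4,5\}$, yield a zero minor, contradicting $\ell_4={\tt A}$; so $G=P_4$, and the resulting $B$ is exactly the leading $5\times5$ principal submatrix of $M_{\tt NAAANA}$. Since $[\epr(M_{\tt NAAANA})]_5={\tt N}$, every order-$5$ principal minor of $M_{\tt NAAANA}$ vanishes, so $\det B=0$ and $\epr(B)={\tt NAAAN}$.

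The main obstacle is the $n=5$ analysis. One must correctly list the colourings of $K_4$ with triangle-free complement, and verify that the $C_4$ and $2K_2$ colourings are genuinely excluded by the remaining $4$-set $\{2,3,4,5\}$; this is precisely where the full matching-product criterion, rather than just the absence of monochromatic triangles, is essential, since those colourings have no monochromatic triangle yet still produce a singular $4\times4$ principal submatrix. One then checks that the surviving $P_4$ colouring gives a unique matrix up to the equivalences preserving epr-sequences. By contrast, the $n=6$ step is forced cleanly by Theorem~\ref{triangle-free}, and the bound $n\leq 6$ is exactly the Ramsey input $R(3,3)=6$.
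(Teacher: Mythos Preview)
Your argument is correct, but it diverges from the paper's route in one essential place: the determination of $\ell_5$. The paper handles $n\ge 5$ by a short inverse trick---if $\ell_5\in\{\tt A,S\}$ then (via the Inheritance Theorem) some $5\times5$ principal submatrix $C$ has $\epr(C)={\tt NAAAA}$, whence $\epr(C^{-1})={\tt AAANA}$ by the Inverse Theorem, contradicting Theorem~\ref{AAA...}. Thus $\ell_5={\tt N}$ falls out without ever looking at the $n=5$ matrix explicitly. You instead derive the matching-product identity $\det(B[\{i,j,k,l\}])=xy+yz+zx$ over $\F_3$, use it to eliminate the $C_4$ and $2K_2$ colourings of $K_4$ via the single $4$-set $\{2,3,4,5\}$, and then read off $\ell_5={\tt N}$ from the explicit $P_4$ matrix (as a principal submatrix of $M_{\tt NAAANA}$). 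Both arguments reach the same conclusion; yours is more self-contained, in that it avoids the dependence on Theorem~\ref{AAA...} and the Inverse Theorem, at the cost of a short case analysis and the verification of the $4\times4$ determinant identity. For the bound $n\le 6$ and for the $n=6$ step you and the paper are essentially aligned (Ramsey input $R(3,3)=6$ plus Theorem~\ref{triangle-free}); the paper simply cites Corollary~\ref{NA... over F3 with Ramsey}$(i)$ for the bound rather than rederiving it.
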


\bpf
Suppose that $\ell_4 = \tt A$ and 
$\epr(B) = \ell_1\ell_2 \cdots \ell_n$.
Thus, $\ell_1\ell_2 = \tt NA$.
By Corollary \ref{NA... over F3 with Ramsey}, 
$n \leq 6$.
As {\tt NAN} and {\tt NAS} are forbidden,
$\ell_3 = \tt A$.
If $n=4$, then $\epr(B) = \tt NAAA$, as desired.
Assume that $n \geq 5$.
If $\ell_5 \in \{\tt A,S\}$, then, 
by the Inheritance Theorem,
$B$ has a $5 \times 5$ principal submatrix $C$ with
$\epr(C) = \tt NAAAA$, which implies that 
$\epr(C^{-1}) = \tt AAANA$ (see the Inverse Theorem),
contradicting Theorem \ref{AAA...}.
It follows, then, that $\ell_5=\tt N$.
If $n=5$, then $\epr(B) = \tt NAAAN$, as desired.
Assume that $n = 6$.

We have already shown that 
$\epr(B) = {\tt NAAAN}\ell_6$.
Thus, it remains to show that $\ell_6 = \tt A$;
that is, it remains to show that $B$ is nonsingular.
As $\ell_1\ell_2 = \tt NA$, 
each off-diagonal entry of $B$ is nonzero.
Without loss of generality, 
we may assume that each off-diagonal entry in 
the first row and first column of $ B$ is equal to $1$
(see Observation \ref{ones in first row}).
Let $M$ be the $5 \times 5$ matrix obtained from 
$B[\{2,3,4,5,6\}]$ by replacing any 
off-diagonal entries that are equal to $2$ with $0$.
Let $G$ be the graph such that $A_{\F_3}(G) = M$
(as each diagonal entry of $B$ is zero, 
such a graph exists).
We will now show that both
$G$ and $\OL{G}$ are triangle-free;
it suffices to show that 
neither $J_3-I_3$ nor $2(J_3-I_3)$ is a 
principal submatrix of $B[\{2,3,4,5,6\}]$.
If $B[\{i,j,k\}] = J_3-I_3$ or $B[\{i,j,k\}] = 2(J_3-I_3)$
for some $i,j,k \in \{2,3,4,5,6\}$, then 
$B[\{1,i,j,k\}]$ is singular, 
which contradicts the fact that $\ell_4 = \tt A$.
Thus, both $G$ and $\OL{G}$ are triangle-free.
As $G$ is of order $5$, 
Theorem \ref{triangle-free} implies that $G=C_5$.
Thus, $B[\{2,3,4,5,6\}]$ is permutationally similar to 
$A_{\F_3}(C_5)  + 2A_{\F_3}(\OL{C_5})$.
Then, as each entry in 
the first row and first column of $ B$ is equal to $1$,
and because the first diagonal entry of $B$ is zero,
$B$ is uniquely determined, up to permutation similarity;
moreover, $B$ is permutationally similar to the nonsingular matrix in Example \ref{NAAANA example}.
It follows that $B$ is nonsingular, as desired.
\epf

A symmetric matrix over $\F_3$ whose epr-sequence contains $\tt NAAA$ as a subsequence is a matrix of order at most $6$:

\begin{prop}\label{NAAA}
If $\tt NAAA$ occurs as a subsequence of 
the epr-sequence of a 
symmetric matrix $B$ over $\F_3$, then 
$n \leq 6$ and 
$\epr(B)$ is one of the following sequences:
\[
{\tt NAAA}, \quad  \quad  
{\tt NAAAN}, \quad  \quad 
{\tt NAAANA}.
\]
\end{prop}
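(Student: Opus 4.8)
The plan is to show that the pattern \texttt{NAAA}, wherever it appears, is in fact forced to occur at the very beginning of $\epr(B)$, so that the statement reduces immediately to Lemma \ref{NAXA...}. Concretely, suppose $\epr(B) = \ell_1\ell_2\cdots\ell_n$ and that the subsequence sits at position $i$, i.e.\ $\ell_i = {\tt N}$ and $\ell_{i+1} = \ell_{i+2} = \ell_{i+3} = {\tt A}$ (so $n \geq i+3$). Once I prove $i = 1$, I will have $\epr(B) = {\tt NA}\ell_3\ell_4\cdots\ell_n$ with $\ell_4 = {\tt A}$, and Lemma \ref{NAXA...} delivers both $n \leq 6$ and the three admissible sequences \texttt{NAAA}, \texttt{NAAAN}, \texttt{NAAANA} directly.

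To force $i = 1$, the key move is to pass to a nonsingular principal submatrix and invert it. Since $\ell_{i+3} = {\tt A}$, every principal submatrix of order $i+3$ is nonsingular; fix any $\beta \subseteq [n]$ with $|\beta| = i+3$ and set $C = B[\beta]$. By the Inheritance Theorem (parts 1 and 2), $[\epr(C)]_i = {\tt N}$ and $[\epr(C)]_{i+1} = [\epr(C)]_{i+2} = {\tt A}$, while $C$ is nonsingular of order $i+3$, so its last entry $[\epr(C)]_{i+3} = {\tt A}$ (Theorem \ref{Principal rank}). Writing $\epr(C) = m_1 m_2 \cdots m_{i+2}{\tt A}$, the Inverse Theorem gives $\epr(C^{-1}) = m_{i+2}m_{i+1}m_i\cdots m_1{\tt A}$, so the first three terms of $\epr(C^{-1})$ are $m_{i+2}m_{i+1}m_i = {\tt AAN}$. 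Thus $\epr(C^{-1})$ begins with \texttt{AAN}.

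Now Corollary \ref{AAN...} applies to $C^{-1}$: a symmetric matrix over $\F_3$ whose epr-sequence starts with \texttt{AAN} has order at most $4$. But $C^{-1}$ has order $i+3 \geq 4$, so $i+3 \leq 4$, which forces $i = 1$, completing the reduction. I expect the only genuine subtlety to be choosing the right reduction: the more obvious route of killing the initial segment by a Schur complement would require a nonsingular principal submatrix of order $i-1$, and hence a case analysis on $\ell_{i-1}$ (ruling out $\ell_{i-1} = {\tt N}$ via the $\tt NN$ Theorem, $\ell_{i-1} = {\tt A}$ via Proposition \ref{ANA... over F3}, and the awkward $\ell_{i-1} = {\tt S}$ case). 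Inverting a nonsingular order-$(i+3)$ submatrix instead turns \texttt{NAAA} into a leading \texttt{AAN}, and the exceptionally strong order bound in Corollary \ref{AAN...} does all the work at once, bypassing that case analysis entirely.
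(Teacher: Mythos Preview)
Your proof is correct and follows essentially the same route as the paper: pass to a nonsingular principal submatrix of order $i+3$, use the Inheritance Theorem to keep the {\tt NAAA} tail, invert to get a leading {\tt AAN}, apply Corollary~\ref{AAN...} to force order $\le 4$, hence $i=1$, and then invoke Lemma~\ref{NAXA...}. The final paragraph comparing with a Schur-complement approach is commentary rather than part of the argument, but the core proof matches the paper's.
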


\bpf
Let $B$ be an $n \times n$ symmetric matrix over 
$\F_3$, and 
let $\epr(B) = \ell_1 \ell_2 \cdots \ell_n$.
Suppose that 
$\ell_k\ell_{k+1}\ell_{k+2}\ell_{k+3} = \tt NAAA$ 
for some $k \in [n-3]$.
Because of Lemma \ref{NAXA...}, 
it suffices to show that $k=1$.
Let $B'$ be a 
$(k+3)\times (k+3)$ principal submatrix of $B$, and 
let $\epr(B') = \ell'_1 \ell'_2 \cdots \ell'_{k+3}$.
By the Inheritance Theorem, 
$\epr(B') = \ell'_1\ell'_{2} \cdots \ell'_{k-1}\tt NAAA$.
By the Inverse Theorem, 
$\epr((B')^{-1}) = 
{\tt AAN}\ell'_{k-1}\cdots \ell'_{2}\ell'_{1}\tt A$.
It follows from Corollary \ref{AAN...} that 
$(B')^{-1}$ is of order at most $4$.
Thus, $k+3 \leq 4$, implying that $k=1$.
\epf

If the epr-sequence of a given symmetric matrix in $\F_3^{n \times n}$ starts with $\tt NAAN$, then
the sequence is completely determined by 
the value of $n$:

\begin{prop}\label{NAAN...}
Let $B \in \Ftnn$ be symmetric.
Suppose that 
$\epr(B) = {\tt NAAN}\ell_5\ell_6\cdots \ell_n$.
Then $\epr(B)$ is of one of the following forms:
\[
\tt NAAN\OL{AAN}, \quad \quad  
\tt NAAN\OL{AAN}A, \quad \quad
\tt NAAN\OL{AAN}AA.
\]
\end{prop}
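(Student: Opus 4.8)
The plan is to show that the hypothesis forces $B$ to be, up to a nonzero scalar multiple and a diagonal congruence, the matrix $J_n - I_n$, after which the conclusion is read off from Proposition~\ref{Jn-In and Jn-2In over F3}. First I would extract the combinatorial structure imposed by the prefix $\tt NAAN$. Since $[\epr(B)]_1 = \tt N$, every diagonal entry of $B$ is $0$; since $[\epr(B)]_2 = \tt A$ and $\det(B[\{i,j\}]) = -b_{ij}^2$, every off-diagonal entry is nonzero, hence by Observation~\ref{square} lies in $\{1,-1\}=\{1,2\}$. Thus $B$ is a symmetric matrix over $\F_3$ with zero diagonal and $\pm 1$ off-diagonal entries, i.e.\ a $\pm1$ signing of the edges of $K_n$.

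The crux is the condition $[\epr(B)]_4 = \tt N$. For a $4$-set $\{i,j,k,l\}$, a direct expansion of the determinant of a symmetric matrix with zero diagonal, reduced over $\F_3$ (using $3\equiv 0$ and $x^2=1$ for nonzero $x$), gives
\[
\det\!\big(B[\{i,j,k,l\}]\big) \equiv P_1 + P_2 + P_3 \pmod 3,
\]
where $P_1,P_2,P_3 \in \{1,-1\}$ are the products of the entries along the three Hamiltonian $4$-cycles on $\{i,j,k,l\}$. Since each edge of $K_4$ lies in exactly two of these three cycles, $P_1P_2P_3 = 1$, so an even number of the $P_m$ equal $-1$; hence the minor is $3\equiv 0$ when all $P_m=1$ and is $-1\equiv 2\neq 0$ otherwise. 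Therefore $[\epr(B)]_4 = \tt N$ is equivalent to every $4$-cycle of $K_n$ being balanced, i.e.\ having product $1$.

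Next I would convert this into a statement about triangles. Writing each $4$-cycle product as the product of the two triangles sharing its diagonal shows that balance of all $4$-cycles is equivalent to any two triangles sharing an edge having equal sign; as the triangles of $K_n$ (for $n\ge 4$) are connected under edge-sharing, all triangles share a common sign $\epsilon \in \{1,-1\}$. Replacing $B$ by $\epsilon B$, which preserves the epr-sequence by Observation~\ref{constant and permutation}, makes every triangle product equal to $1$ (since $\epsilon^3\epsilon = \epsilon^4 = 1$). Normalizing the first row and column to all $1$'s by a diagonal congruence $D(\cdot)D$ as in Observation~\ref{ones in first row} (which again preserves the epr-sequence) then forces each off-diagonal entry indexed by $j,k\ge 2$ to equal the triangle product through vertex $1$, namely $1$. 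Hence the normalized matrix is exactly $J_n-I_n$, so $\epr(B)=\epr(J_n-I_n)$, and Proposition~\ref{Jn-In and Jn-2In over F3} identifies this sequence as $\tt NAAN\OL{AAN}$, $\tt NAAN\OL{AAN}A$, or $\tt NAAN\OL{AAN}AA$ according to $n\bmod 3$.

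The main obstacle is the determinant computation and its reduction modulo $3$: one must verify that the only surviving contribution is the symmetric sum of the three $4$-cycle products, and that the parity constraint $P_1P_2P_3=1$ makes vanishing of the minor equivalent to all $P_m=1$. The subsequent combinatorial propagation (from balanced $4$-cycles to a single global triangle sign) and the final rescaling-and-normalization to reach $J_n-I_n$ are routine given the observations already established.
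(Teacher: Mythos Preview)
Your argument is correct, and it reaches the same endpoint as the paper --- showing $\epr(B)=\epr(J_n-I_n)$ and then invoking Proposition~\ref{Jn-In and Jn-2In over F3} --- but by a genuinely different route.

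The paper normalizes the first row and column to $1$'s \emph{first} (via Observation~\ref{ones in first row}) and then disposes of the remaining block $B[\{2,\dots,n\}]$ with a single explicit minor: if that block were not monochromatic, some row would contain both a $1$ and a $2$, say $b_{23}=1$ and $b_{24}=2$, and then one computes $\det(B[\{1,2,3,4\}])=1+b_{34}^{2}=2\neq 0$, contradicting $\ell_4=\tt N$. The two monochromatic cases ($J_{n-1}-I_{n-1}$ or $2(J_{n-1}-I_{n-1})$) then both yield $\epr(J_n-I_n)$ after an obvious row/column scaling.

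Your approach instead reads $\ell_4=\tt N$ structurally: the order-$4$ minor reduces over $\F_3$ to $P_1+P_2+P_3$ with $P_1P_2P_3=1$, so vanishing forces every $4$-cycle to be balanced; balance of $4$-cycles propagates to a single global triangle sign, and a scalar-plus-diagonal normalization then pins $B$ down as $J_n-I_n$. This is the signed-graph/switching-class viewpoint, and it makes transparent \emph{why} the matrix is rigid. The paper's proof is shorter and purely computational; yours is longer but more conceptual, and the balance-to-switching step would transfer to other settings where off-diagonal entries are forced into $\{\pm1\}$.
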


\bpf
Because of Proposition \ref{Jn-In and Jn-2In over F3}, 
it suffices to show that $\epr(B) = \epr(J_n-I_n)$.
As $[\epr(B)]_1^2 = \tt NA$, 
each off-diagonal entry of $B$ is nonzero.
Because of Observation \ref{ones in first row},
we may assume that each off-diagonal entry in 
the first row and first column of $B$ is equal to $1$.
We claim that either
$B[\{2,3,\dots,n\}] = J_{n-1}-I_{n-1}$ or
$B[\{2,3,\dots,n\}] = 2(J_{n-1}-I_{n-1})$.
Suppose that this claim is false. 
As each off-diagonal entry of $B$ is nonzero, 
and because each diagonal entry of $B$ is zero,
and because $B$ is symmetric,
$B[\{2,3,\dots,n\}]$ has a row containing both a 
$1$ and a $2$ as off-diagonal entries.
Suppose that $B=[b_{ij}]$.
As any two permutationally similar matrices have 
the same epr-sequence
(see Observation \ref{constant and permutation}), 
we may assume, without loss of generality, that 
$b_{23} = 1$ and $b_{24} = 2$.
Observe that 
$\det(B[\{1,2,3,4\}]) = 1+(b_{34})^2$.
As $b_{34} \neq 0$, $(b_{34})^2 = 1$,
implying that $\det(B[\{1,2,3,4\}]) \neq 0$,
which contradicts the fact that $[\epr(B)]_4 = \tt N$. 
Thus, our claim is true.
We proceed by considering the 
two possible cases based on $B[\{2,3,\dots,n\}]$.

\noindent
\textbf{Case 1}: 
$B[\{2,3,\dots,n\}] = J_{n-1}-I_{n-1}$.

\noindent
As each off-diagonal entry in the 
first row and first column of $B$ is equal to $1$,
and because $b_{11} = 0$, $B=J_n-I_n$, 
implying that $\epr(B)=\epr(J_n-I_n)$, as desired.

\noindent
\textbf{Case 2}: 
$B[\{2,3,\dots,n\}] = 2(J_{n-1}-I_{n-1})$.

\noindent
Let $M$ be the matrix that results from 
multiplying the first row and first column of $B$ by $2$.
Then, as multiplication of any row or column
of a matrix by a nonzero constant leaves the rank of
every submatrix invariant, $\epr(B) = \epr(M)$.
As $M=2(J_n-I_n)$, 
and because $\epr(M)=\epr(J_n-I_n)$
(see Observation \ref{constant and permutation}),
$\epr(B)=\epr(J_n-I_n)$, as desired.
\epf

The next example exhibits two matrices of which we shall make use in the proof of Theorem \ref{Main theorem}.

\begin{ex}\label{Matrix examples for {A,N} theorem}
\normalfont
For the following symmetric matrices over $\F_3$, 
$\epr(M_{\sigma}) = \sigma$:
\[
M_{\tt AANA} = 
\left(
\begin{array}{cccc}
 1 & 0 & 1 & 1 \\
 0 & 1 & 1 & 1 \\
 1 & 1 & 2 & 0 \\
 1 & 1 & 0 & 2 \\
\end{array}
\right)
\quad \text{ and } \quad
M_{\tt AANN} = 
\left(
\begin{array}{cccc}
 1 & 0 & 1 & 1 \\
 0 & 1 & 1 & 2 \\
 1 & 1 & 2 & 0 \\
 1 & 2 & 0 & 2 \\
\end{array}
\right).
\]
\end{ex}

We are finally ready to establish the main result of this section.

\begin{thm}\label{Main theorem}
Let $n \geq 3$ and $\sigma = \ell_1\ell_2\cdots \ell_n$ be a sequence from $\{\tt A,N\}$.
Then there exists a symmetric matrix in $\Ftnn$ whose  epr-sequence is $\sigma$ 
if and only if 
$\sigma$ is of one of the following forms:
 \begin{multicols}{4}
    \begin{itemize}
\item[1.] $\tt AAA\OL{A}$.
\item[2.] $\tt AAN$.
\item[3.] $\tt AANA$.
\item[4.] $\tt AANN$.
\item[5.] $\tt ANA\OL{ANA}$.
\item[6.] $\tt ANA\OL{ANA}A$.
\item[7.] $\tt ANA\OL{ANA}AN$.
\item[8.] $\tt ANN\OL{N}$.
\item[9.] $\tt NAA$.
\item[10.] $\tt NAAA$.
\item[11.] $\tt NAAAN$.
\item[12.] $\tt NAAANA$.
\item[13.] $\tt NAAN\OL{AAN}$. 
\item[14.] $\tt NAAN\OL{AAN}A$. 
\item[15.] $\tt NAAN\OL{AAN}AA$.
\item[16.] $\tt NNN\OL{N}$.
 \end{itemize}
    \end{multicols}
\end{thm}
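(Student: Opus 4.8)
The plan is to prove the two directions separately, relying almost entirely on the structural results already assembled in Sections \ref{s:prelim} and \ref{s:F_3}, so that the whole argument reduces to (i) a short decision tree on the first few terms of $\sigma$ for necessity, and (ii) exhibiting one realizing matrix for each of the sixteen forms for sufficiency. The realizing matrices I would use are drawn almost exclusively from the families $J_n - I_n$, $J_n - 2I_n$, together with $I_n$, $O_n$, $J_n$, and the two worked examples $M_{\tt AANA}$, $M_{\tt AANN}$ and $M_{\tt NAAANA}$.

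For necessity, let $B \in \Ftnn$ be symmetric with $\epr(B) = \sigma = \ell_1\ell_2\cdots\ell_n \in \{\tt A, N\}^n$ and $n \geq 3$, and branch on $\ell_1$. If $\ell_1 = \tt N$ and $\ell_2 = \tt N$, then the $\tt NN$ Theorem (Theorem \ref{NN result}) forces $\sigma = \tt NNN\OL{N}$ (form 16); if $\ell_1 = \tt N$ and $\ell_2 = \tt A$, then Observation \ref{NA... and ell_3} (characteristic $3 \neq 2$) gives $\ell_3 = \tt A$, so $\sigma$ begins $\tt NAA$, and then $n = 3$ is form 9, the choice $\ell_4 = \tt A$ produces the subsequence $\tt NAAA$ and hence forms 10--12 via Proposition \ref{NAAA}, while $\ell_4 = \tt N$ produces the prefix $\tt NAAN$ and hence forms 13--15 via Proposition \ref{NAAN...}. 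If instead $\ell_1 = \tt A$ and $\ell_2 = \tt A$, then $\ell_3 = \tt A$ gives form 1 by Theorem \ref{AAA...} and $\ell_3 = \tt N$ gives forms 2--4 by Corollary \ref{AAN...}; and if $\ell_1 = \tt A$, $\ell_2 = \tt N$, then $\ell_3 = \tt A$ yields the prefix $\tt ANA$ and forms 5--7 by Proposition \ref{ANA... over F3}, whereas $\ell_3 = \tt N$ gives $\ell_2\ell_3 = \tt NN$, so the $\tt NN$ Theorem forces form 8. This tree is exhaustive and each leaf lands in exactly one listed form.

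For sufficiency, I would verify that each form is attained. Forms 1 and 16 are realized by $I_n$ and $O_n$, and form 8 by $J_n$, whose rank-one structure gives $\tt ANN\OL{N}$. Forms 5--7 are precisely $\epr(J_n - 2I_n)$ and forms 9 and 13--15 are precisely $\epr(J_n - I_n)$, taken over the three residues of $n$ modulo $3$ by Proposition \ref{Jn-In and Jn-2In over F3} (with the degenerate case $n = 3$ of the family $J_n - I_n$ giving form 9). Forms 3 and 4 are the matrices $M_{\tt AANA}$ and $M_{\tt AANN}$ of Example \ref{Matrix examples for {A,N} theorem}, and form 2 is realized by a $3 \times 3$ principal submatrix of $M_{\tt AANA}$. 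Finally form 12 is $M_{\tt NAAANA}$ of Example \ref{NAAANA example}, and forms 10 and 11 come from its $4 \times 4$ and $5 \times 5$ principal submatrices: by the Inheritance Theorem (Theorem \ref{Inheritance}) every such submatrix inherits $\tt N$ in position $1$ and $\tt A$ in positions $2,3,4$, while the vanishing of \emph{all} order-$5$ principal minors of $M_{\tt NAAANA}$ forces $\tt N$ in position $5$ of any $5 \times 5$ principal submatrix, giving $\tt NAAA$ and $\tt NAAAN$ with no further determinant computation.

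The genuinely hard part is not this final assembly but the supporting structural results it invokes --- especially Theorem \ref{AAA...} and Propositions \ref{NAAA} and \ref{NAAN...}, which pin down exactly what can follow each admissible prefix. Granting those, the remaining difficulty is purely bookkeeping: confirming that the families $J_n - I_n$ and $J_n - 2I_n$, together with the two examples and their submatrices, cover all sixteen forms across every residue class modulo $3$ and every small $n \geq 3$, and that the necessity tree leaves no prefix unaccounted for. I would pay particular attention to the boundary values $n = 3,4,5,6$, where several forms collapse to their shortest instances (such as $\tt NAA$, $\tt ANA$, and $\tt NAAN$), to ensure each such sequence is assigned to one and only one form.
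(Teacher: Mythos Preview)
Your proposal is correct and follows essentially the same two-part structure as the paper's proof: the same decision tree on the first few letters (invoking Theorem~\ref{AAA...}, Corollary~\ref{AAN...}, Propositions~\ref{ANA... over F3}, \ref{NAAA}, \ref{NAAN...}, and the $\tt NN$ Theorem) for necessity, and the same realizing families $I_n$, $O_n$, $J_n$, $J_n-I_n$, $J_n-2I_n$ together with the worked examples for sufficiency. The only cosmetic difference is your realizer for form~10 ($\tt NAAA$): you take a $4\times 4$ principal submatrix of $M_{\tt NAAANA}$ via Inheritance, whereas the paper uses $(M_{\tt AANA})^{-1}$ via the Inverse Theorem---both are valid and equally direct.
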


\bpf
Suppose that there exists a 
symmetric matrix $B \in \Ftnn$ with $\epr(B) = \sigma$.
By the {\tt NN} Theorem, 
$\ell_1\ell_2\ell_3 \neq \tt NNA$.
Then, as $\tt NAN$ is forbidden,
$\ell_1\ell_2\ell_3 \in 
\{\tt AAA,AAN, ANA, ANN, NAA, NNN\}$.
If $\ell_1\ell_2\ell_3 = \tt AAA$, 
then $\sigma = \tt AAA\OL{A}$ 
(see Theorem \ref{AAA...}).
If $\ell_1\ell_2\ell_3 = \tt AAN$,
Corollary \ref{AAN...} implies that either
$\sigma = \tt AAN$ or
$\sigma = \tt AANA$ or 
$\sigma = \tt AANN$.
If $\ell_1\ell_2\ell_3 = \tt ANA$, 
then, by Proposition \ref{ANA... over F3}, either
$\sigma = \tt ANA\OL{ANA}$ or
$\sigma = \tt ANA\OL{ANA}A$ or
$\sigma = \tt ANA\OL{ANA}AN$.
If $\ell_1\ell_2\ell_3 = \tt ANN$ or
$\ell_1\ell_2\ell_3 = \tt NNN$, 
then the {\tt NN} Theorem implies that either 
$\sigma = \tt ANN\OL{N}$ or
$\sigma = \tt NNN\OL{N}$.

Finally, we consider the remaining case:
$\ell_1\ell_2\ell_3 = \tt NAA$.
If $n=3$, then $\sigma = \tt NAA$. 
Suppose that $n \geq 4$.
We proceed by considering two cases, based on $\ell_4$.

\noindent
\textbf{Case 1}: $\ell_4 = \tt A$.

\noindent
It follows that
$\sigma = {\tt NAAA}\ell_5\ell_6\cdots \ell_n$ 
and, therefore, by Proposition \ref{NAAA}, either
$\sigma = \tt NAAA$ or
$\sigma = \tt NAAAN$ or
$\sigma = \tt NAAANA$.

\noindent
\textbf{Case 2}: $\ell_4 = \tt N$.

\noindent
It follows from Proposition \ref{NAAN...} that 
either
$\sigma = \tt NAAN\OL{AAN}$ or
$\sigma = \tt NAAN\OL{AAN}A$ or
$\sigma = \tt NAAN\OL{AAN}AA$.

We will now establish the other direction.
If $\sigma$ is of the form $\tt AAA\OL{A}$, 
then $\sigma = \epr(I_n)$.
If $\sigma = \tt AAN$, then, 
by the Inheritance Theorem, 
$\sigma$ is the epr-sequence of any of the $3 \times 3$ principal submatrices of the matrix $M_{\tt AANA}$ in 
Example \ref{Matrix examples for {A,N} theorem}.
If $\sigma = \tt AANA$ or $\sigma = \tt AANN$,
then $\sigma$ is the epr-sequence of one of the matrices in Example \ref{Matrix examples for {A,N} theorem}.
If $\sigma$ is of one of the following forms, then
$\sigma = \epr(J_n-2I_n)$:
$\tt ANA\OL{ANA}$,
$\tt ANA\OL{ANA}A$, and
$\tt ANA\OL{ANA}AN$
(see Proposition \ref{Jn-In and Jn-2In over F3}).
If $\sigma$ is of the form $\tt ANN\OL{N}$, 
then $\sigma = \epr(J_n)$.
If $\sigma=\tt NAA$, then $\sigma = \epr(J_3-I_3)$.
If $\sigma=\tt NAAA$, 
then $\sigma = \epr((M_{\tt AANA})^{-1})$
(see the Inverse Theorem).
If $\sigma=\tt NAAAN$, 
then, by the Inheritance Theorem, 
$\sigma$ is the epr-sequence of any of 
the $5 \times 5$ principal submatrices of 
the matrix $M_{\tt NAAANA}$ in 
Example \ref{NAAANA example};
and, if $\sigma=\tt NAAANA$, 
then $\sigma = \epr(M_{\tt NAAANA})$.
If $\sigma$ is of one of the following forms, then
$\sigma = \epr(J_n-I_n)$:
$\tt NAAN\OL{AAN}$,
$\tt NAAN\OL{AAN}A$ and 
$\tt NAAN\OL{AAN}AA$
(see Proposition \ref{Jn-In and Jn-2In over F3}).
If $\sigma$ is of the form $\tt NNN\OL{N}$, 
then $\sigma = \epr(O_n)$.
\epf

\subsection*{Acknowledgments}
$\null$
\indent
The research of Peter Dukes is supported by the 
NSERC grant RGPIN-312595--2017.
The research of Xavier Mart\'{i}nez-Rivera was 
supported by the following NSERC grants:
RGPIN-3677-2016 and RGPIN-312595-2017.



\begin{thebibliography}{99}




\bibitem{BIRS13}
W. Barrett, S. Butler, M. Catral, S. M. Fallat,
H. T. Hall, L. Hogben, P. van den Driessche, M. Young.
The principal rank characteristic sequence over various fields.
\textit{Linear Algebra and its Applications}
\textbf{459} (2014), 222--236.


\bibitem{P}
R. A. Brualdi, L. Deaett, D. D. Olesky, P. van den Driessche.
The principal rank characteristic sequence of a real symmetric matrix.
\textit{Linear Algebra and its Applications}
\textbf{436} (2012), 2137--2155.


\bibitem{Brualdi & Schneider}
R. A. Brualdi, H. Schneider.
Determinantal identities: Gauss, Schur, Cauchy, Sylvester, Kronecker, Jacobi, Binet, Laplace, Muir, and Cayley.
\textit{Linear Algebra and its Applications}
\textbf{52/53} (1983), 769--791.


\bibitem{EPR}
S. Butler, M. Catral, S. M. Fallat, H. T. Hall, L. Hogben, P. van den Driessche, M. Young.
The enhanced principal rank characteristic sequence.
\textit{Linear Algebra and its Applications}
\textbf{498} (2016), 181--200.











\bibitem{Ramsey-book}
R. L. Graham, B. L. Rothschild, J. H. Spencer. 
\textit{Ramsey theory}. 
Wiley Series in Discrete Mathematics and Optimization. John Wiley \& Sons, Inc., Hoboken, NJ, 2013.



\bibitem{HS}
O. Holtz, H. Schneider.
Open problems on GKK $\tau$-matrices.
\textit{Linear Algebra and its Applications}
\textbf{345} (2002), 263--267.






\bibitem{XMR-Classif}
X. Mart\'{i}nez-Rivera.
Classification of families of pr- and epr-sequences.
\textit{Linear and Multilinear Algebra}
\textbf{65} (2017), 1581--1599.




\bibitem{XMR-Char 2}
X. Mart\'{i}nez-Rivera.
The enhanced principal rank characteristic sequence over a field of characteristic $2$.
\textit{Electronic Journal of Linear Algebra}
\textbf{32} (2017), 273--290.







\bibitem{Ramsey-survey}
S. Radziszowski. Small Ramsey numbers.
\textit{Electronic Journal of Combinatorics} DS1, 2017.


\bibitem{FZhang-Schur}
F. Zhang (editor).
\textit{The Schur Complement and its Applications.}
Springer-Verlag, New York, New York, 2005.
\end{thebibliography}
\end{document}